\documentclass{article}

\usepackage{inputenc}
\usepackage{amsmath, dsfont}
\usepackage{times}
\usepackage[margin = 1in]{geometry}
\usepackage{color}
\usepackage{comment}
\usepackage{enumitem}
\usepackage{verbatim}
\usepackage{algorithmic}

\usepackage[colorlinks,urlcolor=blue,citecolor=blue,linkcolor=blue]{hyperref}

\usepackage{natbib}
\usepackage{amssymb, amsmath, amsthm}
\usepackage{amsfonts,mathtools,mathrsfs,mathtools,color}
\usepackage{graphicx}
\usepackage{caption}
\usepackage{subcaption}
\usepackage{bbm} 
\usepackage{algorithm} 
\usepackage{algorithmic} 
\usepackage{comment}
\usepackage{xcolor}
\usepackage{url}
\usepackage{mdframed}
\usepackage{soul}
\usepackage{blindtext}
\usepackage{titlesec}
\usepackage{multirow}
\usepackage{wrapfig}

\newtheorem{theorem}{Theorem}

\newtheorem*{theorem*}{Theorem}
\newtheorem{lemma}{Lemma}
\newtheorem{corollary}{Corollary}

\newtheorem{assumption}{Assumption}

\newcommand{\bx}{\mathbf{x}} 
\newcommand{\by}{\mathbf{y}} 
\newcommand{\bu}{\mathbf{u}} 
\newcommand{\bv}{\mathbf{v}} 
\newcommand{\bg}{\mathbf{g}}

\newcommand{\bc}{\mathbf{c}} 
\newcommand{\bm}{\mathbf{m}}

\newcommand{\event}{\mathfrak{E}}

\newcommand{\E}{\mathbb{E}}

\title{Lions and Muons: Optimization via Stochastic Frank-Wolfe under Heavy-Tailed Noise}
\author{Maria-Eleni Sfyraki\footnote{
            University of California San Diego, 
            \texttt{msfyraki@ucsd.edu}}
\and Jun-Kun Wang\footnote{
            University of California San Diego, 
            \texttt{jkw005@ucsd.edu}}
            }
\date{}

\begin{document}
\maketitle

\begingroup
\renewcommand{\thefootnote}{}
\footnotetext{Accepted at the Forty-Third International Conference on Machine Learning (ICML) 2026. The preprint was previously circulated under the title ``Lions and Muons: Optimization via Stochastic Frank-Wolfe". The title has been updated to match the peer-reviewed version.}
\addtocounter{footnote}{-1}
\endgroup

\begin{abstract}
Stochastic Frank-Wolfe is a classical optimization method for solving 
constrained optimization problems. On the other hand, recent optimizers such as Lion and Muon have gained quite significant popularity in deep learning. In this work, building on recent initiatives, we provide a unifying perspective by interpreting these seemingly disparate methods through the lens of Stochastic Frank-Wolfe.
Specifically, we show that Lion and Muon with weight decay can be viewed as special instances of a Stochastic Frank-Wolfe, and we establish their convergence guarantees in terms of the Frank-Wolfe gap, a standard stationarity measure in non-convex optimization for Frank-Wolfe methods.
We further find that convergence to this gap implies convergence to a KKT point of the original problem under a norm constraint for Lion and Muon. Moreover, motivated by recent empirical findings that stochastic gradients in modern machine learning tasks often exhibit heavy-tailed distributions, we extend Stochastic Frank-Wolfe to settings with heavy-tailed noise by developing two robust variants with strong theoretical guarantees that hold for general compact convex sets without the need for a large batch size, filling the gap in the literature on Stochastic Frank-Wolfe for non-convex optimization. Our contributions in the later part of this work, in turn, yield new variants of Lion and Muon, that better accommodate heavy-tailed gradient noise, thereby enhancing their practical scope.
\end{abstract}

\section{Introduction}
\label{sec:intro}

Frank-Wolfe (FW) methods (i.e., Conditional Gradient Methods) are classical algorithms in optimization and machine learning \citep{Frank1956AnAF,pmlr-v28-jaggi13}. Over the past decade, there has been sustained interest in extending and analyzing FW variants in a broad range of settings, including refined algorithms and convergence analysis \citep{lan2016conditional,lacoste2016convergence,li2021heavy,lu2021generalized,braun2022conditional,chen2024last,MP25}, acceleration under certain constraint sets for minimizing functions with certain properties \citep{garber2015faster,abernethy2018faster,kerdreux2021affine,garber2023linear,garber2025linearlyconvergentfrankwolfetypemethod}, projection-free algorithms for online learning \citep{hazan2016variance,wan2022online,garber2023projection}, connections to submodular maximization \citep{mokhtari2020stochastic}, a new interpretation as finding a game-theoretic equilibrium \citep{abernethy2017frank,wang2024no}, and 
stochastic optimization \citep{hazan2016variance,reddi2016stochastic,yurtsever2019conditional,shen2019complexities,zhang2020one,negiar2020stochastic,hassani2020stochasticconditionalgradient,beznosikov2023sarah,nazykov2024stochastic},
among others, to name just a few.
FW-type methods, which rely on a linear optimization oracle, 
can enjoy lower per-iteration cost compared to methods that require a projection for some constrained optimization problems. However, in deep learning, training is typically framed as an unconstrained minimization problem, where practitioners often favor recent optimizers such as AdamW \citep{loshchilov2017decoupled}, Lion \citep{chen2024symbolic}, and Muon \citep{jordan2024muon}.
It remains unclear to what extent FW methods are practically effective or competitive as optimizers for training neural networks, despite previous efforts to explore their application in training deep learning models (e.g., \citet{pokutta2020deep}).

In this work, we first show that Lion and Muon, two recent  state-of-the-art optimization methods in deep learning, are in fact specific instances of a stochastic FW. This unification is grounded in a sequence of recent insights that reveal how solving the linear optimization oracle under various norm constraints naturally gives rise to a few popular updates in training neural networks such as obtaining a signed stochastic gradient \citep{bernstein2018signsgdcompressedoptimisationnonconvex} and getting a preconditioned stochastic gradient 
(\citet{chen2023lion,xie2024implicitbiasadamwellinfty,bernstein2024oldoptimizernewnorm,p2025LMO}). Yet, we note that the similar observations can be traced back to an earlier initiative of investigating the linear optimization oracle in the FW literature, see e.g., \citet{pmlr-v28-jaggi13,combettes2021complexity} and the references therein.
We advance this perspective by establishing that Lion is exactly a stochastic FW under an $l_{\infty}$-norm constraint, while Muon with weight decay corresponds precisely to the \emph{same} stochastic FW under a spectral-norm constraint. We further provide a convergence rate guarantee of the proposed stochastic FW in terms of the Frank-Wolfe Gap under the standard smoothness and bounded variance assumption, and hence our result offers a theoretical support for Lion and Muon. We then discuss the interpretation of convergence guarantee for the Frank-Wolfe gap and show that the notion implies converging to a KKT point of the original optimization problem subject to a corresponding norm constraint, for Lion and Muon.

We then delve further into Stochastic FW under the scenario where the stochastic gradients satisfy a $p$-moment noise assumption, where $p \in (1,2]$. More precisely, we consider solving $\min_{\bx \in \mathcal{C}} F\left(\bx\right) := \E_{\xi} \left[ f\left(\bx, \xi \right) \right],$  
using an unbiased stochastic gradient oracle that returns $\nabla f\left(\bx, \xi \right)$ and satisfies the bounded $p$-th moment noise condition: $\E_{\xi}\left[ \| \nabla f \left(\bx, \xi \right) - \nabla F (\bx) \|^p \right] \leq \sigma^p$ for some $p \in (1,2].$  
The $p$-th moment noise assumption in stochastic optimization can be traced back to \citet{zhang2020one}, which is motivated from the observations that the finite-variance assumption (i.e., bounded second moment) may not hold when training modern machine learning models such as deep neural networks \citep{csimcsekli2019heavy,zhang2020adaptive}. 
Therefore, \citet{zhang2020adaptive} proposed considering the bounded $p$-th moment noise condition, where the case of $p < 2$ can capture heavy-tailed noise. They further provide a couple of concrete examples of distributions that has bounded $p$-th moment for some $p < 2$ but unbounded variance. 
Since then, there has been a flurry of research in stochastic optimization that provide improved theoretical analyses and algorithmic developments under this setting, e.g., \citet{cutkosky2019momentum,nguyen2023improved,kornilov2023accelerated,
liu2023breaking,sadiev2023high,liu2024nonconvex,puchkin2024breaking,
hubler2024gradient,kornilov2025sign}.
However, to the best of our knowledge, little is known about 
stochastic optimization methods that rely on a \emph{linear optimization oracle} under the bounded $p$-th moment noise condition. Hence, we explore this direction and propose two Stochastic FW type methods. 
The first method incorporates clipping the magnitude of the stochastic gradients and converges to the FW gap at a rate of  
$O\left( \log(T/\delta) T^{\frac{1-p}{3p-2}} \right)$,
with probability at least $1 - \delta$.  The second method integrates both the clipping technique and a variance reduction method and achieves an improved rate of  
$O\left( \log(T/\delta) T^{\frac{1-p}{2p-1}} \right)$,  
under the additional structural assumption that the stochastic gradients are Lipschitz.
This result parallels the aforementioned related works on SGD-type methods in terms of finding a point with $\epsilon$ gradient norm with high probability. Furthermore, by building upon the connection between FW, Lion, and Muon established in the first part of our contributions, our result leads to two variants of Lion and Muon with strong guarantees under the heavy-tailed noise.

Moreover, in the $p=2$ regime where the variance of the stochastic gradient is bounded, we also provide a convergence guarantee in terms of the expected FW gap for our proposed method. Specifically, we show that the total number of stochastic gradient evaluations is $O(1/\epsilon^3)$ to get an expected $\epsilon$ gap, which not only matches the best known complexity results in \citet{yurtsever2019conditional,hassani2020stochasticconditionalgradient,zhang2020one,nazykov2024stochastic}, but also simultaneously avoids the need for a gigantic average batch size across iterations \emph{and} relies additionally only on the smoothness and the averaged Lipshitz gradient assumption. To our knowledge, this is the first result in the literature of Stochastic FW that achieves both desiderata.

\section{Preliminaries}
\label{sec:preliminaries}

\subsection{Notations and assumptions}

As this work concerns optimization via Stochastic FW, we begin by recalling the definition of the Frank-Wolfe gap, i.e.,  
\begin{align}
\mathcal{G}(\bx) := \max_{\bv \in \mathcal{C}} \langle \bv - \bx , - \nabla F(\bx)\rangle.
\end{align}
One can show that $\mathcal{G(\bx)}=0$ if and only if $\bx$ is a stationary point \citep{lacoste2016convergence}.
Furthermore, when the function $F(\cdot)$ is convex, then the Frank-Wolfe Gap $\mathcal{G}(\bx)$ is an upper bound of the optimality gap $\Delta_x:=F(\bx) - \min_{\bx \in \mathcal{C}} F(\bx)$, i.e., 
$\Delta_x \leq \mathcal{G}(\bx)$, which can be obtained naturally as a by-product of solving the linear optimization problem in FW.

In this paper, we focus on a non-convex stochastic optimization problem of the form: 
$    \min_{\bx \in \mathcal{C}} F\left(\bx\right) := \E_{\xi} \left[ f\left(\bx, \xi \right) \right]$, 
where the objective $F: \mathcal{H} \to \mathbb{R}$ is a differentiable function defined over a Hilbert space $\mathcal{H}$, and $\xi$ is an independent random variable that represents the randomness of the stochastic gradient (e.g. the randomness of the mini-batch sampling).
We let $\| \mathbf{A} \|_2$ and $\| \mathbf{A} \|_{tr}$ represent the spectral and nuclear norm, respectively, when $\mathbf{A}$ is a matrix. Recall that for a norm $\|\cdot\|$, its \emph{dual norm} $\|\cdot\|_*$ is defined as
$\| \by \|_* := \sup_{\| \bx \| \leq 1} \langle \by, \bx \rangle.$
For example, the dual norm of an $\ell_p$ norm is an $\ell_q$ norm, where $\frac{1}{p}+\frac{1}{q}=1, p,q \in [1,\infty]$.
We further let $\mathcal{O}_{m \times n} := \{ \mathbf{A} \in \mathbb{R}^{m \times n} : \mathbf{A}^\top \mathbf{A} = \mathbf{I}_n \; \text{ or } \;  \mathbf{A} \mathbf{A}^\top = \mathbf{I}_m \}$ denote the set of semi-orthogonal matrices. For brevity, we let $ a \vee b:=\max(a,b)$ and $a \wedge b:=\min(a,b)$ denote the maximum and minimum operators, respectively.

In all the assumptions that follow, we clarify that the norms involved are Hilbert-space norms. We assume that the constraint set $\mathcal{C}\subseteq \mathcal{H}$ is a convex and compact set with diameter $D$, i.e. $\| \bx - \by \| \leq D$, for all $\bx,\by \in \mathcal{C}$. Throughout, we also assume that $F$ is bounded below, i.e. $\inf_{\bx \in \mathcal{C}}F(\bx)> - \infty$.  We will use the following assumptions in this work. However, not all of our results rely on all of the following assumptions. For each theoretical result in this work, we will explicitly state which assumptions in the following are invoked.

\begin{assumption} \label{assume:L}
(L-smoothness of $F(\cdot)$) The function $F(\cdot)$ is $L$-smooth, i.e. for all $\bx, \by \in \mathcal{C}$, we have
 $   F\left( \by \right) \leq  F\left( \bx \right) + \langle \nabla F(\bx), \by - \bx \rangle + \frac{L}{2} \| \by - \bx \|^2$.
\end{assumption}

\begin{assumption} \label{assume:L2}
(Averaged L-Lipschitz gradient of $f(\cdot, \xi)$) The function $f(\cdot, \xi)$ has averaged $L$-Lipschitz gradient, i.e. for all $\bx, \by \in \mathcal{C}$ we have
 $   \E_{\xi} \left[ \left\| \nabla f\left( \bx, \xi \right) - \nabla f\left( \by, \xi \right) \right\|^2 \right] \leq L^2 \left\| \bx - \by \right\|^2.$
\end{assumption}

\begin{assumption} \label{assume:LG}
$\mathbf{(a)}$ (L-Lipschitz gradient of $F(\cdot)$)
$\| \nabla F\left( \bx \right) - \nabla F\left( \by \right) \| \leq L \| \by - \bx \| $, $\forall \bx, \by \in \mathcal{C}$.
$\mathbf{(b)}$ (L-Lipschitz gradient of $f(\cdot, \xi)$) 
$ \left\| \nabla f\left( \bx, \xi \right) - \nabla f\left( \by, \xi \right) \right\| \leq L \| \bx - \by \|$, $\forall \bx, \by \in \mathcal{C}$, w.p. 1.
\end{assumption}

We note that Assumption \ref{assume:LG}-a implies Assumption \ref{assume:L}.

\begin{assumption} \label{assume:v}
(Bounded $p_{\mathrm{th}}$ moment noise)
The stochastic gradient $\nabla f \left(\cdot, \xi \right)$ is an unbiased estimate of the true gradient $\nabla F (\cdot)$, i.e. $\E_{\xi} \left[ \nabla f \left(\bx, \xi \right)\right] = \nabla F (\bx)$, $\forall \bx \in \mathcal{C}$. Furthermore, for some finite $\sigma \geq 0$, we have that
 $   \E_{\xi}\left[ \| \nabla f \left(\bx, \xi \right) - \nabla F (\bx) \|^p \right] \leq \sigma^p, \forall \bx \in \mathcal{C}$,
where $p \in (1,2]$.
\end{assumption}
It is noted that when $p=2$, i.e., $\E_{\xi}\left[ \| \nabla f \left(\bx, \xi \right) - \nabla F (\bx) \|^2 \right] \leq \sigma^2$, 
Assumption~\ref{assume:v} becomes the assumption that the variance of the stochastic gradients is bounded, which is a common assumption in stochastic optimization \citep{fang2018spider,tran2019hybrid,cutkosky2019momentum,levy2021storm+,li2021page}.

\begin{assumption} \label{assume:G}
(Bounded gradient norm over the set $\mathcal{C}$) 
For all $\bx \in \mathcal{C}$, we have $\| \nabla F(\bx) \| \leq G$.
\end{assumption}

\subsection{Lion and Muon}

{\setlength{\textfloatsep}{0pt}
\begin{figure}[b]
\vspace*{-4mm}
  \centering
\begin{minipage}{0.48\textwidth}
  \begin{algorithm}[H]
    \caption{Lion \citep{chen2024symbolic} } \label{alg:Lion}
    \begin{algorithmic} 
    \small
\STATE \textbf{Required:} Momentum parameters $\beta_1, \beta_2$, step size $\{\eta_t\}$, weight decay parameter $\lambda$.
\STATE \textbf{Initialize:} $\bm_0 = 0$ and $\bx_1 \in \mathcal{C}$.
\FOR{$t=1,2,\dots$}
\STATE Sample $\Xi_t \sim \mathcal{D}$. 
\STATE Compute $\bar{\bg}_t = \nabla f(\bx_t; \Xi_t )
$. 
\STATE Update $\bc_{t} = \beta_1 \bm_{t-1} + (1-\beta_1) \bar{\bg}_{t} $.
\STATE Update $\bx_{t+1}  = \bx_t - \eta_t \left( \text{sign}(\bc_t) + \lambda \bx_t \right) $.
\STATE Update $\bm_t = \beta_2 \bm_{t-1} + (1-\beta_2) \bar{\bg}_{t}$.
\ENDFOR 
    \end{algorithmic}
  \end{algorithm}
\end{minipage}%
\hfill
\begin{minipage}{0.48\textwidth}
  \begin{algorithm}[H]
\caption{Muon \citep{jordan2024muon}} \label{alg:Muon}
    \begin{algorithmic} 
    \small 
\STATE \textbf{Required:} Momentum parameter $\mu$, step size $\{\eta_t\}$, weight decay parameter $\lambda$. 
\STATE \textbf{Initialize:} $\mathbf{B}_0 = 0$ and $\mathbf{X}_1 \in \mathcal{C}$.
\FOR{$t=1,2,\dots$}
\STATE Sample $\Xi_t \sim \mathcal{D}$.
\STATE Compute $\mathbf{G}_{t} = \nabla f(\mathbf{X}_t; \Xi_t ) \in \mathbb{R}^{m \times n}
$. 
\STATE Update $\mathbf{B}_t = \mu \mathbf{B}_{t-1} + \mathbf{G}_{t}$.
\STATE Update $\mathbf{O}_t = \arg\min_{\mathbf{A} \in \mathcal{O}_{m \times n}}\|\mathbf{A} - \mathbf{B}_t\|_F$
\STATE Update $\mathbf{X}_{t+1} = \mathbf{X}_t - \eta_t \left( \mathbf{O}_t + \lambda \mathbf{X}_t \right)$.
\ENDFOR 
\end{algorithmic}
\end{algorithm}
\end{minipage}
\end{figure}
}

The Lion optimizer, proposed by \citet{chen2024symbolic}, 
 is a recent algorithm discovered via program search that has been experimentally shown to have superior generalization properties on various tasks compared to AdamW, which in turns has drawn further research attention in improving the algorithm and theoretical foundation \citep{chen2023lion,liu2024communicationefficientdistributedtraining,dong2024convergence,kosson2024rotationalequilibriumweightdecay,kosson2024analyzingreducingneed,liang2024memoryefficientllmtrainingonline,liang2025cautiousoptimizersimprovingtraining,zhao2024deconstructing}.
Notably, \citet{chen2023lion} introduce a general family of Lion-$\mathcal{K}$ algorithms by developing a Lyapunov function for the optimization dynamics and show that Lion solves an $\ell_{\infty}$-norm constrained optimization problem. 
\citet{kosson2024rotationalequilibriumweightdecay} analyze the influence of the interplay between weight decay and learning rate in the update dynamics of various optimizers, including Lion. A subsequent work by the same authors proposes two Lion variants that scale the update size and adjust the hyperparameter choices to be more comparable to those of AdamW \citep{kosson2024analyzingreducingneed}. \citet{yuan2025marsunleashingpowervariance} develop a framework by incorporating variance reduction into adaptive gradient methods, with one variant based on Lion. While their setting offers interesting insights, in contrast to this work, it does not incorporate the weight decay term, and convergence guarantees for the Lion variant are not included. They further assume the use of positive-definite preconditioners, a condition relying on the algorithmic iterates.

Muon \citep{jordan2024muon}, on the other hand, is a preconditioned gradient method. The design of Muon was motivated from improving  Shampoo \citep{gupta2018shampoopreconditionedstochastictensor}, which 
aims to maintain necessary second-order information while being efficient in optimization over tensor spaces. Shampoo regained prominence after winning the external tuning track at the
2024 AlgoPerf: Training Algorithms competition (\citet{dahl2023benchmarkingneuralnetworktraining,vyas2024soap,morwani2024new}). 
Building on this, \citet{jordan2024muon} propose Muon, an update rule that can be interpreted as a variant of Shampoo without the use of preconditioner accumulators. They further introduce a more efficient variant of Muon by incorporating an additional Nesterov momentum step. \citet{bernstein2024oldoptimizernewnorm} demonstrate that Muon can also be viewed as a steepest descent method under the spectral norm and provide a more efficient Newton-Schulz iteration scheme to perform approximate SVD. A recent work by \citet{p2025LMO} showcased Muon without the additional Nesterov-momentum step as an instance of one of their proposed methods labeled Unconstrained Stochastic Conditional Gradient Method, 
with a guarantee on the expected gradient norm,
and also recovered Muon with weight decay from a constrained variant, with a guarantee on the expected Frank-Wolfe gap. Their work further establishes a connection between the linear minimization oracle in Stochastic Frank-Wolfe and other norm-constrained optimizers, offering a valuable step toward unifying these approaches. While offering important contributions, their constrained variant does not incorporate the momentum extrapolation step, and as a result, does not recover Lion or the Nesterov-momentum variant of Muon. Since then, several works have analyzed the convergence properties of Muon in the absence of a weight-decay term \citep{li2025noteconvergencemuon,shen2025convergenceanalysismuon, an2025asgoadaptivestructuredgradient, kovalev2025understandinggradientorthogonalizationdeep, riabinin2025gluonmakingmuon}. Notably, \citet{chen2025muonoptimizesspectralnorm} 
extend this line of work by studying the convergence of Muon with weight decay within the Lion-$\mathcal{K}$ framework. Muon with weight decay has been shown experimentally to outperform vanilla Muon in the over-train regime \citep{liu2025muonscalablellmtraining}. We emphasize that, while convergence guarantees for Muon with weight decay can be recovered from the proposed Stochastic Frank-Wolfe in this paper, our results are established for the general Stochastic Frank-Wolfe and therefore extend to a broad class of algorithms. Moreover, new variants of Muon can be derived within our proposed methods, for which we also recover the corresponding convergence guarantees. Several recent variants of Muon have additionally been proposed, including \citet{ma2025swansgdnormalizationwhitening, liu2025cosmoshybridadaptiveoptimizer, lau2025polargradclassmatrixgradientoptimizers, huang2025limuonlightfastmuon, he2025lowrankorthogonalizationlargescalematrix, si2025adamuonadaptivemuonoptimizer, liu2025muonscalablellmtraining}.


\section{Lion and Muon as a Stochastic FW} 
\label{Section3}

\begin{wrapfigure}[13]{R}{0.4\textwidth}
\vspace*{-8mm}
\begin{minipage}{0.4\textwidth}
\begin{algorithm}[H]
\caption{A stochastic FW method} \label{alg:Lion_Muon_as_FW}
\begin{algorithmic}
\small
\STATE \textbf{Required:} Momentum parameters $\{\beta_{1,t}\}, \{\gamma_t\}$, step size $\{\eta_t\}$.
\STATE \textbf{Initialize:} $\bg_0 = 0$, and $\bx_1 \in \mathcal{C}$.
\FOR{$t=1,2,\dots$}
\STATE Sample $\Xi_t \sim \mathcal{D}$.
\STATE Compute $\bar{\bg}_t = \nabla f(\bx_t; \Xi_t )
$. 
\STATE Update $ \bg_t = \left(1- \gamma_t\right) \bg_{t-1} + \gamma_t \bar{\bg}_t$.
\STATE Set $\hat{\bg}_t = \frac{\beta_{1,t}}{(1-\gamma_t)} \bg_t + \left(1 -  \frac{\beta_{1,t}}{(1-\gamma_t)} \right) \bar{\bg}_t$.
\STATE Compute $\bu_t = \arg\min_{ \bv \in \mathcal{C}} \langle \bv , \hat{\bg}_t \rangle$.
\STATE Update $\bx_{t+1}  = (1-\eta_t) \bx_t + \eta_t \bu_t  $.
\ENDFOR 
\end{algorithmic}
\end{algorithm}
\end{minipage}
\end{wrapfigure}

The mechanism behind the Frank-Wolfe method and its variants in constrained optimization lies in their projection-free property. In particular, at each iteration, the algorithm solves a linear minimization problem of the form 
\begin{equation*}
    \arg\min_{\bv \in \mathcal{C}} \langle \bv, \bg \rangle,
\end{equation*}
for some input $\bg \in \mathcal{H}$, followed by a convex averaging step. Solving the linear optimization problem can be less expensive compared to the projection for some constraint sets $\mathcal{C}$.
Of particular interest is the special case where the constraint set is set to be a norm constraint of the form $\| \bv \| \leq \frac{1}{\lambda}$, for some norm $\| \cdot \|$ and sharpness $\lambda>0$. More specifically, for any $\bg \in \mathcal{H}$, it is easy to verify that
$    \arg\min_{ \| \bv \| \leq \frac{1}{\lambda}} \langle \bv, \bg \rangle = - \frac{ \partial\| \bg \|_{*}}{\lambda},$
where $ \partial\| \bg \|_{*}$ denotes the subdifferential of $\| \cdot \|_*$ at point $\bg$, defined as $ \partial\| \bg \|_{*} := \{ \bx \in \mathbb{R}^d : \| \bx \| = 1, \langle \bx, \bg \rangle = \| \bg \|_{*}\}$. Therefore, if $\bg$ is a gradient, $\bv = - \partial\| \bg \|_{*}/ \lambda$ can be viewed as a scaled steepest descent direction, i.e., the direction that minimizes the inner product with the gradient.

What we are going to show is that Lion is an instance of a variant of Stochastic FW (Algorithm~\ref{alg:Lion_Muon_as_FW}) under an $\ell_{\infty}$-norm ball constraint.

\begin{mdframed}[backgroundcolor=black!10,rightline=false,leftline=false,topline=false,bottomline=false]
\vspace{-2mm}
\begin{theorem}(Lion as Stochastic FW)
Lion (Algorithm~\ref{alg:Lion}) is an instance of a Stochastic Frank-Wolfe (Algorithm~\ref{alg:Lion_Muon_as_FW}) when using parameters $\beta_{1,t} = \beta_1$, $\gamma_t = 1 - \beta_2$, $\eta_t^{\mathrm{Alg}~\ref{alg:Lion_Muon_as_FW}} = \lambda \eta_t^{\mathrm{Alg}~\ref{alg:Lion}}$, for all $t$, setting $\mathcal{C} = \{\bv \in \mathbb{R}^n: \| \bv\|_{\infty} \leq \frac{1}{\lambda} \} $, and letting $\langle \cdot, \cdot \rangle$ denote the Euclidean inner product.
\label{th:Lion_as_SFW}
\end{theorem}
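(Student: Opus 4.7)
The plan is to verify a direct algebraic correspondence: under the parameter substitution in the theorem, each line of Algorithm~\ref{alg:Lion_Muon_as_FW} reduces to the corresponding line of Algorithm~\ref{alg:Lion}. Since both algorithms share the same stochastic gradient $\bar{\bg}_t = \nabla f(\bx_t;\Xi_t)$ and the same initialization, it suffices to check (i) the momentum buffer, (ii) the ``corrected'' direction $\hat{\bg}_t$, (iii) the linear minimization oracle over the $\ell_{\infty}$ ball, and (iv) the final convex-combination step.

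First, setting $\gamma_t = 1-\beta_2$ turns the recursion $\bg_t = (1-\gamma_t)\bg_{t-1} + \gamma_t \bar{\bg}_t$ into $\bg_t = \beta_2\bg_{t-1} + (1-\beta_2)\bar{\bg}_t$, which is exactly Lion's $\bm_t$. So by induction $\bg_t = \bm_t$ for all $t$. Next, I would plug this together with $\beta_{1,t}=\beta_1$ and $1-\gamma_t=\beta_2$ into the formula for $\hat{\bg}_t$ and simplify:
\begin{align*}
\hat{\bg}_t &= \tfrac{\beta_1}{\beta_2}\bigl(\beta_2 \bm_{t-1} + (1-\beta_2)\bar{\bg}_t\bigr) + \Bigl(1-\tfrac{\beta_1}{\beta_2}\Bigr)\bar{\bg}_t \\
&= \beta_1 \bm_{t-1} + \Bigl(\tfrac{\beta_1(1-\beta_2)}{\beta_2} + 1 - \tfrac{\beta_1}{\beta_2}\Bigr)\bar{\bg}_t = \beta_1 \bm_{t-1} + (1-\beta_1)\bar{\bg}_t,
\end{align*}
which is exactly $\bc_t$ in Lion. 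This is the one nontrivial algebraic step; the particular form of $\hat{\bg}_t$ in Algorithm~\ref{alg:Lion_Muon_as_FW} was chosen so that this cancellation happens.

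Third, I would evaluate the linear minimization oracle for $\mathcal{C} = \{\bv : \|\bv\|_\infty \leq 1/\lambda\}$. Since the $\ell_\infty$ ball is the Cartesian product of intervals $[-1/\lambda, 1/\lambda]$ and the Euclidean inner product decouples coordinatewise, the minimizer is $\bu_t = -\tfrac{1}{\lambda}\operatorname{sign}(\hat{\bg}_t) = -\tfrac{1}{\lambda}\operatorname{sign}(\bc_t)$ (choosing any element of the subdifferential at coordinates where $\bc_t$ vanishes). Finally, plugging this into the convex averaging step with $\eta_t^{\mathrm{Alg}~\ref{alg:Lion_Muon_as_FW}} = \lambda\, \eta_t^{\mathrm{Alg}~\ref{alg:Lion}}$ gives
\[
\bx_{t+1} = (1-\lambda \eta_t^{\mathrm{Alg}~\ref{alg:Lion}})\bx_t - \eta_t^{\mathrm{Alg}~\ref{alg:Lion}}\operatorname{sign}(\bc_t) = \bx_t - \eta_t^{\mathrm{Alg}~\ref{alg:Lion}}\bigl(\operatorname{sign}(\bc_t) + \lambda \bx_t\bigr),
\]
which matches Lion's update and completes the identification. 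An induction on $t$ (base case $\bm_0 = \bg_0 = 0$, $\bx_1$ the same in both) formalizes the argument. The main obstacle is really only the bookkeeping in the $\hat{\bg}_t$ simplification; everything else is either definitional or a one-line observation about the $\ell_\infty$ LMO.
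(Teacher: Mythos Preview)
Your proposal is correct and follows essentially the same approach as the paper: an induction on $t$ maintaining the invariants $\bg_t=\bm_t$, $\hat{\bg}_t=\bc_t$, and $\bx_t^{\mathrm{Alg}~\ref{alg:Lion_Muon_as_FW}}=\bx_t^{\mathrm{Alg}~\ref{alg:Lion}}$, with the key algebraic simplification of $\hat{\bg}_t$ and the identification of the $\ell_\infty$-ball LMO as $-\tfrac{1}{\lambda}\operatorname{sign}(\cdot)$. The paper's version organizes the induction slightly differently (showing $(\bx_{t+1},\bm_t)$ match $\Rightarrow$ $\bc_{t+1}$ matches), but the content is identical.
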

\end{mdframed}
\vspace{-2mm}

\begin{proof}
(sketch proof of Theorem~\ref{th:Lion_as_SFW}; full version is available in Appendix~\ref{app:Section3}).

We use proof by induction to show that, for all $t \geq 0$, the updates obtained by Algorithm \ref{alg:Lion} are equivalent to the updates of an instance of Algorithm \ref{alg:Lion_Muon_as_FW}. Let $\bx_t^{\mathrm{Alg}~\ref{alg:Lion}}$, $\eta_t^{\mathrm{Alg}~\ref{alg:Lion}}$, 
$\bar{\bg}_t^{\mathrm{Alg}~\ref{alg:Lion}}$,
and $\bx_t^{\mathrm{Alg}~\ref{alg:Lion_Muon_as_FW}}$, $\eta_t^{\mathrm{Alg}~\ref{alg:Lion_Muon_as_FW}}$,
and $\bar{\bg}_t^{\mathrm{Alg}~\ref{alg:Lion_Muon_as_FW}}$,
denote the iterates, step-sizes and stochastic gradients used at step $t$ of Algorithm \ref{alg:Lion} and Algorithm $\ref{alg:Lion_Muon_as_FW}$, respectively. 
Then, by setting $\beta_{1,t} \gets \beta_1$, $\gamma_t \gets 1-\beta_2$, $\eta_t^{\mathrm{Alg}~\ref{alg:Lion_Muon_as_FW}} \gets \lambda \eta_t^{\mathrm{Alg}~\ref{alg:Lion}}$, for all $t$ and letting $\mathcal{C} = \{\bv: \| \bv\|_{\infty} \leq \frac{1}{\lambda} \} $, we show that the following equations are maintained for all $t \geq 0$.
\begin{align}
     \bx_{t+1}^{\mathrm{Alg}~\ref{alg:Lion}} &= \bx_{t+1}^{\mathrm{Alg}~\ref{alg:Lion_Muon_as_FW}} \label{eq: x equivalency sketch} \\
     \bm_t &= \bg_t \label{eq: m equivalency sketch} \\
     \bc_{t+1} &= \hat{\bg}_{t+1} \label{eq: c equivalency sketch} 
\end{align}
Note that the objects on the left hand-side of the equalities correspond to Algorithm \ref{alg:Lion} and the objects on the right hand-side correspond to Algorithm \ref{alg:Lion_Muon_as_FW}. For $t=0$, we have that by initialization of the algorithms $\bm_0 = \bg_0 = 0$ and $\bx_{1}^{\mathrm{Alg}~\ref{alg:Lion}} = \bx_{1}^{\mathrm{Alg}~\ref{alg:Lion_Muon_as_FW}}$. Furthermore, we observe that we have (\ref{eq: x equivalency sketch}),(\ref{eq: m equivalency sketch}) $\Rightarrow$ (\ref{eq: c equivalency sketch}), for all $t \geq 0$. 
We show that (\ref{eq: x equivalency sketch}), (\ref{eq: m equivalency sketch}), and (\ref{eq: c equivalency sketch}) hold for all $t > 0$ using induction. Specifically, assume that they hold up to some $t \geq 0$. Then, we have
$\bx_{t+2}^{\mathrm{Alg}~\ref{alg:Lion}} = \bx_{t+2}^{\mathrm{Alg}~\ref{alg:Lion_Muon_as_FW}}$ and $\bm_{t+1}= \bg_{t+1}$ 
by recognizing that $\bu_{t+1}= - \frac{ \mathrm{sign}(\bc_{t+1}) }{\lambda}$.
Since this means that (\ref{eq: x equivalency sketch}) and (\ref{eq: m equivalency sketch}) hold for $t+1$, we obtain that (\ref{eq: c equivalency sketch}) also holds for $t+1$. This completes the proof.
 
We refer the reader to the full version of the proof for details.
\end{proof}

The Muon optimizer \citep{jordan2024muon} is designed for optimization problems where the model is structured as blocks of weight matrices (e.g., neural networks).
\citet{bernstein2024oldoptimizernewnorm} demonstrate that at each iteration, the algorithm performs steepest descent under the spectral norm ball. Similarly, \citet{p2025LMO} establish that at each iteration the Muon update step uses a linear minimization of the form
 $   \arg \min_{\| \mathbf{A}\|_2 \leq \frac{1}{\lambda}} \langle \mathbf{A} , \mathbf{G} \rangle $, 
for some $\lambda > 0$, where $\langle \cdot, \cdot \rangle$ denotes the Frobenius inner product and $\| \cdot \|_2$ denotes the spectral norm. 
We show that by adding a weight decay term to the Muon update, as shown in Algorithm \ref{alg:Muon}, the method can be produced from the same variant of Stochastic FW that yields Lion (i.e., Algorithm~\ref{alg:Lion_Muon_as_FW}) 

\begin{mdframed}[backgroundcolor=black!10,rightline=false,leftline=false,topline=false,bottomline=false]
\vspace{-2mm}
\begin{theorem}(Muon as Stochastic FW) 
Muon (Algorithm~\ref{alg:Muon}) is an instance of a Stochastic Frank-Wolfe (Algorithm~\ref{alg:Lion_Muon_as_FW}) when using the parameters $\beta_{1,t} = \mu$, $\gamma_t = 1-\mu$, $\eta_t^{\mathrm{Alg}~\ref{alg:Lion_Muon_as_FW}} = \lambda \eta_t^{\mathrm{Alg}~\ref{alg:Muon}}$, for all $t$, setting $\mathcal{C} = \{ \mathbf{A} \in \mathbb{R}^{m \times n} : \| \mathbf{A} \|_2 \leq \frac{1}{\lambda}\}$,  and letting $\langle \cdot, \cdot \rangle$ denote the Frobenius inner product.
\label{th: Muon_as_SFW}
\end{theorem}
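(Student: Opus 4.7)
The plan is to mirror the proof of Theorem~\ref{th:Lion_as_SFW} step by step: Muon and Lion share the same skeleton (momentum accumulation, a nonlinear map, a weight-decayed update), so the only substantive change is to replace the identification of $\mathrm{sign}(\cdot)$ as the $\ell_\infty$ linear minimization oracle with the analogous identification of the orthogonalization map as the spectral-norm LMO. Concretely, I would set up an induction showing that, under the stated parameter coupling, the iterate sequences agree exactly.

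First I would record the oracle identity that drives the whole proof, namely that for every $\mathbf{G}\in\mathbb{R}^{m\times n}$,
$$\arg\min_{\|\mathbf{A}\|_2\leq 1/\lambda}\langle \mathbf{A},\mathbf{G}\rangle \;=\; -\frac{1}{\lambda}\,\arg\min_{\mathbf{A}\in\mathcal{O}_{m\times n}}\|\mathbf{A}-\mathbf{G}\|_F.$$
This is the matrix analogue of the $-\mathrm{sign}(\bg)/\lambda$ identity used in the Lion proof and follows immediately from the SVD $\mathbf{G}=\mathbf{U}\mathbf{\Sigma}\mathbf{V}^\top$: the matrix $\mathbf{U}\mathbf{V}^\top$ simultaneously attains $\sup_{\|\mathbf{A}\|_2\leq 1}\langle \mathbf{A},\mathbf{G}\rangle=\|\mathbf{G}\|_*$ and minimizes $\|\mathbf{A}-\mathbf{G}\|_F$ over $\mathcal{O}_{m\times n}$, as already used in \citet{bernstein2024oldoptimizernewnorm,p2025LMO}.

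Next I would specialize Algorithm~\ref{alg:Lion_Muon_as_FW} to the matrix setting with $\beta_{1,t}=\mu$ and $\gamma_t=1-\mu$. A direct unrolling gives $\bg_t=(1-\mu)\sum_{s=1}^{t}\mu^{t-s}\bar{\bg}_s$ and $\mathbf{B}_t=\sum_{s=1}^{t}\mu^{t-s}\mathbf{G}_s$, so under the coupling $\bar{\bg}_s=\mathbf{G}_s$ we have $\bg_t=(1-\mu)\mathbf{B}_t$. With $\beta_{1,t}=\mu$ and $\gamma_t=1-\mu$, the ratio $\beta_{1,t}/(1-\gamma_t)$ equals $1$, so the convex-combination step in Algorithm~\ref{alg:Lion_Muon_as_FW} collapses to $\hat{\bg}_t=\bg_t$. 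The induction then maintains the three invariants $\mathbf{X}_t^{\mathrm{Alg}~\ref{alg:Muon}}=\bx_t^{\mathrm{Alg}~\ref{alg:Lion_Muon_as_FW}}$, $\mathbf{B}_t=\bg_t/(1-\mu)$, and $\mathbf{O}_t=-\lambda\,\bu_t$. Initialization is immediate from the zero buffers and matched starting points. For the step $t\to t+1$, the oracle identity applied to $\hat{\bg}_t=\bg_t$, together with the fact that positive rescaling of the argument does not alter the LMO's minimizer, gives $\bu_t=-\mathbf{O}_t/\lambda$. Substituting this and $\eta_t^{\mathrm{Alg}~\ref{alg:Lion_Muon_as_FW}}=\lambda\,\eta_t^{\mathrm{Alg}~\ref{alg:Muon}}$ into the convex averaging step yields
$$\bx_{t+1}=(1-\lambda\eta_t^{\mathrm{Alg}~\ref{alg:Muon}})\mathbf{X}_t+\lambda\eta_t^{\mathrm{Alg}~\ref{alg:Muon}}\bigl(-\mathbf{O}_t/\lambda\bigr)=\mathbf{X}_t-\eta_t^{\mathrm{Alg}~\ref{alg:Muon}}\bigl(\mathbf{O}_t+\lambda\mathbf{X}_t\bigr),$$
which is exactly the Muon update, while $\mathbf{B}_{t+1}=\mu\mathbf{B}_t+\mathbf{G}_{t+1}$ is preserved because $\bg_{t+1}/(1-\mu)=\mu\,\bg_t/(1-\mu)+\bar{\bg}_{t+1}$ under $\gamma_{t+1}=1-\mu$.

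The main obstacle is purely bookkeeping: Muon stores a raw Polyak-style sum $\mathbf{B}_t$ while Algorithm~\ref{alg:Lion_Muon_as_FW} uses the normalized exponential average $\bg_t$, and the spectral constraint is rescaled by $1/\lambda$. Both issues dissolve once one notices that (i) the LMO is invariant under positive scaling of its argument, so the factor $(1-\mu)$ between $\bg_t$ and $\mathbf{B}_t$ is invisible to the argmin, and (ii) the definition of $\hat{\bg}_t$ is engineered so that $\hat{\bg}_t=\bg_t$ precisely when $\beta_{1,t}=1-\gamma_t$, which is our parameter choice and removes any residual "debiasing" correction. With these two observations pinned down, the induction itself is mechanical.
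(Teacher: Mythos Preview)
Your proposal is correct and follows essentially the same route as the paper's proof: both set up an induction on the three invariants $\mathbf{X}_t=\bx_t$, $\mathbf{B}_t=\bg_t/(1-\mu)$, and $\mathbf{O}_t=-\lambda\bu_t$, both observe that the choice $\beta_{1,t}=\mu=1-\gamma_t$ collapses $\hat{\bg}_t$ to $\bg_t$, and both hinge on the identification of the semi-orthogonal Procrustes solution with the spectral-norm LMO (together with scale invariance of the argmin). Your explicit unrolling of $\bg_t$ and $\mathbf{B}_t$ as geometric sums is a nice expository touch not present in the paper, but the logical skeleton is the same.
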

\end{mdframed}
\vspace{-3mm}
\begin{proof}(sketch proof of Theorem~\ref{th: Muon_as_SFW}; full version is available in Appendix~\ref{app:Section3})

Given that $\mathbf{G}_t$ in Step 5 of Algorithm~\ref{alg:Muon} as a stochastic gradient matrix, with its SVD given by $\mathbf{G}_t = \mathbf{U} \mathbf{\Sigma} \mathbf{V}^\top$,  
we can use the fact that the minimizer of $\| \mathbf{A} - \mathbf{G} \|_F$ over semi-orthogonal matrices is the same as the maximizer of $\langle \mathbf{A} , \mathbf{G} \rangle$ over all matrices with their  spectral norm $\| \mathbf{A} \|_2 \leq 1$, i.e.,
\begin{align}
    \arg \min_{\mathbf{A} \in \mathcal{O}_{m \times n}}  \| \mathbf{A} - \mathbf{G} \|_F &=  \arg \max_{\| \mathbf{A}\|_2 \leq 1} \langle \mathbf{A} , \mathbf{G} \rangle,
\end{align}
which was also pointed out by \citet{bernstein2024oldoptimizernewnorm}.
The rest of the proof is similar to that of Theorem~\ref{th:Lion_as_SFW}.
\end{proof}

Notably, it can be shown that Muon with the additional Nesterov momentum step also falls within our Stochastic Frank-Wolfe. Due to space constraints, we defer the details to Appendix~\ref{app:Section3}.

Theorem~\ref{th: Lion_Muon_as_FW} below provides the convergence rate guarantee for Algorithm~\ref{alg:Lion_Muon_as_FW}, and its proof can be found in Appendix~\ref{app:Section3}.
We emphasize that the theorem considers the case where the user employs a batch size
$m_t$ of samples to construct a stochastic gradient estimate 
$\bar{\bg}_t = \nabla f(\bx_t; \Xi_t ) = \frac{1}{m_t} \sum_{i=1}^{m_t} \nabla f(\bx_t; \xi_{t,i})$ at each iteration $t$.
The batch size controls the variance of the stochastic gradient.
In particular, we have $\E \left [ 
\| \bar{\bg}_t - \nabla F (\bx) \|^2 \right ] \leq \frac{\sigma^2}{m_t}$, when $m_t$ samples are used to obtain $\bar{\bg}_t$.
\vspace{-1mm}
\begin{mdframed}[backgroundcolor=black!10,rightline=false,leftline=false,topline=false,bottomline=false]
\vspace{-3mm}
\begin{theorem}
Set $\eta_t = \frac{1}{D\sqrt{T}}$, $\gamma_t = \gamma$, $\beta_{1,t} = \beta$ and the batch size $m_t=m$, for all $t \geq 1$. Let $\bx_a$ be chosen uniformly at random from $\{ \bx_t \}_{t=1}^T$.
Then, under Assumptions \ref{assume:LG}-a and \ref{assume:v} with $p=2$ (i.e., bounded variance), Algorithm~\ref{alg:Lion_Muon_as_FW} satisfies 
\begin{align*}
\E[ \mathcal{G}(\bx_a) ] = O\left(\frac{ D \left( F(\bx_1) - F(\bx_*) + L(1/2+\beta/\gamma)  \right)}{T^{1/2}} 
      + \frac{D \sigma}{\sqrt{m}} \left( \frac{\beta}{\gamma (1-\gamma)} + 1\right)
  \right),
\end{align*}
for any $\beta \in [0, 1-\gamma]$ and $\gamma \in \left(0,1\right)$.
\label{th: Lion_Muon_as_FW}
\end{theorem}
\vspace{-0.5mm}
\end{mdframed}

Theorem~\ref{th: Lion_Muon_as_FW} establishes an upper bound of the expected Frank-Wolfe gap that holds for any batch size $m$.
If one sets $m_t=m=T$, then the following guarantee is implied by Theorem~\ref{th: Lion_Muon_as_FW}.

\begin{mdframed}[backgroundcolor=black!10,rightline=false,leftline=false,topline=false,bottomline=false]
\vspace{-2mm}
\begin{corollary} \label{cor:1}
In Theorem \ref{th: Lion_Muon_as_FW}, set $m_t = T$, for all $t \geq 1$. Then, Algorithm~\ref{alg:Lion_Muon_as_FW} satisfies
\begin{align*}
\E[ \mathcal{G}(\bx_a) ] = O\left(\frac{ D \left( F(\bx_1) - F(\bx_*) + L + \sigma \right)}{T^{1/2}}  \right).
\end{align*}
\end{corollary}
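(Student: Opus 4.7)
The plan is to obtain this corollary as a direct specialization of Theorem~\ref{th: Lion_Muon_as_FW}, since that theorem already supplies a bound parametrized by the batch size $m$ and by the free parameters $\beta, \gamma$. The work is simply to substitute $m = T$ and to fix $\beta, \gamma$ to constants independent of $T$ so that all coefficients collapse into absolute constants absorbed by the $O(\cdot)$ notation.

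First, I would set $m_t = m = T$ in the bound of Theorem~\ref{th: Lion_Muon_as_FW}. This replaces the noise term $\frac{D\sigma}{\sqrt{m}}(\frac{\beta}{\gamma(1-\gamma)} + 1)$ by $\frac{D\sigma}{\sqrt{T}}(\frac{\beta}{\gamma(1-\gamma)} + 1)$, so that both summands in the right-hand side now exhibit a common $T^{-1/2}$ rate. Since the batch size appears only through this second term, no other piece of the bound needs adjustment; note in particular that the total gradient-sample complexity used by the algorithm becomes $m_t \cdot T = T^2$, consistent with Corollary~\ref{cor:1}.

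Second, I would choose any admissible constants for $\beta$ and $\gamma$, for example $\gamma = \tfrac{1}{2}$ and $\beta = \tfrac{1}{4}$, which satisfies $\beta \in [0, 1-\gamma]$ and $\gamma \in (0,1)$. With this choice, $\tfrac{1}{2} + \beta/\gamma = 1$ and $\frac{\beta}{\gamma(1-\gamma)} + 1 = 2$, so both prefactors become $O(1)$ and can be absorbed. Combining the two $T^{-1/2}$ contributions then yields
\begin{equation*}
\E[\mathcal{G}(\bx_a)] = O\!\left(\frac{D\bigl(F(\bx_1) - F(\bx_*) + L + \sigma\bigr)}{T^{1/2}}\right),
\end{equation*}
which is precisely the claimed bound.

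There is no real obstacle in this argument: it is a direct corollary obtained by parameter substitution, with the only discretion being the choice of constant $\beta, \gamma$. The purpose of stating the corollary separately is to exhibit the standard $O(1/\epsilon^2)$ iteration complexity (and $O(1/\epsilon^3)$ total sample complexity) when the batch size is grown linearly with the horizon $T$, matching the remark in the introduction about the best known complexity guarantees in the stochastic FW literature.
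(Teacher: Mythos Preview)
Your proof of the corollary is correct and is exactly the intended argument: substitute $m = T$ into the bound of Theorem~\ref{th: Lion_Muon_as_FW} and fix $\beta, \gamma$ to absolute constants satisfying the admissibility constraints, which collapses all prefactors into the $O(\cdot)$.

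One correction to your closing commentary, though: the total SFO complexity is not $O(1/\epsilon^3)$. You correctly compute that the number of stochastic gradient evaluations is $m_t \cdot T = T^2$, and since the bound $\E[\mathcal{G}(\bx_a)] = O(T^{-1/2})$ requires $T = O(1/\epsilon^2)$ iterations to reach an $\epsilon$-gap, the total sample count is $T^2 = O(1/\epsilon^4)$, not $O(1/\epsilon^3)$. This is precisely the point the paper makes immediately after Corollary~\ref{cor:1}, and it is why the variance-reduced Algorithm~\ref{alg:SFW_VR} (Corollary~\ref{cor:2}) is introduced afterward --- to improve the $O(1/\epsilon^4)$ rate to the best-known $O(1/\epsilon^3)$. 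So your remark that this corollary ``matches the best known complexity guarantees'' is also off: it matches the \emph{standard} Stochastic FW rate of \citet{reddi2016stochastic}, not the improved one.
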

\end{mdframed}

We note that the Corollary~\ref{cor:1} suggests that the number of calls to stochastic first-order oracle (SFO) (i.e., stochastic gradient computations) for getting an expected $\epsilon$-gap is $O(1/\epsilon^4)$ in the worst case, which matches an existing rate of standard Stochastic FW in \citet{reddi2016stochastic} and the rate of a variant of Stochastic FW considered in \citet{p2025LMO} (see Lemma 5.6 in \citet{p2025LMO}).
While Corollary~\ref{cor:1} employs a batch size that grows with $T$, the same scaling is also necessary in the analysis of standard Stochastic Frank–Wolfe in \citet{reddi2016stochastic} to attain the $O(1/\epsilon^4)$ SFO complexity. 
Later, we propose three variants (Algorithms~\ref{alg:SFW_VR},~\ref{alg:SFW_Clipping}, and~\ref{alg:SFW_Clipping_VR}) that effectively eliminate the large batch size requirement. Among them, Algorithm~\ref{alg:SFW_Clipping} achieves with high probability the same SFO complexity as Corollary~\ref{cor:1} up to a logarithmic factor under the bounded-variance assumption, while using a batch size of one. Algorithm~\ref{alg:SFW_Clipping_VR} further improves the SFO complexity while also maintaining a batch size of one.

Next, we highlight the implication of the convergence of the Frank-Wolfe gap for Lion and Muon --- finding a KKT point
of the constrained optimization problem:
\begin{equation} \label{obj}
 \min_{\| \bx \| \leq \frac{1}{\lambda}} F(\bx),
 \end{equation}
where for Lion, the norm in \eqref{obj} is $\| \cdot \|_{\infty}$ (c.f., Theorem~\ref{th:Lion_as_SFW}), while for Muon, the norm is the matrix spectral norm $\| \cdot \|_2$ (c.f., Theorem~\ref{th: Muon_as_SFW}).
We recall that when a pair of primal variable $\bx_*$ and a dual variable $\mu_*$ satisfies the KKT conditions, it means that they satisfy (1) primal feasibility, i.e., $\| \bx_* \| \leq \frac{1}{\lambda}$, (2) dual feasibility, i.e., $\mu_* \geq 0$, (3) stationarity, i.e., $0 \in \nabla F(\bx_*) + \mu_* \partial \| \bx_* \|$, and (4) complementary slackness, i.e., $\mu_* ( \| \bx_* \| - \frac{1}{\lambda} )=0$. 
In this case, we say that $\bx_*$ is a KKT point, following the terminology of \citet{xie2024implicitbiasadamwellinfty}, who provide a precise equivalent characterization of a KKT point, stated as follows:

\begin{lemma}[Lemma 3.8 in \citet{xie2024implicitbiasadamwellinfty}]
\label{lem:xie}
$\bx$ is a KKT point of \eqref{obj}
if and only if $\| \bx \| \leq \frac{1}{\lambda}$ and $\langle -\lambda \bx, \nabla F(\bx) \rangle = \| \nabla F(\bx) \|_*$.
\end{lemma}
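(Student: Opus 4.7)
The plan is to prove both directions of the equivalence by unpacking the KKT system and repeatedly invoking two basic facts about norms: (i) the subdifferential characterization $\partial \|\bx\| = \{\bv : \|\bv\|_* \leq 1,\; \langle \bv, \bx\rangle = \|\bx\|\}$ for $\bx \neq 0$ and $\partial \|\bx\| = \{\bv : \|\bv\|_* \leq 1\}$ at the origin, and (ii) the duality pairing inequality $\langle \bv, \bx \rangle \leq \|\bv\|_* \|\bx\|$ together with its equality case. These reduce the task to some careful casework on whether the dual multiplier vanishes.

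For the forward direction, suppose $\bx$ is a KKT point with multiplier $\mu_* \geq 0$. Primal feasibility $\|\bx\| \leq 1/\lambda$ is immediate, so only the scalar identity needs to be verified. If $\mu_* = 0$, stationarity forces $\nabla F(\bx) = 0$ and both sides vanish. If $\mu_* > 0$, complementary slackness gives $\|\bx\| = 1/\lambda$ (hence $\bx \neq 0$), and stationarity $-\nabla F(\bx) \in \mu_* \partial \|\bx\|$ unpacks via (i) into the two facts $\|\nabla F(\bx)\|_* \leq \mu_*$ and $\langle -\nabla F(\bx), \bx\rangle = \mu_* \|\bx\| = \mu_*/\lambda$. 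The second rearranges to $\langle -\lambda \bx, \nabla F(\bx) \rangle = \mu_*$, and the duality inequality then gives $\mu_* = \langle -\lambda \bx, \nabla F(\bx)\rangle \leq \lambda \|\bx\| \|\nabla F(\bx)\|_* = \|\nabla F(\bx)\|_*$, which combined with the first fact forces $\mu_* = \|\nabla F(\bx)\|_*$, yielding the desired identity.

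For the reverse direction, I would construct the multiplier explicitly: set $\mu := \|\nabla F(\bx)\|_* \geq 0$, which gives dual feasibility for free. Primal feasibility is assumed. If $\mu = 0$, stationarity and complementary slackness hold trivially. If $\mu > 0$, apply the duality inequality to the hypothesis: $\|\nabla F(\bx)\|_* = \langle -\lambda \bx, \nabla F(\bx)\rangle \leq \lambda \|\bx\| \|\nabla F(\bx)\|_*$ forces $\|\bx\| \geq 1/\lambda$, and combined with primal feasibility we get $\|\bx\| = 1/\lambda$, delivering complementary slackness. For stationarity, it suffices to check that $-\nabla F(\bx)/\mu \in \partial \|\bx\|$, which decomposes into $\|-\nabla F(\bx)/\mu\|_* = 1$ (true by definition of $\mu$) and $\langle -\nabla F(\bx)/\mu, \bx\rangle = 1/\lambda = \|\bx\|$, which follows from dividing the hypothesis $\langle -\lambda \bx, \nabla F(\bx)\rangle = \mu$ by $\lambda \mu$.

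The main obstacle --- really the only subtle point --- is recognizing that the single scalar equation $\langle -\lambda \bx, \nabla F(\bx)\rangle = \|\nabla F(\bx)\|_*$ is precisely the equality case of the duality inequality for the vector $-\lambda \bx$, and that this single equality simultaneously encodes both complementary slackness (via forcing $\|\bx\| = 1/\lambda$ when $\mu > 0$) and the correct alignment between $\bx$ and $-\nabla F(\bx)$ required by the subdifferential condition. The degenerate cases $\bx = 0$ and $\nabla F(\bx) = 0$ must be handled separately so that dividing by $\mu$ or by $\|\bx\|$ is legitimate, but they are exactly the cases in which both conditions reduce to the trivial equality $0 = 0$.
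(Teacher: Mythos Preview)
Your proof is correct. Note, however, that the paper does not actually prove this lemma: it is quoted verbatim as Lemma~3.8 of \citet{xie2024implicitbiasadamwellinfty} and used as a black box, so there is no in-paper argument to compare against. Your argument is the standard direct verification of the KKT system via the subdifferential characterization of a norm and the duality pairing inequality, with the key observation that the single scalar identity $\langle -\lambda \bx, \nabla F(\bx)\rangle = \|\nabla F(\bx)\|_*$ simultaneously forces both complementary slackness (through the equality case of $\langle \bv, \bx\rangle \leq \|\bv\|_*\|\bx\|$) and the alignment condition in $\partial\|\bx\|$. The casework on $\mu_* = 0$ versus $\mu_* > 0$ is handled cleanly, and the degenerate cases are correctly dispatched.
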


\citet{xie2024implicitbiasadamwellinfty} show that 
if AdamW \citep{loshchilov2017decoupled} converges with non-increasing step sizes, then it must converge to a KKT point of \eqref{obj} under the $\ell_{\infty}$-norm constraint asymptotically.
On the other hand, 
we find that the Frank-Wolfe gap serves as a metric for measuring convergence to a KKT point. To be completely precise, 
when the constraint set $\mathcal{C}$ is a norm-ball constraint,
we have
\begin{align}
    \mathcal{G}(\bx) & = \max_{ \| \bv \| \leq \frac{1}{\lambda}  } \langle \bv - \bx , - \nabla F(\bx)\rangle \notag \\ &= \frac{1}{\lambda}\left\|  \nabla F(\bx) \right\|_* +  \langle \bx , \nabla F(\bx)\rangle.
\end{align}
The above expression together with Lemma~\ref{lem:xie}
shows that converging to a Frank-Wolfe gap is exactly equivalent to obtaining a KKT point.

In the context of Lion, we note that \citet{dong2024convergence} present convergence guarantees in terms of the quantity,
$\frac{1}{T} \sum_{t=1}^T \E\left[ \lambda \langle \nabla F(\bx_t), \bx_t \rangle + \| \nabla F(\bx_t) \|_1\right]$,
 which is exactly the expected Frank-Wolfe gap up to a multiplication constant $\lambda$, since it holds that
 $  \frac{1}{T} \sum_{t=1}^T \E\left[ \lambda \langle \nabla F(\bx_t), \bx_t \rangle + \| \nabla F(\bx_t) \|_1\right] 
    = \lambda \frac{1}{T} \sum_{t=1}^T \E\left[ \mathcal{G}(\bx_t)\right] = \lambda \E\left[ \mathcal{G}(\bx_a)\right].$
Their result shows an iteration complexity of $O\left( \frac{d^2}{\epsilon^4} \right)$ to converge to an $\epsilon$-gap, where $d$ is the dimension.
Similarly, in the case of Muon with weight decay, \citet{chen2025muonoptimizesspectralnorm} employ a KKT score function $\mathcal{S}(\mathbf{X})$, and derive their results using the quantity $\frac{1}{T} \sum_{t=1}^T \E\left[ \mathcal{S}(\mathbf{X}_t)\right]= \frac{1}{T} \sum_{t=1}^T \E\left[ \langle \lambda \mathbf{X}_t , \nabla F(\mathbf{X}_t) \rangle + \|\nabla F(\mathbf{X}_t) \|_{tr} \right]$, which is analogous to the Frank-Wolfe gap in the matrix setting. The provided iteration complexity matches the bound in Theorem~\ref{th: Lion_Muon_as_FW}.
In comparison with these works, the unifying perspective
of \emph{Lion and Muon as Stochastic FW} presented in our work yields a more general result, as the convergence result in Theorem~\ref{th: Lion_Muon_as_FW} applies to both Lion and Muon.

\section{A deeper investigation of Stochastic FW methods under the bounded moment noise} 
\label{Section4}

In this section, we conduct a deeper investigation of stochastic FW methods under Assumption~\ref{assume:v}, which characterizes the moment of the noise present in stochastic gradient estimates.

\subsection{Light-tailed regime}

\begin{wrapfigure}[14]{R}{0.44\textwidth}
\vspace*{-8mm}
    \begin{minipage}{0.44\textwidth}
\begin{algorithm}[H]
\caption{Stochastic FW with Variance Reduction} \label{alg:SFW_VR}
\begin{algorithmic}
\small
\STATE \textbf{Required:} Momentum parameters $\{\gamma_t\}$, step size $\{\eta_t\}$. 
\STATE \textbf{Initialize:} $\bg_0 = 0$, and $\bx_1 \in \mathcal{C}$.
\FOR{$t=1,2,\dots$}
\STATE Sample $\Xi_t \sim \mathcal{D}$.
\STATE Compute $\bar{\bg}_t = \nabla f(\bx_t; \Xi_t )
$. 
\STATE Update
$\bg_t = \left(1- \gamma_t\right) \bg_{t-1} + \gamma_t \bar{\bg}_t +  \qquad \qquad \text{ } \text{ } \text{ } \,\,\,
(1- \gamma_t) 
\left( \bar{\bg}_t 
-  
\mathbbm{1}_{t\geq 2}
\nabla f(\bx_{t-1}; \Xi_t )
 \right).$
\STATE Set $\hat{\bg}_t =  \frac{\beta_{1,t}}{(1-\gamma_t)} \bg_t + \left( 1 - 
\frac{\beta_{1,t}}{(1-\gamma_t)}
  \right) \bar{\bg}_t
   $.
\STATE Obtain $\bu_t = \arg\min_{ \bv \in \mathcal{C}} \langle \bv , \hat{\bg}_t \rangle$.
\STATE Update $\bx_{t+1}  = (1-\eta_t) \bx_t + \eta_t \bu_t  $.
\ENDFOR 
\end{algorithmic}
\end{algorithm}
  \end{minipage}
\end{wrapfigure}

Corollary~\ref{cor:1} in the previous section shows a required $O(1/\epsilon^4)$ calls to SFO to guarantee an expected $\epsilon$ gap. 
However, there has been extensive research in recent years aimed at improving the complexity of stochastic Frank-Wolfe methods. These efforts integrate variance reduction techniques under various assumptions to obtain an $O(1/\epsilon^3)$ complexity in terms of SFO calls, see e.g.,
\citet{yurtsever2019conditional,hassani2020stochasticconditionalgradient,zhang2020one,nazykov2024stochastic,beznosikov2023sarah,weber2022projection}. 
In Appendix~\ref{app:related_works}, we summarize the most relevant results in Table~\ref{table:sfw}, and provide a more detailed discussion.

Here, we propose Algorithm~\ref{alg:SFW_VR} and provide its convergence guarantees in Theorem~\ref{th: SFW_VR} and Corollary~\ref{cor:2}. The idea of the algorithmic design is to integrate a variance reduction technique, STORM \citep{cutkosky2019momentum}, into Algorithm~\ref{alg:Lion_Muon_as_FW}.

\begin{mdframed}
[backgroundcolor=black!10,rightline=false,leftline=false,topline=false,bottomline=false]
\vspace{-2mm}
\begin{theorem}
Set $\eta_t = \frac{1}{D T^{2/3} }$, $\gamma_t = \frac{1}{T^{2/3}}$, $\beta_{1,t} = 1 -  \frac{1}{T^{1/3}}$, for all $t \geq 1$, and the batch size $m_t = m$, for all $t > 1$.
Let $\bx_a$ be chosen uniformly at random from $\{ \bx_t \}_{t=1}^T$. 
Then, under Assumptions \ref{assume:L}, \ref{assume:L2}, and \ref{assume:v} with $p=2$ (i.e., bounded variance), Algorithm~\ref{alg:SFW_VR} satisfies 
\begin{align*}
\E[ \mathcal{G}(\bx_a) ] = O\left(\frac{ D \left(F(\bx_1) - F(\bx_*) + L^2 + \frac{\sigma^2}{m}\right) }{T^{1/3}} + \frac{D\sigma^2}{m_1} \right).
\end{align*}
\label{th: SFW_VR}
\end{theorem}
\vspace{-3mm}
\end{mdframed}

\begin{mdframed}[backgroundcolor=black!10,rightline=false,leftline=false,topline=false,bottomline=false]
\vspace{-2mm}
\begin{corollary} \label{cor:2}
In Theorem \ref{th: SFW_VR}, set $m_1 = T^{1/3}$ and $m_t = 1$, for all $t > 1$. Then, Algorithm~\ref{alg:SFW_VR} satisfies
\begin{align*}
\E[ \mathcal{G}(\bx_a) ] = O\left(\frac{ D \left( F(\bx_1) - F(\bx_*) + L^2 + \sigma^2 \right)}{T^{1/3}}  \right).
\end{align*}
\end{corollary}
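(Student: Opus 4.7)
The plan is to obtain Corollary~\ref{cor:2} as an immediate consequence of Theorem~\ref{th: SFW_VR} by substituting the prescribed batch-size schedule $m_1 = T^{1/3}$ and $m_t = 1$ for $t > 1$ into the general bound. No additional analysis is required; in particular, the choices of $\eta_t$, $\gamma_t$, and $\beta_{1,t}$ remain exactly as in Theorem~\ref{th: SFW_VR}, so that bound applies verbatim.

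Concretely, I would start from the upper estimate established in Theorem~\ref{th: SFW_VR},
\begin{equation*}
\E[\mathcal{G}(\bx_a)]
= O\!\left(\frac{D\bigl(F(\bx_1) - F(\bx_*) + L^2 + \tfrac{\sigma^2}{m}\bigr)}{T^{1/3}} + \frac{D\sigma^2}{m_1}\right),
\end{equation*}
and track how the two parameters appear: $m$ controls the per-iteration variance term $\sigma^2/m$ in the numerator of the $1/T^{1/3}$ contribution, while $m_1$ controls the standalone initialization term $D\sigma^2/m_1$. Setting $m = m_t = 1$ for $t > 1$ reduces $\sigma^2/m$ to $\sigma^2$, which is already absorbed into the $O(\cdot)$ on the $1/T^{1/3}$ rate. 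Setting $m_1 = T^{1/3}$ turns the initialization term into $D\sigma^2/T^{1/3}$, which is of the same order as the first term and can therefore be absorbed as well. Summing the two and collapsing constants gives the stated rate $O\bigl(D(F(\bx_1) - F(\bx_*) + L^2 + \sigma^2)/T^{1/3}\bigr)$.

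There is no genuine obstacle here; the content of the result lies in the parameter-balancing choice. The only thing worth emphasizing is that this schedule is essentially optimal for the bound of Theorem~\ref{th: SFW_VR}: increasing $m_1$ beyond $T^{1/3}$ would make the initialization term vanish faster than $T^{-1/3}$ but would not improve the dominant $1/T^{1/3}$ term, while decreasing $m_1$ would leave the initialization term as the dominant contribution. Similarly, taking $m_t = 1$ for $t > 1$ is sufficient because the STORM-style recursion in Algorithm~\ref{alg:SFW_VR} already controls the accumulated variance without requiring large batches at later iterations, which is precisely the algorithmic motivation for introducing the variance-reduced estimator in the first place.
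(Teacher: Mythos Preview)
Your proposal is correct and matches the paper's approach exactly: the corollary is obtained by directly substituting $m_1 = T^{1/3}$ and $m = 1$ into the bound of Theorem~\ref{th: SFW_VR}, with both resulting terms being $O(T^{-1/3})$. The paper does not provide a separate proof for this corollary, treating it as an immediate consequence.
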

\end{mdframed}

Corollary~\ref{cor:2} implies that the number of iterations is $T = O(1/\epsilon^3)$ to obtain an expected $\epsilon$-gap.
Here, we recall that $m_t$ is the batch size to construct the stochastic gradient estimate $\bar{\bg}_t$. 
 Hence, the corresponding number of SFO calls indicated by Corollary~\ref{cor:2} is $m_1 + (T-1)m = O\left( \frac{1}{\epsilon^3} \right)$, 
which matches the best-known complexity bounds in the literature (see Table~\ref{table:sfw}). However, we note that Algorithm~\ref{alg:SFW_VR} requires a large batch size only at initialization, and its amortized average batch size is at most 2, thereby avoiding the need for large batches throughout the iterations, unlike \citet{yurtsever2019conditional,hassani2020stochasticconditionalgradient}.
Furthermore, compared to \citet{zhang2020one}, our result does not require any assumptions on the Hessian or additional structural assumptions on the data distribution, while still maintaining a small average batch size. 
We also note that in the case of SignSGD and its variants \citep{karimireddy2019error,safaryan2021stochastic,crawshaw2022robustness}, there is a line of work leveraging variance reduction techniques, see e.g., \cite{chzhen2023signsvrgfixingsignsgdvariance, qin2023convergencesignbasedrandomreshuffling, jiang2024efficientsignbasedoptimizationaccelerating} and the references therein. On the other hand, based on the connection between Lion and Stochastic FW that we have highlighted in an earlier section, we know that applying Algorithm~\ref{alg:SFW_VR} over the $\ell_\infty$-norm ball can yield an optimization dynamic with variance reduction that uses the sign of a stochastic gradient to update. By Corollary~\ref{cor:2}, this variant achieves an SFO complexity guarantee of $O(1/\epsilon^3)$, which parallels that in \citet{jiang2024efficientsignbasedoptimizationaccelerating}
and \citet{arjevani2023lower} for finding an $\epsilon$-stationary point.
However, we emphasize that our result of Stochastic FW is applicable to any convex and compact constraint set, not only limited to the $\ell_\infty$-norm ball.

\subsection{Heavy-tailed regime}

\begin{wrapfigure}[13]{R}{0.45\textwidth}
\vspace*{-8mm}
    \begin{minipage}{0.45\textwidth}
\begin{algorithm}[H]
\caption{Stochastic FW with Clipping} \label{alg:SFW_Clipping}
\begin{algorithmic} 
\small
\STATE \textbf{Required:} Step size $\{ \eta_t\}$, momentum parameters
$\{\beta_{1,t}\}$, $\{\gamma_t\}$, and clipping parameter $M$.
\STATE \textbf{Initialize:} $\bg_0 = 0$, and $\bx_1 \in \mathcal{C}$.
\FOR{$t=1,2,\dots$}
\STATE Sample $\Xi_t \sim \mathcal{D}$.
\STATE Get $\bar{\bg}_t = 
\left( 1 \wedge \frac{M}{ \| \nabla f(\bx_t; \Xi_t) \| } \right) \nabla f(\bx_t; \Xi_t )
$. 
\STATE Update
$\bg_t = \left(1- \gamma_t\right) \bg_{t-1} + \gamma_t \bar{\bg}_t.$
\STATE Set $\hat{\bg}_t =  \frac{\beta_{1,t}}{(1-\gamma_t)} \bg_t + \left( 1 - 
\frac{\beta_{1,t}}{(1-\gamma_t)}
  \right) \bar{\bg}_t
   $.
\STATE Compute $\bu_t = \arg\min_{ \bv \in \mathcal{C}} \langle \bv , \hat{\bg}_t \rangle$.
\STATE Update $\bx_{t+1}  = (1-\eta_t) \bx_t + \eta_t \bu_t  $.
\ENDFOR 
\end{algorithmic}
\end{algorithm}
  \end{minipage}
\end{wrapfigure}

In this subsection, we shift gears to develop Stochastic FW methods that enjoy theoretical guarantees under $p$-th moment bounded noise, for any $p \in (1, 2]$. As discussed in the introduction, it is widely observed that stochastic gradients in deep learning—particularly in training large language models—are heavy-tailed~\citep{zhang2020adaptive,gurbuzbalaban2021heavy,hodgkinson2021multiplicative,kunstner2023noise,ahn2023linear}. Therefore, the commonly used bounded variance assumption may not be appropriate for designing and analyzing stochastic optimization algorithms. To mitigate the issue of heavy-tailed noise,
\emph{clipping} has been used to establish several nice results in SGD recently \citep{zhang2020adaptive,gorbunov2020stochastic,mai2021stability,cutkosky2021high,nguyen2023improved,hubler2024gradient,schaipp2024sgd}, as well as in adaptive methods such as Adagrad and Adam \citep{chezhegov2025clippingimprovesadamnormadagradnorm, li2023highprobabilityanalysisnonconvex}. 
However, very little prior research has focused on FW-like methods under heavy-tailed noise, and the work by \citet{tang2022high} is the only one we are aware of. That said, unlike our work and the aforementioned related results, which assume bounded $p$-th moment noise (Assumption~\ref{assume:v}), \citet{tang2022high} adopt a different set of assumptions on the stochastic noise and assume convexity of the function, and hence our results in the following are not directly comparable to them (more detail in Appendix~\ref{app:related_works}). We also note that their algorithm requires a large batch size, whereas ours allows the batch size to be one.

We propose Algorithm~\ref{alg:SFW_Clipping}, which can be viewed as integrating the clipping technique from Algorithm~\ref{alg:Lion_Muon_as_FW}. We note that from the idea of \emph{Lions and Muons as Stochastic FWs}, 
one can obtain a variant of Lion and Muon with clipping
from Algorithm~\ref{alg:SFW_Clipping}.
Specifically, the new variant of Lion (which we call \textsc{Lion+}) shares the same steps as Lion, except that Line~5 in Algorithm~\ref{alg:Lion} is replaced with
$\bar{\bg}_t = 
\left( 1 \wedge \frac{M}{ \| \nabla f(\bx_t; \Xi_t) \| } \right) \nabla f(\bx_t; \Xi_t)$,
where $M > 0$ is the parameter of clipping. Similarly, the new variant of Muon with clipping, denoted \textsc{Muon+}, follows the same updates as Algorithm~\ref{alg:Muon}, except that Line~5 is replaced with a clipped stochastic gradient. Due to the space limit, we defer the full algorithmic descriptions of \textsc{Lion+} and \textsc{Muon+} to Appendix~\ref{app:Section4}.
Theorem~\ref{thm:SFW-Clipping} below provides the convergence rate guarantee of the proposed Stochastic FW with clipping.

\begin{mdframed}[backgroundcolor=black!10,rightline=false,leftline=false,topline=false,bottomline=false]
\vspace{-2mm}
\begin{theorem} \label{thm:SFW-Clipping}
Suppose Assumptions~\ref{assume:LG}-a,~\ref{assume:v}, and~\ref{assume:G} hold.
Set
$\gamma_t = \gamma = T^{\frac{-p}{3p-2}}, \,
\beta_{1,t} = \beta = (1-\gamma)(1- T^{ \frac{-p}{3p-2} }), \,
M = \frac{\sigma}{ \gamma^{1/p} } \vee 2G, \, \text{ and }
\eta_t = \eta  = \ \frac{1}{ \sqrt{LT} D } \wedge \frac{\gamma}{\beta} \frac{1}{D}
\wedge 
\frac{\sqrt{\gamma} }{ D \sqrt{\beta T L  } } 
\wedge
\frac{1-\gamma}{20 \gamma  D T  M  \log \frac{4T}{\delta} }
\wedge
\frac{1}{ 2TD (1-\frac{\beta}{1-\gamma}) M (1+\gamma) } .
$
Then, with probability at least $1-\delta$, Algorithm~\ref{alg:SFW_Clipping} has
$ \frac{1}{T} \sum_{t=1}^{T} \mathcal{G}(\bx_t) 
= O\left( \frac{\log \frac{T}{\delta}}{T^{ \frac{p-1}{3p-2} }}  \right)$, where $p \in (1,2]$.
\end{theorem}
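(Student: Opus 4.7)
}
The plan is to follow the by-now standard template for high-probability analysis of clipped SGD under the bounded $p$-th moment noise, but adapted to the Frank-Wolfe setting where what matters is controlling $\|\hat{\bg}_t - \nabla F(\bx_t)\|$ rather than the gradient norm of a descent direction. First, I would decompose the clipped stochastic gradient $\bar{\bg}_t$ into its conditional mean and noise: write $\bar{\bg}_t = \nabla F(\bx_t) + \bb_t + \bw_t$, where $\bb_t := \E[\bar{\bg}_t \mid \mathcal{F}_{t-1}] - \nabla F(\bx_t)$ is the clipping bias and $\bw_t := \bar{\bg}_t - \E[\bar{\bg}_t \mid \mathcal{F}_{t-1}]$ is a martingale difference. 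Using Assumption~\ref{assume:G} together with the choice $M \geq 2G$ and a standard Markov-type argument on $\{ \|\nabla f(\bx_t;\Xi_t)\| > M \}$, I obtain $\|\bb_t\| \lesssim \sigma^p / M^{p-1}$, together with the deterministic bound $\|\bw_t\| \leq 2M$ and second-moment bound $\E[\|\bw_t\|^2 \mid \mathcal{F}_{t-1}] \lesssim M^{2-p}\sigma^p$.

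Next, I would control the error of the momentum estimator. Using the fact, noted in the remarks before the theorem, that with the prescribed parameters $\hat{\bg}_t = (1-\gamma)\bg_t + \gamma \bar{\bg}_t$, I would unroll $\bg_t = \gamma\sum_{s=1}^{t}(1-\gamma)^{t-s}\bar{\bg}_s$ and decompose
\begin{equation*}
\hat{\bg}_t - \nabla F(\bx_t) \;=\; \underbrace{\gamma\sum_{s=1}^{t}(1-\gamma)^{t-s}\bigl(\nabla F(\bx_s)-\nabla F(\bx_t)\bigr)}_{\text{drift from iterate change}} \;+\; \underbrace{\gamma\sum_{s=1}^{t}(1-\gamma)^{t-s}\bb_s}_{\text{bias accumulation}} \;+\; \underbrace{\gamma\sum_{s=1}^{t}(1-\gamma)^{t-s}\bw_s}_{\text{martingale noise}} \;+\; O(\gamma)\text{-terms}.
\end{equation*}
The drift term is bounded via Assumption~\ref{assume:LG}-a and $\|\bx_s - \bx_t\| \leq \eta D (t-s)$, yielding an $O(\eta L D/\gamma)$ contribution. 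The bias term is immediately bounded by $\sigma^p/M^{p-1}$. The third, martingale, term is the delicate one: I would apply a Freedman-type inequality (Hilbert-space vector version, e.g., via applying scalar Freedman to each coordinate of the inner product with a peeling argument, or directly a dimension-free norm Freedman bound) to $\{\gamma(1-\gamma)^{t-s}\bw_s\}_s$. This uses the almost-sure bound $2\gamma M$ on each summand and the conditional variance bound $\gamma^2 (1-\gamma)^{2(t-s)} M^{2-p}\sigma^p$, producing a high-probability bound of order $\sqrt{\gamma M^{2-p}\sigma^p \log(T/\delta)} + \gamma M\log(T/\delta)$; a union bound over $t \leq T$ supplies the $\log(T/\delta)$ factor in the theorem.

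I would then combine this with the Frank-Wolfe descent identity. By Assumption~\ref{assume:LG}-a and the update rule,
\begin{equation*}
F(\bx_{t+1}) \;\leq\; F(\bx_t) + \eta\langle \nabla F(\bx_t), \bu_t - \bx_t\rangle + \tfrac{L\eta^2}{2}\|\bu_t-\bx_t\|^2
\;\leq\; F(\bx_t) - \eta\mathcal{G}(\bx_t) + \eta D\|\hat{\bg}_t - \nabla F(\bx_t)\| + \tfrac{L\eta^2 D^2}{2},
\end{equation*}
where the last inequality uses that $\bu_t$ is the linear minimizer of $\hat{\bg}_t$ over $\mathcal{C}$ and adds/subtracts $\nabla F(\bx_t)$ inside the inner product, applying $\|\bu_t - \bx_t\| \leq D$ together with the true-gradient linear minimizer identity defining $\mathcal{G}(\bx_t)$. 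Telescoping from $t=1$ to $T$, dividing by $\eta T$, and rearranging gives
\begin{equation*}
\frac{1}{T}\sum_{t=1}^{T}\mathcal{G}(\bx_t) \;\lesssim\; \frac{F(\bx_1)-F(\bx_*)}{\eta T} + L\eta D^2 + \frac{D}{T}\sum_{t=1}^{T}\|\hat{\bg}_t - \nabla F(\bx_t)\|.
\end{equation*}
Substituting the bounds from the momentum-error analysis, the dominant terms become $1/(\eta T) + \eta L + \eta L D/\gamma + \sigma^p/M^{p-1} + \sqrt{\gamma M^{2-p}\sigma^p}\log(T/\delta) + \gamma M\log(T/\delta)$. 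The parameter choices in the theorem statement are exactly those that equalize these contributions: $M \asymp \sigma/\gamma^{1/p}$ makes the bias and the $\gamma M$ penalty each $O(\sigma\gamma^{1-1/p})$, and the $\sqrt{\cdot}$ term is of the same order $\sigma \gamma^{(p-1)/p}\log(T/\delta)$; then $\gamma = T^{-p/(3p-2)}$ and $\eta \asymp 1/\sqrt{LT}$ (with the other ceilings in $\eta$ playing the role of ensuring the Freedman conditions and the drift control are non-vacuous) balance these noise terms against $1/(\eta T)$, producing the final rate $O\bigl(\log(T/\delta)\,T^{-(p-1)/(3p-2)}\bigr)$.

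The main obstacle I expect is the high-probability bound on the momentum-accumulated martingale $\gamma\sum_s (1-\gamma)^{t-s}\bw_s$: one has to invoke a vector Freedman inequality in the possibly infinite-dimensional Hilbert space $\mathcal{H}$, carefully book-keep the conditional second-moment bound and the per-step almost-sure bound (both of which depend on the clipping level $M$), and perform a union bound over $t \in [T]$ without losing more than a logarithmic factor. The many intertwined ceilings in the definition of $\eta$ also need to be justified as consistent with a single $1/\sqrt{LT}$-type rate once the parameters are substituted, which is the source of the somewhat intricate minimum appearing in the step-size prescription.
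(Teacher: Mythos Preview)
Your proposal is correct and follows essentially the same route as the paper: decompose the clipped stochastic gradient into bias plus martingale noise, unroll the momentum recursion into drift~$+$~accumulated bias~$+$~geometrically weighted martingale, control each piece, plug into the Frank--Wolfe descent inequality, and balance parameters.

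Two implementation differences are worth noting. First, for the martingale term $\gamma\sum_{s\le t}(1-\gamma)^{t-s}\bw_s$ you invoke a generic vector/Hilbert-space Freedman inequality; the paper instead imports the projection-to-scalar machinery of \citet{liu2023breaking} (their Lemmas~10--12 here appearing as Lemmas~\ref{lem:Tail-Us}--\ref{lem:Tail-b_t}), which turns the norm of the vector martingale into two scalar martingales $U_s^t,R_s^t$ and applies scalar Freedman to each. Both approaches yield the same $\sqrt{\gamma\sigma^pM^{2-p}\log(T/\delta)}+\gamma M\log(T/\delta)$ bound. Second, the paper wraps the whole argument in an induction over $\tau\in[T]$ on the event $\event_\tau^E=\event_\tau^F\cap\event_\tau^A\cap\event_\tau^B$, whereas you propose a direct union bound. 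The induction is inherited from \citet{liu2023breaking}, where it is genuinely needed to ensure $\|\nabla F(\bx_t)\|\le M/2$ (the hypothesis of Lemma~\ref{lem:b_t}); here that condition holds deterministically by Assumption~\ref{assume:G} and $M\ge 2G$, so your simpler union-bound approach is valid and arguably cleaner. One small point to be careful with: your ``$O(\gamma)$-terms'' must absorb both the initialization $(1-\gamma)^t\nabla F(\bx_1)$ and the non-averaged residual $\gamma\zeta_t$; the latter contributes $O(\gamma M)$ per step, which is the same order as your $\gamma M\log(T/\delta)$ martingale term and is exactly what the fifth ceiling in the step-size definition controls.
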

\end{mdframed}

We note that the convergence rate $O\left(\log\left(\frac{T}{\delta}\right) T^{ \frac{1-p}{3p-2} } \right)$ for the Frank-Wolfe gap achieved by Algorithm~\ref{alg:SFW_Clipping} parallels those results for SGD with clipping under heavy-tailed noise in the literature \citep{zhang2020adaptive,cutkosky2021high,hubler2024gradient}, which converges to an $\epsilon$ expected gradient norm. Notably, the result of Theorem~\ref{thm:SFW-Clipping} is established even when the batch size of stochastic gradients is fixed to 1 at each iteration.

\begin{wrapfigure}[14]{R}{0.45\textwidth}
\vspace*{-7mm}
    \begin{minipage}{0.45\textwidth}
\begin{algorithm}[H]
\caption{Stochastic FW with Clipping and Variance Reduction } \label{alg:SFW_Clipping_VR}
\begin{algorithmic} 
\small
\STATE \textbf{Required:} Step size $\{ \eta_t\}$, momentum parameters
$\{\beta_{1,t}\}$, $\{\gamma_t\}$, and clipping parameter $M$.
\STATE \textbf{Initialize:} $\bg_0 = 0$, and $\bx_1 \in \mathcal{C}$.
\FOR{$t=1,2,\dots$}
\STATE Sample $\Xi_t \sim \mathcal{D}$.
\STATE Set $\bar{\bg}_t = 
\left( 1 \wedge \frac{M}{ \| \nabla f(\bx_t; \Xi_t) \| } \right) \nabla f(\bx_t; \Xi_t )
$. 
\STATE Update
$\bg_t = \left(1- \gamma_t\right) \bg_{t-1} + \gamma_t \bar{\bg}_t  + 
\qquad \qquad \qquad  \text{ } \text{ }  \, 
 (1-\gamma_t) \mathbbm{1}_{t\geq 2}
\left(  \nabla f(\bx_t; \Xi_t ) -  \nabla f \left( \bx_{t-1}; \Xi_t \right) \right).$
\STATE Set $\hat{\bg}_t =  \frac{\beta_{1,t}}{(1-\gamma_t)} \bg_t + \left( 1 - 
\frac{\beta_{1,t}}{(1-\gamma_t)}
  \right) \bar{\bg}_t
   $.
\STATE Compute $\bu_t = \arg\min_{ \bv \in \mathcal{C}} \langle \bv , \hat{\bg}_t \rangle$.
\STATE Update $\bx_{t+1}  = (1-\eta_t) \bx_t + \eta_t \bu_t  $.
\ENDFOR 
\end{algorithmic}
\end{algorithm}
  \end{minipage}
\end{wrapfigure}

We also propose another algorithm that incorporates both the clipping operation and variance reduction, i.e., Algorithm~\ref{alg:SFW_Clipping_VR} on the right.

Theorem~\ref{thm:SFW-Clipping-VarianceReduction} shows the convergence rate of the proposed method, and 
we note that, similarly to Theorem~\ref{thm:SFW-Clipping}, this guarantee holds even when the batch size of the stochastic gradients is $1$ at each iteration.

In particular, Theorem~\ref{thm:SFW-Clipping-VarianceReduction} shows an improved complexity, compared to Theorem~\ref{thm:SFW-Clipping}. 
We also note that Theorem~\ref{thm:SFW-Clipping-VarianceReduction} for the proposed Stochastic FW can be seen as a counterpart to a notable result by \citet{liu2023breaking} on SGD for non-convex stochastic optimization under heavy-tailed noise, where the authors establish the same rate for obtaining an $\epsilon$-small expected gradient norm via normalized SGD with clipping and variance reduction.

\begin{mdframed}[backgroundcolor=black!10,rightline=false,leftline=false,topline=false,bottomline=false]
\vspace{-2mm}
\begin{theorem} \label{thm:SFW-Clipping-VarianceReduction}
Suppose Assumptions~\ref{assume:LG}-a,~\ref{assume:LG}-b,~\ref{assume:v}, and~\ref{assume:G} hold.
Set
$\gamma_t = \gamma = T^{\frac{-p}{2p-1}}$, $\beta_{1,t} = \beta
= \left( 1 - \gamma \right) \left( 1 - T^{\frac{-p}{2p-1}} \right)$
, $M = \frac{\sigma}{ \gamma^{1/p} } \vee 2G$, 
and $\eta_t = \eta = \frac{1}{ \sqrt{LT} D } \wedge \frac{\gamma}{\beta} \frac{1}{D} 
\wedge
\frac{\gamma^{1/4} }{ D \sqrt{ 9 T L \beta  \log \frac{3T}{\delta} } }
\wedge
\frac{1-\gamma}{20 \gamma  D T  M \beta  \log \frac{4T}{\delta} }
\wedge
\frac{1}{2 TD \left( 1 -  \frac{\beta}{1-\gamma} \right)   M (1+\gamma)}.$
Then, with probability at least $1-\delta$, Algorithm~\ref{alg:SFW_Clipping_VR} has
$ \frac{1}{T} \sum_{t=1}^{T} \mathcal{G}(\bx_t) 
= O\left( \frac{\log \frac{T}{\delta}}{T^{ \frac{p-1}{2p-1} }}  \right)$, where $p \in (1,2]$.
\end{theorem}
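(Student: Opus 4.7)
\textbf{Proof proposal for Theorem~\ref{thm:SFW-Clipping-VarianceReduction}.}
The plan is to follow the same high-level template as the proof of Theorem~\ref{thm:SFW-Clipping}, but replace the error analysis of the exponential-moving-average estimator with a sharper bound that exploits the variance-reduction term enabled by Assumption~\ref{assume:LG}-b. First I would use $L$-smoothness applied to the FW step $\bx_{t+1} = (1-\eta)\bx_t + \eta \bu_t$, together with $\|\bu_t - \bx_t\| \leq D$, to obtain a per-step descent inequality
\[
F(\bx_{t+1}) \le F(\bx_t) + \eta \langle \bu_t - \bx_t , \nabla F(\bx_t)\rangle + \tfrac{L \eta^2 D^2}{2}.
\]
Writing $\langle \bu_t - \bx_t, \nabla F(\bx_t)\rangle = \langle \bu_t - \bx_t, \hat{\bg}_t\rangle + \langle \bu_t - \bx_t, \nabla F(\bx_t) - \hat{\bg}_t\rangle$, using optimality of $\bu_t$ for the LMO against the point $\bv^\star \in \arg\min_{\bv \in \mathcal{C}} \langle \bv, \nabla F(\bx_t) \rangle$, and Cauchy--Schwarz on the residual inner products, I would arrive at
\[
\eta\, \mathcal{G}(\bx_t) \le F(\bx_t) - F(\bx_{t+1}) + 2\eta D \| \hat{\bg}_t - \nabla F(\bx_t) \| + \tfrac{L \eta^2 D^2}{2}.
\]
Summing over $t$ and dividing by $\eta T$ reduces the problem to obtaining a high-probability uniform bound on $\|\hat{\bg}_t - \nabla F(\bx_t)\|$.

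Next, I would analyse the tracking error $\epsilon_t := \bg_t - \nabla F(\bx_t)$. Under the parameter choice in the statement one has $\beta/(1-\gamma) = 1-\gamma$, so that $\hat{\bg}_t - \nabla F(\bx_t) = (1-\gamma)\epsilon_t + \gamma A_t$ with $A_t := \bar{\bg}_t - \nabla F(\bx_t)$, and it suffices to control $\|\epsilon_t\|$ and $\|A_t\|$. Unrolling Line~6 of Algorithm~\ref{alg:SFW_Clipping_VR} yields the recursion
\[
\epsilon_t = (1-\gamma) \epsilon_{t-1} + \gamma A_t + (1-\gamma) B_t, \qquad t \ge 2,
\]
where $B_t := \big(\nabla f(\bx_t; \Xi_t) - \nabla f(\bx_{t-1}; \Xi_t)\big) - \big(\nabla F(\bx_t) - \nabla F(\bx_{t-1})\big)$ is a conditionally mean-zero martingale difference. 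The crucial observation, which is absent in the analysis of Theorem~\ref{thm:SFW-Clipping}, is that by Assumption~\ref{assume:LG}-b and $\|\bx_t - \bx_{t-1}\| \le \eta D$, we have $\|B_t\| \le 2 L \eta D$ almost surely and conditional variance at most $L^2 \eta^2 D^2$. This replaces the unclipped-noise second moment of order $M^{2-p}\sigma^p$ that governs the non-VR proof, and is precisely what drives the rate from $T^{-(p-1)/(3p-2)}$ to $T^{-(p-1)/(2p-1)}$.

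For the clipping term, I would further split $A_t = A_t^{\mathrm m} + A_t^{\mathrm b}$ into its martingale part $A_t^{\mathrm m} := \bar{\bg}_t - \E_{\Xi_t}[\bar{\bg}_t]$ and its bias $A_t^{\mathrm b} := \E_{\Xi_t}[\bar{\bg}_t] - \nabla F(\bx_t)$. Standard clipping estimates under Assumption~\ref{assume:v} (with $M \ge 2G$ so that $\|\nabla F(\bx_t)\| \le M/2$) give $\|A_t^{\mathrm b}\| \lesssim \sigma^p M^{1-p}$ together with $\|A_t^{\mathrm m}\| \le 2M$ almost surely and $\E_{\Xi_t}\|A_t^{\mathrm m}\|^2 \lesssim M^{2-p}\sigma^p$. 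Unrolling the recursion with constant $\gamma$ expresses $\epsilon_t$ as an exponentially weighted sum of the $A_s$ and $B_s$; after separating out the deterministic bias contribution, I would apply a Freedman/Bernstein-type inequality for vector-valued martingale differences to each of the two remaining sums, using the almost-sure and conditional-variance bounds just stated, and take a union bound over $t \in \{1,\dots,T\}$. The outcome is a uniform-in-$t$ high-probability bound of the form $\|\epsilon_t\| \lesssim \sigma^p M^{1-p} + \sqrt{\gamma M^{2-p}\sigma^p \log(T/\delta)} + \gamma M \log(T/\delta) + \eta D L \sqrt{\log(T/\delta)/\gamma} + \eta D L \log(T/\delta)$.

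Finally, substituting $M = \sigma/\gamma^{1/p} \vee 2G$ turns the bias into $O(\sigma \gamma^{1-1/p})$ and collapses the stochastic $A$-term to the same order, while the choice $\gamma = T^{-p/(2p-1)}$ together with the multi-term minimum defining $\eta$ balances the residual $L\eta D / \sqrt{\gamma}$ contribution with $\eta L D^2$ from the smoothness term and $1/(\eta T)$ from the telescoping potential. Plugging these back into the averaged descent inequality delivers the claimed $O(\log(T/\delta)\, T^{-(p-1)/(2p-1)})$ rate. The main obstacle I anticipate is the Freedman-type concentration for the clipped-noise sum when $p < 2$: since $A_t^{\mathrm m}$ is only controlled in $p$-th moment, vanilla Azuma--Hoeffding is unavailable, and one must combine the clipped almost-sure bound with the $p$-th-moment variance estimate while keeping the dependence on both $M$ and $\gamma$ tight enough that the final $\eta$-schedule in the statement actually makes every competing term equal to order $T^{-(p-1)/(2p-1)}$; the slightly awkward five-term minimum defining $\eta$ is precisely what this balancing dictates.
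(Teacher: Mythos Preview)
Your proposal is correct and mirrors the paper's proof almost exactly: the same descent inequality from $L$-smoothness and LMO optimality, the same error recursion $\epsilon_t = (1-\gamma)\epsilon_{t-1} + (1-\gamma)Z_t + \gamma\zeta_t$ with $Z_t$ the variance-reduction martingale and $\zeta_t$ the clipped-noise term, the same bias/martingale split $\zeta_t = \zeta_t^u + \zeta_t^b$, and the same Freedman-type concentration on the exponentially weighted sums together with the almost-sure bound $\|Z_t\| \le 2L\eta D$ from Assumption~\ref{assume:LG}-b. The obstacle you flag---concentration for the heavy-tailed clipped sum---is resolved in the paper exactly by importing the scalar-martingale device (the $U_s^t$, $R_s^t$ sequences) of Liu, Zhang and Zhou (2023), Lemmas~10--12, applied verbatim; all other ingredients, including the final balancing via the five-term $\eta$, match yours up to constants and a (non-load-bearing) induction wrapper.
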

\end{mdframed}

To our knowledge, both Theorem~\ref{thm:SFW-Clipping} and Theorem~\ref{thm:SFW-Clipping-VarianceReduction} are the first results of this kind for Stochastic FW-type methods under heavy-tailed noise in \emph{non-convex} optimization. Theorem~\ref{thm:SFW-Clipping-VarianceReduction} shows that with an additional assumption on the problem structure (i.e., Assumption~\ref{assume:LG}-b), the rate can be improved to $\tilde{O}\left(T^{ \frac{1-p}{2p-1} }\right)$ from $\tilde{O}\left(T^{\frac{1-p}{3p-2}}\right)$.

\section{Experiments}
\label{sec:exp}

In this section, we evaluate the performance of \textsc{Lion+} and \textsc{Muon+} through numerical experiments. Specifically, we train a nanoGPT \footnote{https://github.com/karpathy/nanoGPT} on the Shakespeare dataset. We compare the performance of the proposed algorithms against Lion \citep{chen2024symbolic} and Muon \citep{jordan2024muon}. In the cases of Muon and \textsc{Muon+}, we use the efficient Newton-Schulz iteration, proposed by \citet{bernstein2024oldoptimizernewnorm} for the orthogonalization step, and we use AdamW \citep{loshchilov2017decoupled} to optimize the network’s one-dimensional (i.e., vector) parameters. To ensure a fair comparison, we keep AdamW's hyperparameters fixed. For all the evaluation algorithms, we employ a cosine learning rate scheduler. The dropout rate is set to 0.2. We repeat all the experiments with five different seed values and report the average. We refer the reader to Appendix~\ref{app:hyperparams} for the detailed training configuration and hyperparameter tuning for each of the comparison algorithms. All experiments are conducted on one NVIDIA A100 GPU.

\textbf{Results:} Figure~\ref{fig:nanoGPT} shows the training and validation loss curves averaged over five runs and evaluated every 10 and 50 steps, respectively. The results indicate that methods with gradient clipping consistently obtain a lower validation loss compared to their unclipped counterparts. Specifically, by comparing the number of steps to reach a validation loss below $1.47$, \textsc{Lion+} requires $2950$ steps,  approximately $19.18\%$ less steps than Lion ($3650$ steps). Similarly, \textsc{Muon+} achieves the target validation loss in $4000$ steps, demonstrating an approximately $5.88\%$ reduction in steps compared to Muon ($4250$ steps).

Additional experimental results are provided in Appendix~\ref{app:addexp}.

\begin{figure}[htbp]
  \centering
  \begin{subfigure}[b]{0.48\textwidth}
    \includegraphics[width=\textwidth]{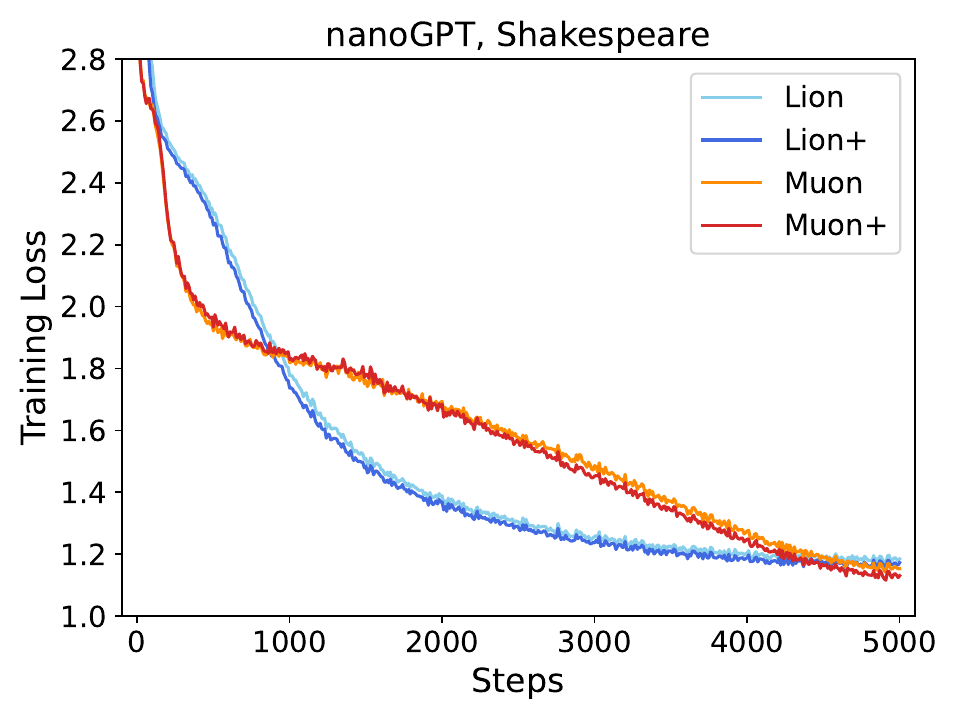}
  \end{subfigure}
  \hfill
  \begin{subfigure}[b]{0.48\textwidth}
    \includegraphics[width=\textwidth]{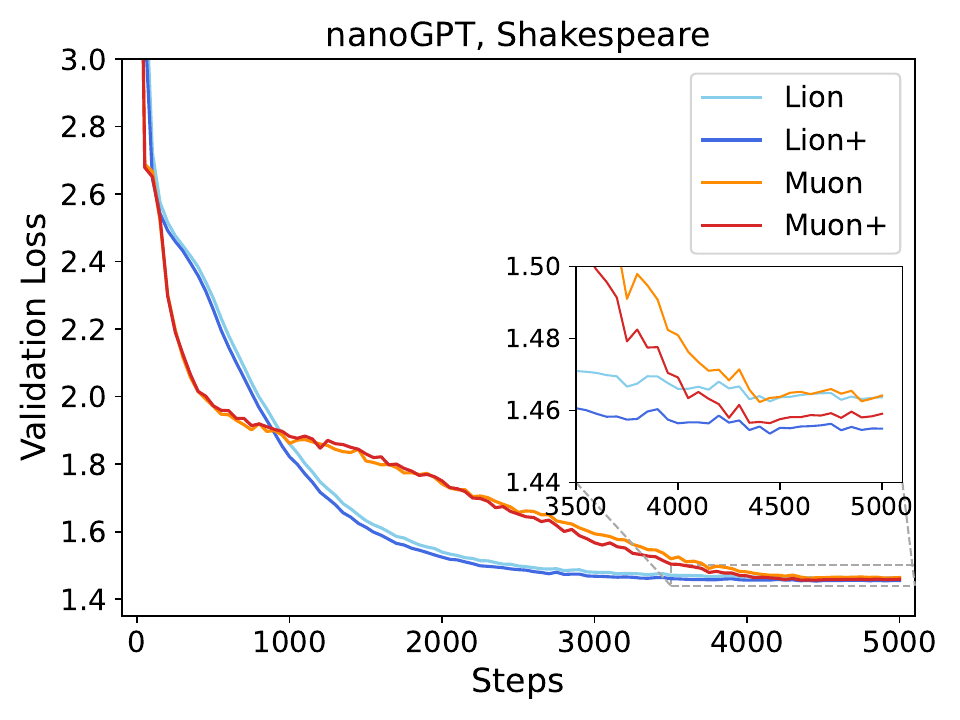}
  \end{subfigure}
  \vspace{-10pt}
  \caption{Loss curves for nanoGPT training on the Shakespeare dataset. We plotted the training and validation loss per 10 and 50 steps, respectively. The results are averaged across three seed values.}
  \label{fig:nanoGPT}
\end{figure}

\section{Discussion and Future Directions}
\label{sec:discussion}

In this work, we have presented a generalized formulation of Stochastic Frank-Wolfe algorithms, and we have established a comprehensive set of previously missing theoretical results in the non-convex setting, including smooth functions with bounded variance (Theorem~\ref{th: Lion_Muon_as_FW}), smooth stochastic functions with bounded variance and variance reduction (Theorem~\ref{alg:SFW_VR}), and with clipping in the presence of more heavy-tailed noise assumptions (Theorems~\ref{thm:SFW-Clipping} and \ref{thm:SFW-Clipping-VarianceReduction}). In particular, we have demonstrated that the proposed Stochastic Frank-Wolfe unifies Lion and Muon under suitable norm constraints, while also extending to \textit{any} convex constraint set, which in turn broadens the scope of algorithms to which its convergence guarantees apply. In the case of norm-ball constraints, as in Lion or Muon, we have shown that our convergence guarantees in terms of the Frank-Wolfe gap translate into convergence to a KKT point of the constrained problem. Furthermore, we have demonstrated that by incorporating a variance reduction term, we can obtain an improved convergence rate in terms of stochastic gradient evaluations. Finally, we have considered the heavy-tailed noise case, where the stochastic gradients satisfy a weaker assumption than bounded variance, and we have provided variants with gradient clipping and variance reduction that satisfy high-probability bounds. Our experimental results validated the theoretical guarantees by demonstrating the improved convergence of the enhanced variants over the baseline ones.

It still remains an open problem whether the heavy-tailed analysis can be modified to account for the noise level $\sigma$ in the bound. More precisely, in the absence of noise, the high-probability bounds we have obtained do not adapt to an improved convergence rate as expected in the noiseless setting, unlike the in-expectation bounds we have provided for the bounded variance case. We also note that it is an interesting question whether the convergence analysis of these algorithms can be established under more general norms in Banach spaces. Importantly, our theoretical analysis measures the constraint set diameter using Hilbert-space norms. Since norms are equivalent only up to dimension-dependent constants in finite-dimensional spaces, incorporating a constraint set defined by a different norm could introduce an explicit dependence on the dimension. Another theoretical limitation is that our Stochastic Frank-Wolfe analysis does not extend to the Lion and Muon variants \textit{without} weight decay, since Frank-Wolfe methods inherently address constrained optimization problems. A natural next step is to investigate how varying the constraint set can yield new algorithmic variants with potentially improved dynamics and performance, which we leave for future work.

\section*{Acknowledgements}
The authors appreciate the support from NSF CCF-2403392, as well as the Google Gemma Academic Program and Google Cloud Credits.

\bibliographystyle{plainnat}

\newpage

\appendix

\section{More Related Works} \label{app:related_works}

\textbf{Lion.} The Lion optimizer, proposed by \citet{chen2024symbolic}, 
 is a recent algorithm discovered via program search that has been experimentally shown to have superior generalization properties on various tasks compared to AdamW, which in turns has drawn further research attention in improving the algorithm and theoretical foundation \citep{chen2023lion,liu2024communicationefficientdistributedtraining,dong2024convergence,kosson2024rotationalequilibriumweightdecay,kosson2024analyzingreducingneed,liang2024memoryefficientllmtrainingonline,liang2025cautiousoptimizersimprovingtraining,zhao2024deconstructing}.
Notably, \citet{chen2023lion} introduce a general family of Lion-$\mathcal{K}$ algorithms by developing a Lyapunov function for the optimization dynamics and show that Lion solves an $\ell_{\infty}$-norm constrained optimization problem. \citet{liu2024communicationefficientdistributedtraining} present an adaptation of Lion within the distributed optimization setting, allowing efficient reduction of communication costs.  \citet{dong2024convergence} provide a convergence analysis of Lion to a KKT point of the constrained problem and extend the analysis to stationary points in the unconstrained case. 
\citet{kosson2024rotationalequilibriumweightdecay} analyze the influence of the interplay between weight decay and learning rate in the update dynamics of various optimizers, including Lion. A subsequent work by the same authors proposes two Lion variants that scale the update size and adjust the hyperparameter choices to be more comparable to those of AdamW \citep{kosson2024analyzingreducingneed}. \citet{liang2024memoryefficientllmtrainingonline} present a conversion of Lion to an online subspace descent algorithm, a process that enables memory efficiency by utilizing the low-rank structure of the gradients and restricting the update states to a dynamically changing subspace. Another variant that accelerates the loss decrease in momentum-based methods by inspecting the direction consistency between the current gradient and the update step was developed by \citet{liang2025cautiousoptimizersimprovingtraining}, who present C-Lion, a cautious version of Lion. \citet{zhao2024deconstructing} conduct an empirical analysis of hyperparameter stability and performance of Lion alongside other optimization algorithms and explore their equivalency in terms of optimal performance. More recently, \citet{yuan2025marsunleashingpowervariance} develop a preconditioned optimization framework by incorporating variance reduction into various adaptive gradient methods, with one variant based on Lion. While their setting offers interesting insights, we note that, in contrast to this work, it does not incorporate the weight decay term used in Lion, and convergence guarantees for the Lion variant are not included. Additionally, their analysis requires a stronger assumption of positive-definite preconditioners, a condition that depends on the evolving algorithmic iterates.

\textbf{Muon and Shampoo.} Muon \citep{jordan2024muon}, on the other hand, is a preconditioned gradient method. The design of Muon was motivated from improving  Shampoo \citep{gupta2018shampoopreconditionedstochastictensor}, which 
aims to maintain necessary second-order information while being efficient in optimization over tensor spaces. Shampoo regained prominence after winning the external tuning track at the
2024 AlgoPerf: Training Algorithms competition (\citet{dahl2023benchmarkingneuralnetworktraining,vyas2024soap,morwani2024new}). \citet{anil2021scalablesecondorderoptimization} further extended Shampoo to obtain a scalable version, capable of handling large model architectures. The work of \citet{vyas2024soap} establishes an equivalence between Shampoo and Adafactor applied in the eigenbasis defined by the Shampoo preconditioner. Extending this observation, they propose SOAP, which allows applying Adam in Shampoo's eigenspace. Notably, a recent work by \citet{morwani2024new} showed an interesting connection between Shampoo's preconditioner and the optimal Kronecker product approximation of certain matrices, allowing a more precise understanding of its dynamics.
Building on Shampoo, \citet{jordan2024muon} propose Muon, an update rule that can be interpreted as a variant of Shampoo without the use of preconditioner accumulators. \citet{bernstein2024oldoptimizernewnorm} demonstrate that Muon can also be viewed as a steepest descent method under the spectral norm and provide a more efficient Newton-Schulz iteration scheme to perform approximate SVD. A recent work by \citet{p2025LMO} showcased Muon without the additional Nesterov-momentum step as an instance of one of their proposed methods labeled Unconstrained Stochastic Conditional Gradient Method, 
with a guarantee on the expected gradient norm,
and also recovered Muon with weight decay from a constrained variant, with a guarantee on the expected Frank-Wolfe gap. Their work further establishes a connection between the linear minimization oracle in Stochastic Frank-Wolfe and other norm-constrained optimizers, offering a valuable step toward unifying these approaches. While offering important contributions, their constrained variant does not incorporate the momentum extrapolation step, and as a result, does not recover Lion or the Nesterov-momentum variant of Muon. Since then, several works have analyzed the convergence properties of Muon in the absence of a weight-decay term \citep{li2025noteconvergencemuon,shen2025convergenceanalysismuon, an2025asgoadaptivestructuredgradient, kovalev2025understandinggradientorthogonalizationdeep, riabinin2025gluonmakingmuon}. Notably, \citet{chen2025muonoptimizesspectralnorm} extend this line of work by studying the convergence of Muon with weight decay within the Lion-$\mathcal{K}$ framework. Muon with weight decay has been shown experimentally to outperform vanilla Muon in the over-train regime \citep{liu2025muonscalablellmtraining}. We emphasize that, while convergence guarantees for Muon with weight decay can be recovered from the proposed Stochastic Frank-Wolfe in this paper, our results are established for the general Stochastic Frank-Wolfe and therefore extend to a broad class of algorithms. Moreover, new variants of Muon can be derived within our proposed methods, for which we also recover the corresponding convergence guarantees. Several recent variants of Muon have additionally been proposed, including \citet{ma2025swansgdnormalizationwhitening, liu2025cosmoshybridadaptiveoptimizer, lau2025polargradclassmatrixgradientoptimizers, huang2025limuonlightfastmuon, he2025lowrankorthogonalizationlargescalematrix, si2025adamuonadaptivemuonoptimizer, liu2025muonscalablellmtraining}.

\textbf{SignSGD with variance reduction.} Optimization methods that use the sign of the gradient at each iteration have gained interest due to their communication efficiency properties. While introduced in \citet{seide14_interspeech} as a gradient compression scheme, 
sign stochastic gradient descent (SignSGD) was first rigorously analyzed by
\citet{bernstein2018signsgdcompressedoptimisationnonconvex}, who showed that SignSGD with large batch sizes achieves $O(1/\epsilon^4)$ complexity for non-convex optimization problems. Following this, a growing body of work has been developed to extend sign-based methods \citep{bernstein2019signsgdmajorityvotecommunication,karimireddy2019error,chen2019distributedtrainingheterogeneousdata,safaryan2021stochasticsigndescentmethods,crawshaw2022robustness,sun2023momentumensuresconvergence,xiang2023distributednonconvexoptimizationonebit,Jin_2025}. In the direction of variance reduction, 
\citet{chzhen2023signsvrgfixingsignsgdvariance} introduce SignSVRG, which incorporates the variance reduction ideas from SVRG \citep{johnson2013acceleratingstochastic} into SignSGD, and establish a convergence rate of $O(m/\epsilon^2)$ in the finite-sum setting, where $m$ is the number of component functions. Building on similar ideas from SVRG and using random reshuffling, \citet{qin2023convergencesignbasedrandomreshuffling} propose SignRVR and its momentum variant, SignRVM, both achieving a convergence rate of $O(m/\epsilon^4)$ in the finite-sum setting. More recently, \citet{jiang2024efficientsignbasedoptimizationaccelerating} utilize the variance reduction technique of STORM \citep{cutkosky2019momentum} in SignSGD to develop SSVR, achieving an improved convergence rate of $O(1/\epsilon^3)$ in the general stochastic setting.
We note all of the aforementioned complexity results of SignSGD and its varaints concern finding an $\epsilon$-small expected gradient norm.

\renewcommand{\arraystretch}{1.3}

\begin{table}[t]
\centering
\scalebox{0.95}{
\begin{tabular}{c c c c c} 
 \noalign{\hrule height 1pt} 
 \textbf{Method} & \textbf{Assumptions} & \textbf{Batch} & \textbf{LMO} & \textbf{SFO} \\ [0.5ex] 
 \hline
 \multicolumn{1}{c}{\begin{tabular}[c]{@{}c@{}}SVFW-S \\ \citet{reddi2016stochastic}\end{tabular}} & 
 \begin{tabular}[c]{@{}c@{}}$F$ is $L$-smooth \\ $\|\nabla f(\bx ; \xi)\| \leq G$\end{tabular} &
 $O\left( 1/\epsilon^{4/3}\right)$ & $O\left( 1/\epsilon^{2}\right)$ & $O\left( 1/\epsilon^{10/3}\right)$ \\
 \hline
 \multicolumn{1}{c}{\begin{tabular}[c]{@{}c@{}}SPIDER-FW \\ \citet{yurtsever2019conditional}\end{tabular}} & 
 \begin{tabular}[c]{@{}c@{}}$f(\cdot ; \xi)$ is $L$-smooth \\ $\E\left[ \| \nabla f(\bx ; \xi) - \nabla F(\bx) \|^2 \right] \leq \sigma^2$\end{tabular} &
 $O\left( 1/\epsilon\right)$ & $O\left( 1/\epsilon^{2}\right)$ & $O\left( 1/\epsilon^{3}\right)$ \\
 \hline
 \multicolumn{1}{c}{\begin{tabular}[c]{@{}c@{}}SFW++ \\ \citet{hassani2020stochasticconditionalgradient}\end{tabular}} &
 \begin{tabular}[c]{@{}c@{}}$| f(\bx ; \xi) | \leq B$ \\ $\|\nabla f(\bx ; \xi)\| \leq G$ \\ $\mathbb{E}\left[ \| \nabla \log p( \bx;\xi ) \|^4\right] \leq G_p^4$ \\
 $\|\nabla^2 f(\bx ; \xi)\| \leq L_{f}$ \\
 $\mathbb{E}\left[ \| \nabla^2 \log p( \bx;\xi ) \|^2\right] \leq L_p^2$ \\
 $\nabla^2 f(\bx ; \xi)$ is $L_{2,f}$-Lipschitz \\
 $\nabla^2 \log p( \bx;\xi )$ is $L_{2,p}$-Lipschitz\end{tabular} &
 $O\left( 1/\epsilon\right)$ & $O\left( 1/\epsilon^{2}\right)$ & $O\left( 1/\epsilon^{3}\right)$ \\
 \hline
 \multicolumn{1}{c}{\begin{tabular}[c]{@{}c@{}}1-SFW \\ \citet{zhang2020one}\end{tabular}} &
 \begin{tabular}[c]{@{}c@{}}$| f(\bx ; \xi) | \leq B$ \\ $\|\nabla f(\bx ; \xi)\| \leq G$ \\ $\mathbb{E}\left[ \| \nabla \log p( \bx;\xi ) \|^4\right] \leq G_p^4$ \\
 $\|\nabla^2 f(\bx ; \xi)\| \leq L_{f}$ \\
 $\mathbb{E}\left[ \| \nabla^2 \log p( \bx;\xi ) \|^2\right] \leq L_p^2$\end{tabular} &
 1 & $O\left( 1/\epsilon^{3}\right)$ & $O\left( 1/\epsilon^{3}\right)$  \\
 [1ex] 
 \noalign{\hrule height 1pt} 
\end{tabular}
}
\caption{Convergence guarantees of stochastic Frank-Wolfe methods with variance reduction for non-convex optimization in the stochastic setting with the bounded variance assumption, i.e., 
$\E_{\xi}\left[ \| \nabla f \left(\bx, \xi \right) - \nabla F (\bx) \|^2 \right] \leq \sigma^2$.
Here, $p(\bx; \xi)$ is the distribution from which the stochastic gradient is sampled. We refer the reader to the references therein for details.} \label{table:sfw}
\vspace*{-5mm}
\end{table}

\textbf{SFW with variance reduction.} Variance reduction techniques have been adopted in Stochastic Frank-Wolfe (SFW) algorithms to produce various algorithmic variants for the \textit{non-convex stochastic setting}. The work of \citet{reddi2016stochastic} is the first to propose a variance-reduced SFW algorithm (SVFW-S) that achieves $O\left( 1/\epsilon^{10/3}\right)$ SFO and $O\left( 1/\epsilon^{2}\right)$ LMO complexities to obtain an $\epsilon$-approximate solution, improving the $O\left( 1/\epsilon^{4}\right)$ SFO and $O\left( 1/\epsilon^{2}\right)$ LMO rates of classical SFW. \citet{yurtsever2019conditional} introduce a similar variant (SPIDER-FW) by combining the ideas of SPIDER \citep{fang2018spider} and SFW, and show a superior complexity rate of $O\left( 1/\epsilon^{3}\right)$ for SFO, while preserving the same LMO complexity. \citet{hassani2020stochasticconditionalgradient} propose another variant (SFW++), which applies variance reduction by using an unbiased estimator of the gradient difference and demonstrates an SFO complexity of $O\left( 1/\epsilon^{3}\right)$. An alternative variant (1-SFW) with exactly one call to the SFO per iteration was presented in the work of \citet{zhang2020one}. However, the proposed algorithm exhibits worse computational complexities of $O\left( 1/\epsilon^{3}\right)$ for SFO and LMO. Building on the ideas of SPIDER, \citet{weber2022projection} suggest another variance-reduced variant (SPIDER-RFW) for the Riemannian setting, demonstrating an SFO complexity of $O\left( 1/\epsilon^{3}\right)$. \citet{nazykov2024stochastic} propose a unified framework of SFW methods, leading to the development of various new algorithms, including variance reduction alternatives. Some other works have explored the use of variance reduction in stochastic Frank-Wolfe variants for the \textit{non-convex finite-sum setting} \citep{reddi2016stochastic, yurtsever2019conditional, qu2018non, beznosikov2023sarah}.

\textbf{Heavy-tailed noise.} Stochastic gradients in deep learning are widely known to exhibit heavy-tailed noise \citep{zhang2020adaptive,gurbuzbalaban2021heavy,hodgkinson2021multiplicative,kunstner2023noise,ahn2023linear}. The heavy-tailed noise condition in SGD was first explored in the work of \citet{zhang2020adaptive}, where the authors provide convergence guarantees in expectation for SGD with clipping. \citet{gorbunov2020stochastic} show high probability bounds for SGD under the bounded variance assumption in the smooth convex case. Subsequent works by \citet{gorbunov2022clipped} and \citet{gorbunov2023high} extend the applicability of gradient clipping and moment-based analysis to variational inequality problems. Another work by \citet{mai2021stability} studies the stability and convergence properties of clipped SGD for convex and weakly convex functions with rapidly growing subgradients. \citet{cutkosky2021high} demonstrate that combining gradient clipping, momentum, and normalized gradient descent leads to high-probability convergence under the $p_{th}$ bounded moment assumption for general non-convex functions. Adaptive gradient methods have also been studied under the same assumption. 
Specifically, \citet{li2023highprobabilityanalysisnonconvex} derive high-probability convergence and generalization bounds for clipped SGD and its momentum and adaptive variants in the non-convex setting, and \citet{chezhegov2025clippingimprovesadamnormadagradnorm} similarly provide convergence guarantees for clipped Adam and AdaGrad under heavy-tailed noise in both convex and non-convex regimes.
\citet{nguyen2023improved} present an alternative analysis method for showing high-probability convergence in various clipped gradient methods and prove convergence gurantees for the convex and nonconvex setting. Later, \citet{hubler2024gradient} provide an in-expectation convergence of normalized SGD that does not require the specification of any algorithmic parameters. A noteworthy insight is uncovered by the recent work of \citet{schaipp2024sgd}, which reveals that clipped gradient methods implicitly perform a median estimation over iterations.

\section{Proofs of the theoretical results in Section~\ref{Section3}} \label{app:Section3}

\setcounter{theorem}{0}

\subsection{Proof of Theorem~\ref{th:Lion_as_SFW}}

\begin{theorem}(Lion as Stochastic FW)
Lion (Algorithm~\ref{alg:Lion}) is an instance of a Stochastic Frank-Wolfe (Algorithm~\ref{alg:Lion_Muon_as_FW}) when using parameters $\beta_{1,t} = \beta_1$, $\gamma_t = 1 - \beta_2$, $\eta_t^{\mathrm{Alg}~\ref{alg:Lion_Muon_as_FW}} = \lambda \eta_t^{\mathrm{Alg}~\ref{alg:Lion}}$, for all $t$, setting $\mathcal{C} = \{\bv \in \mathbb{R}^n: \| \bv\|_{\infty} \leq \frac{1}{\lambda} \} $, and letting $\langle \cdot, \cdot \rangle$ denote the Euclidean inner product.
\end{theorem}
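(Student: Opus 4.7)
The plan is to proceed by induction, simultaneously tracking three equalities between the iterates of Algorithms~\ref{alg:Lion} and~\ref{alg:Lion_Muon_as_FW}: the state variables $\bx_t$, the running momentum ($\bm_t$ vs.\ $\bg_t$), and the extrapolated direction ($\bc_t$ vs.\ $\hat{\bg}_t$). The parameter matching is not arbitrary, so I would first motivate it by reading off each correspondence. Setting $\gamma_t = 1-\beta_2$ makes the Algorithm~\ref{alg:Lion_Muon_as_FW} update
\[
\bg_t = (1-\gamma_t)\bg_{t-1} + \gamma_t \bar{\bg}_t = \beta_2 \bg_{t-1} + (1-\beta_2)\bar{\bg}_t,
\]
which is exactly the Lion momentum recursion, so under the inductive hypothesis $\bm_{t-1}=\bg_{t-1}$ we immediately get $\bm_t = \bg_t$.

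Next, I would verify that $\hat{\bg}_t$ matches $\bc_t$. With $\beta_{1,t}=\beta_1$ and $\gamma_t=1-\beta_2$, the extrapolation weight becomes $\beta_{1,t}/(1-\gamma_t) = \beta_1/\beta_2$, so
\[
\hat{\bg}_t = \tfrac{\beta_1}{\beta_2}\bg_t + \bigl(1-\tfrac{\beta_1}{\beta_2}\bigr)\bar{\bg}_t.
\]
Substituting the recursion $\bg_t = \beta_2\bg_{t-1}+(1-\beta_2)\bar{\bg}_t$ and simplifying, the $\bar{\bg}_t$ coefficients combine to $1-\beta_1$, giving $\hat{\bg}_t = \beta_1 \bg_{t-1} + (1-\beta_1)\bar{\bg}_t$. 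Under the inductive hypothesis $\bg_{t-1}=\bm_{t-1}$ this coincides with $\bc_t = \beta_1 \bm_{t-1} + (1-\beta_1)\bar{\bg}_t$. This is the one genuine bit of algebra in the proof; once it is in place, the rest is mechanical.

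Third, I would resolve the linear minimization oracle over $\mathcal{C} = \{\bv : \|\bv\|_\infty \leq 1/\lambda\}$. Since the dual norm of $\ell_\infty$ is $\ell_1$, a standard calculation gives $\bu_t = \arg\min_{\|\bv\|_\infty \leq 1/\lambda}\langle \bv, \hat{\bg}_t\rangle = -\mathrm{sign}(\hat{\bg}_t)/\lambda = -\mathrm{sign}(\bc_t)/\lambda$. Plugging into the convex-combination step of Algorithm~\ref{alg:Lion_Muon_as_FW} with $\eta_t^{\mathrm{Alg}~\ref{alg:Lion_Muon_as_FW}} = \lambda\eta_t^{\mathrm{Alg}~\ref{alg:Lion}}$ yields
\[
\bx_{t+1}^{\mathrm{Alg}~\ref{alg:Lion_Muon_as_FW}} = (1-\lambda\eta_t^{\mathrm{Alg}~\ref{alg:Lion}})\bx_t - \eta_t^{\mathrm{Alg}~\ref{alg:Lion}}\mathrm{sign}(\bc_t) = \bx_t - \eta_t^{\mathrm{Alg}~\ref{alg:Lion}}\bigl(\mathrm{sign}(\bc_t)+\lambda\bx_t\bigr),
\]
which is the Lion update, completing the inductive step.

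The base case is trivial: both algorithms initialize the momentum at $0$ and the iterate at the same $\bx_1\in\mathcal{C}$. I do not expect a real obstacle here; the only conceptual point is recognizing that the seemingly ad hoc extrapolation coefficient $\beta_{1,t}/(1-\gamma_t)$ in Algorithm~\ref{alg:Lion_Muon_as_FW} is precisely engineered so that, after one substitution of the momentum recursion, $\hat{\bg}_t$ becomes a convex combination of $\bg_{t-1}$ and $\bar{\bg}_t$ with weights $(\beta_1, 1-\beta_1)$ — thereby reproducing Lion's one-step-offset blend $\bc_t$. Everything else reduces to the closed-form LMO over an $\ell_\infty$ ball and bookkeeping of step sizes.
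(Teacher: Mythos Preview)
Your proposal is correct and follows essentially the same approach as the paper: an induction tracking the three equalities $\bx_t^{\mathrm{Alg}~\ref{alg:Lion}}=\bx_t^{\mathrm{Alg}~\ref{alg:Lion_Muon_as_FW}}$, $\bm_t=\bg_t$, and $\bc_t=\hat{\bg}_t$, with the same key algebraic identity (you expand $\hat{\bg}_t$ via the $\bg_t$ recursion to recover $\beta_1\bg_{t-1}+(1-\beta_1)\bar{\bg}_t$, whereas the paper runs the same computation in reverse starting from $\bc_{t+1}$), and the same closed-form LMO $\bu_t=-\mathrm{sign}(\hat{\bg}_t)/\lambda$ over the $\ell_\infty$ ball.
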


\begin{proof}
We show by induction that for all $t \geq 0$ the updates obtained by Algorithm \ref{alg:Lion} are equivalent to the updates of an instance of Algorithm \ref{alg:Lion_Muon_as_FW}. Let $\bx_t^{\mathrm{Alg}~\ref{alg:Lion}}$, $\eta_t^{\mathrm{Alg}~\ref{alg:Lion}}$, 
$\bar{\bg}_t^{\mathrm{Alg}~\ref{alg:Lion}}$,
and $\bx_t^{\mathrm{Alg}~\ref{alg:Lion_Muon_as_FW}}$, $\eta_t^{\mathrm{Alg}~\ref{alg:Lion_Muon_as_FW}}$,
and $\bar{\bg}_t^{\mathrm{Alg}~\ref{alg:Lion_Muon_as_FW}}$,
denote the iterates, step-sizes and stochastic gradients used at step $t$ of Algorithm \ref{alg:Lion} and Algorithm $\ref{alg:Lion_Muon_as_FW}$, respectively. 
Then, by setting $\beta_{1,t} \gets \beta_1$, $\gamma_t \gets 1-\beta_2$, $\eta_t^{\mathrm{Alg}~\ref{alg:Lion_Muon_as_FW}} \gets \lambda \eta_t^{\mathrm{Alg}~\ref{alg:Lion}}$, for all $t$ and letting $\mathcal{C} = \{\bv: \| \bv\|_{\infty} \leq \frac{1}{\lambda} \} $, we show that the following equations are maintained for all $t \geq 0$.
\begin{align}
     \bx_{t+1}^{\mathrm{Alg}~\ref{alg:Lion}} &= \bx_{t+1}^{\mathrm{Alg}~\ref{alg:Lion_Muon_as_FW}} \label{eq: x equivalency} \\
     \mathbf{m}_t &= \bg_t \label{eq: m equivalency} \\
     \bc_{t+1} &= \hat{\bg}_{t+1} \label{eq: c equivalency} 
\end{align}
Note that the objects on the left hand-side of the equalities correspond to Algorithm \ref{alg:Lion} and the objects on the right hand-side correspond to Algorithm \ref{alg:Lion_Muon_as_FW}. For $t=0$, we have that by initialization of the algorithms $\mathbf{m}_0 = \bg_0 = 0$ and $\bx_{1}^{\mathrm{Alg}~\ref{alg:Lion}} = \bx_{1}^{\mathrm{Alg}~\ref{alg:Lion_Muon_as_FW}}$. Furthermore, we observe that for all $t \geq 0$ we have (\ref{eq: x equivalency}),(\ref{eq: m equivalency}) $\Rightarrow$ (\ref{eq: c equivalency}). That is, because if $\mathbf{m}_t = \bg_t $, $\bx_{t+1}^{\mathrm{Alg}~\ref{alg:Lion}} = \bx_{t+1}^{\mathrm{Alg}~\ref{alg:Lion_Muon_as_FW}}$ and using the parameter choices from before, we have

\begin{align*}
    \bc_{t+1} &= \beta_1 \mathbf{m}_{t} + (1-\beta_1) \bar{\bg}_{t+1}^{\mathrm{Alg}~\ref{alg:Lion}} \\
    &\overset{(i)}{=} \beta_1 \bg_{t} + (1-\beta_1) \nabla f \left( \bx_{t+1}^{\mathrm{Alg}~\ref{alg:Lion}}; \Xi_{t+1} \right) \\
    &\overset{(ii)}{=} \beta_1 \bg_t + (1-\beta_1) \nabla f \left( \bx_{t+1}^{\mathrm{Alg}~\ref{alg:Lion_Muon_as_FW}}; \Xi_{t+1} \right) \\
    &= \beta_1 \bg_t + (1-\beta_1) \bar{\bg}_{t+1}^{\mathrm{Alg}~\ref{alg:Lion_Muon_as_FW}} \\
    &= \beta_1 \bg_t + \frac{\beta_1}{\beta_2}(1-\beta_2) \bar{\bg}_{t+1}^{\mathrm{Alg}~\ref{alg:Lion_Muon_as_FW}}  + \left( 1 - \frac{\beta_1}{\beta_2} \right)\bar{\bg}_{t+1}^{\mathrm{Alg}~\ref{alg:Lion_Muon_as_FW}}  \\
    &= \frac{\beta_1}{\beta_2} \left( \beta_2 \bg_t + (1-\beta_2) \bar{\bg}_{t+1}^{\mathrm{Alg}~\ref{alg:Lion_Muon_as_FW}}  \right)
    + \left( 1 - \frac{\beta_1}{\beta_2} \right) \bar{\bg}_{t+1}^{\mathrm{Alg}~\ref{alg:Lion_Muon_as_FW}}  \\
    &= \frac{\beta_1}{\beta_2} \bg_{t+1} + \left( 1 - \frac{\beta_1}{\beta_2} \right) \bar{\bg}_{t+1}^{\mathrm{Alg}~\ref{alg:Lion_Muon_as_FW}}  \\
    &= \hat{\bg}_{t+1},
\end{align*}
where (i) is by $\mathbf{m}_t=\bg_t$ and (ii) is by $\bx_{t+1}^{\mathrm{Alg}~\ref{alg:Lion}}= \bx_{t+1}^{\mathrm{Alg}~\ref{alg:Lion_Muon_as_FW}}$.

Now, we can show that (\ref{eq: x equivalency}) and (\ref{eq: m equivalency}) hold for $t > 0$ using induction. Assume that they hold up to some $t \geq 0$. Then, it follows that (\ref{eq: c equivalency}) also holds up to $t \geq 0$ and we have
\begin{align*}
    &\bx_{t+2}^{\mathrm{Alg}~\ref{alg:Lion}} = \bx_{t+1}^{\mathrm{Alg}~\ref{alg:Lion}} - \eta_{t+1}^{\mathrm{Alg}~\ref{alg:Lion}} \left( \text{sign}(\bc_{t+1}) + \lambda \bx_{t+1}^{\mathrm{Alg}~\ref{alg:Lion}} \right) = \bx_{t+1}^{\mathrm{Alg}~\ref{alg:Lion_Muon_as_FW}} + \eta_{t+1}^{\mathrm{Alg}~\ref{alg:Lion_Muon_as_FW}} \left( \bu_{t+1}  - \bx_{t+1}^{\mathrm{Alg}~\ref{alg:Lion_Muon_as_FW}} \right) = \bx_{t+2}^{\mathrm{Alg}~\ref{alg:Lion_Muon_as_FW}}, 
\end{align*}
and
\begin{align*}
    \mathbf{m}_{t+1} &= \beta_2 \mathbf{m}_t + (1-\beta_2) \bar{\bg}_{t+1}^{\mathrm{Alg}~\ref{alg:Lion}} \\
    &= \beta_2 \mathbf{m}_t + (1-\beta_2)  \nabla f \left( \bx_{t+1}^{\mathrm{Alg}~\ref{alg:Lion}}; \Xi_{t+1} \right) \\
    &= \beta_2 \bg_t + (1-\beta_2)  \nabla f \left( \bx_{t+1}^{\mathrm{Alg}~\ref{alg:Lion_Muon_as_FW}}; \Xi_{t+1} \right)  \\
    &= \beta_2 \bg_t + (1-\beta_2)
    \bar{\bg}_{t+1}^{\mathrm{Alg}~\ref{alg:Lion_Muon_as_FW}}  \\
    &= \bg_{t+1}.
\end{align*}
Finally, since (\ref{eq: x equivalency}) and (\ref{eq: m equivalency}) hold for $t+1$, we obtain that (\ref{eq: c equivalency}) also holds for $t+1$. This completes the proof.
\end{proof}

\subsection{Proof of Theorem~\ref{th: Muon_as_SFW}}

\begin{theorem}(Muon as Stochastic FW) 
Muon (Algorithm~\ref{alg:Muon}) is an instance of a Stochastic Frank-Wolfe (Algorithm~\ref{alg:Lion_Muon_as_FW}) when using the parameters $\beta_{1,t} = \mu$, $\gamma_t = 1-\mu$, $\eta_t^{\mathrm{Alg}~\ref{alg:Lion_Muon_as_FW}} = \lambda \eta_t^{\mathrm{Alg}~\ref{alg:Muon}}$, for all $t$, setting $\mathcal{C} = \{ \mathbf{A} \in \mathbb{R}^{m \times n} : \| \mathbf{A} \|_2 \leq \frac{1}{\lambda}\}$,  and letting $\langle \cdot, \cdot \rangle$ denote the Frobenius inner product.
\end{theorem}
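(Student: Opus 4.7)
The plan is to mirror the induction template used in the proof of Theorem~\ref{th:Lion_as_SFW}, but the argument simplifies because the chosen parameters make $\hat{\bg}_t$ collapse to $\bg_t$. Concretely, with $\beta_{1,t}=\mu$ and $\gamma_t=1-\mu$, the coefficient $\frac{\beta_{1,t}}{1-\gamma_t}$ equals $1$, so Line~6 of Algorithm~\ref{alg:Lion_Muon_as_FW} gives $\hat{\bg}_t=\bg_t$, and the linear minimization oracle is run on the momentum buffer alone. The first bookkeeping step is therefore to write out the parameter substitutions and record that the Stochastic FW momentum recursion becomes $\bg_t=\mu\bg_{t-1}+(1-\mu)\bar{\bg}_t$, while Muon maintains $\mathbf{B}_t=\mu\mathbf{B}_{t-1}+\mathbf{G}_t$.

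The key technical observation is that these two recursions differ only by a fixed positive scalar: a one-step induction from $\bg_0=\mathbf{B}_0=0$ shows $\bg_t=(1-\mu)\mathbf{B}_t$ for all $t\geq 1$. Next I would invoke the identity highlighted in the sketch, namely
\begin{equation*}
\arg\min_{\mathbf{A}\in\mathcal{O}_{m\times n}}\|\mathbf{A}-\mathbf{G}\|_F \;=\; \arg\max_{\|\mathbf{A}\|_2\leq 1}\langle \mathbf{A},\mathbf{G}\rangle,
\end{equation*}
which produces the polar factor $\mathbf{U}\mathbf{V}^\top$ when $\mathbf{G}=\mathbf{U}\Sigma\mathbf{V}^\top$. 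Since this factor is invariant under multiplication of $\mathbf{G}$ by any positive scalar, the relation $\bg_t=(1-\mu)\mathbf{B}_t$ implies that $\mathbf{B}_t$ and $\hat{\bg}_t$ share the same polar factor $\mathbf{O}_t$. Consequently, the linear minimization oracle in Algorithm~\ref{alg:Lion_Muon_as_FW} over $\mathcal{C}=\{\mathbf{A}:\|\mathbf{A}\|_2\leq 1/\lambda\}$ returns $\bu_t=-\tfrac{1}{\lambda}\mathbf{O}_t$, which ties the two algorithms together at the oracle step.

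The remaining step is the inductive identification of the iterates. Assuming $\mathbf{X}_t^{\mathrm{Alg}~\ref{alg:Muon}}=\mathbf{X}_t^{\mathrm{Alg}~\ref{alg:Lion_Muon_as_FW}}$ and $\bg_t=(1-\mu)\mathbf{B}_t$, I would substitute $\bu_t=-\tfrac{1}{\lambda}\mathbf{O}_t$ and $\eta_t^{\mathrm{Alg}~\ref{alg:Lion_Muon_as_FW}}=\lambda\eta_t^{\mathrm{Alg}~\ref{alg:Muon}}$ into the convex-combination update to obtain
\begin{equation*}
\mathbf{X}_{t+1}^{\mathrm{Alg}~\ref{alg:Lion_Muon_as_FW}} \;=\; (1-\eta_t^{\mathrm{Alg}~\ref{alg:Lion_Muon_as_FW}})\mathbf{X}_t^{\mathrm{Alg}~\ref{alg:Lion_Muon_as_FW}} + \eta_t^{\mathrm{Alg}~\ref{alg:Lion_Muon_as_FW}}\bu_t \;=\; \mathbf{X}_t^{\mathrm{Alg}~\ref{alg:Muon}} - \eta_t^{\mathrm{Alg}~\ref{alg:Muon}}\bigl(\mathbf{O}_t+\lambda\mathbf{X}_t^{\mathrm{Alg}~\ref{alg:Muon}}\bigr),
\end{equation*}
which is precisely Line~7 of Algorithm~\ref{alg:Muon}. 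Since the stochastic gradient queries occur at the same iterate by induction, $\mathbf{G}_{t+1}$ in Muon equals $\bar{\bg}_{t+1}$ in Algorithm~\ref{alg:Lion_Muon_as_FW}, so the momentum scaling relation $\bg_{t+1}=(1-\mu)\mathbf{B}_{t+1}$ propagates and closes the induction.

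The only delicate point, and the main conceptual obstacle, is correctly handling the mismatch between $\mathbf{B}_t=\mu\mathbf{B}_{t-1}+\mathbf{G}_t$ (unscaled gradient) and $\bg_t=\mu\bg_{t-1}+(1-\mu)\bar{\bg}_t$ (convex combination). One must realize that this scalar discrepancy is absorbed by the scale invariance of the polar factor, so the two schemes produce identical orthogonalized directions despite having differently normalized momentum states; everything else is a routine parameter bookkeeping that parallels Theorem~\ref{th:Lion_as_SFW}.
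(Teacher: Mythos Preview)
Your proposal is correct and follows essentially the same approach as the paper: you identify $\hat{\bg}_t=\bg_t$ under the parameter choice, establish the scalar relation $\bg_t=(1-\mu)\mathbf{B}_t$ (the paper writes this as $\mathbf{B}_t=\bg_t/(1-\mu)$), use the orthogonal Procrustes/LMO identity to get $\bu_t=-\tfrac{1}{\lambda}\mathbf{O}_t$, and close the joint induction on iterates and momentum. The only cosmetic difference is that you explicitly invoke scale invariance of the polar factor, whereas the paper absorbs the $1/(1-\mu)$ factor by linearity of the inner product inside the $\arg\min$.
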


\begin{proof}
We show by induction that for all $t$ the updates obtained by Algorithm \ref{alg:Muon} are equivalent to the updates of an instance of Algorithm \ref{alg:Lion_Muon_as_FW}. Let $\eta_t^{\mathrm{Alg}~\ref{alg:Muon}}$ and $\eta_t^{\mathrm{Alg}~\ref{alg:Lion_Muon_as_FW}}$ denote the step-sizes used at step $t$ of Algorithm \ref{alg:Muon} and Algorithm \ref{alg:Lion_Muon_as_FW}, respectively. Then, by setting $\beta_{1,t} \gets \mu$, $\gamma_t \gets 1-\mu$, $\eta_t^{\mathrm{Alg}~\ref{alg:Lion_Muon_as_FW}} \gets \lambda \eta_t^{\mathrm{Alg}~\ref{alg:Muon}}$, for all $t$, $\mathcal{C} = \{ \mathbf{A} \in \mathbb{R}^{m \times n} : \| \mathbf{A} \|_2 \leq \frac{1}{\lambda}\}$ and letting $\langle \cdot, \cdot \rangle$ denote the Frobenius inner product, we show that the following equations are maintained for all $t \geq 0$. 
\begin{align}
    \mathbf{B}_t &= \frac{\bg_t}{1-\mu} \label{eq: B equivalency Muon} \\
    \mathbf{O}_t &= - \lambda \bu_t \label{eq: O equivalency Muon} \\
    \mathbf{X}_{t+1} &= \bx_{t+1} \label{eq: X equivalency Muon}
\end{align}
Note that with these parameter choices $\hat{\bg}_t = \bg_t$, for all $t \geq 0$ in Algorithm \ref{alg:Lion_Muon_as_FW}. Here, the objects on the left hand-side of the equality correspond to Algorithm \ref{alg:Muon} and the objects on the right hand-side correspond to Algorithm \ref{alg:Lion_Muon_as_FW}. For $t=0$, we have that by initialization of the algorithms $\mathbf{B}_0 = \frac{\bg_0}{1 - \mu} = 0$ and $\mathbf{X}_1 = \bx_1$. Furthermore, for any $t \geq 0$, we have \eqref{eq: B equivalency Muon} $\Rightarrow$ \eqref{eq: O equivalency Muon}. That is, because if $\mathbf{B}_t = \frac{\bg_t}{1 - \mu}$, then 
\begin{align*}
    \mathbf{O}_t &= \arg\min_{  \mathbf{A} \in \mathcal{O}_{m \times n}}\|\mathbf{A} - \mathbf{B}_t\|_F \\
    &= - \lambda \cdot \arg\min_{ \| \mathbf{A} \|_2 \leq \frac{1}{\lambda}} \langle \mathbf{A} , \mathbf{B}_t \rangle \\
    &= - \lambda \cdot \arg\min_{ \| \mathbf{A} \|_2 \leq \frac{1}{\lambda}} \langle \mathbf{A} , \frac{\bg_t}{1 - \mu} \rangle \\
    &= - \lambda \cdot \arg\min_{ \| \mathbf{A} \|_2 \leq \frac{1}{\lambda}} \langle \mathbf{A} , \bg_t \rangle \\
    &= - \lambda \bu_t .
\end{align*}
Now, we can show that \eqref{eq: B equivalency Muon} and \eqref{eq: X equivalency Muon} hold for $t > 0$ by induction. Assume that they hold up to some $t \geq 0$. Then, we have
\begin{align*}
    \mathbf{B}_{t+1} &= \mu \mathbf{B}_{t} + \mathbf{G}_{t+1} \\
     &= \mu \mathbf{B}_{t} +  \nabla f \left( \mathbf{X}_{t+1}; \Xi_{t+1} \right) \\
    &= \mu \frac{\bg_{t}}{1 - \mu} +  \nabla f \left( \bx_{t+1}; \Xi_{t+1} \right) \\
     &= \frac{\bg_{t+1}}{1 - \mu} .
\end{align*}
Therefore, we also have that \eqref{eq: O equivalency Muon} holds for $t+1$, i.e. $\mathbf{O}_{t+1} = -\lambda \bu_{t+1}$. Finally, 
\begin{align*}
    \mathbf{X}_{t+2} &= \mathbf{X}_{t+1} - \eta_{t+1}^{\mathrm{Alg}~\ref{alg:Muon}} \left( \mathbf{O}_{t+1} + \lambda \mathbf{X}_{t+1} \right) \\
    &= \bx_{t+1} - \frac{\eta_{t+1}^{\mathrm{Alg}~\ref{alg:Lion_Muon_as_FW}}}{\lambda} \left( -\lambda \bu_{t+1} + \lambda \bx_{t+1} \right) \\
    &= \left( 1 - \eta_{t+1}^{\mathrm{Alg}~\ref{alg:Lion_Muon_as_FW}} \right) \bx_{t+1} + \eta_{t+1} \bu_{t+1},
\end{align*}
which implies that \eqref{eq: X equivalency Muon} holds for $t+1$. This completes the proof.
\end{proof}

\subsection{Muon with Nesterov-momentum}

In this section, we show that Muon with Nesterov momentum (Algorithm~\ref{alg:Muon_nesterov}) is also an instance of our Stochastic Frank-Wolfe formulation (Algorithm~\ref{alg:Lion_Muon_as_FW}).

\setcounter{theorem}{6}

\begin{algorithm}[H]
\caption{Muon with Nesterov momentum \citep{jordan2024muon}} \label{alg:Muon_nesterov}
    \begin{algorithmic} 
    \small 
\STATE \textbf{Required:} Momentum parameter $\mu$, step size $\{\eta_t\}$, weight decay parameter $\lambda$. 
\STATE \textbf{Initialize:} $\mathbf{B}_0 = 0$ and $\mathbf{X}_1 \in \mathcal{C}$.
\FOR{$t=1,2,\dots$}
\STATE Sample $\Xi_t \sim \mathcal{D}$.
\STATE Compute $\mathbf{G}_{t} = \nabla f(\mathbf{X}_t; \Xi_t ) \in \mathbb{R}^{m \times n}
$. 
\STATE Update $\mathbf{B}_t = \mu \mathbf{B}_{t-1} + \mathbf{G}_{t}$.
\STATE Update $\overline{\mathbf{B}}_t = \mu \mathbf{B}_{t} + \mathbf{G}_{t}$.
\STATE Update $\mathbf{O}_t = \arg\min_{\mathbf{A} \in \mathcal{O}_{m \times n}}\|\mathbf{A} - \overline{\mathbf{B}}_t\|_F$
\STATE Update $\mathbf{X}_{t+1} = \mathbf{X}_t - \eta_t \left( \mathbf{O}_t + \lambda \mathbf{X}_t \right)$.
\ENDFOR 
\end{algorithmic}
\end{algorithm}

\begin{theorem}(Muon with Nesterov momentum as Stochastic FW) 
Muon with Nesterov momentum (Algorithm~\ref{alg:Muon_nesterov}) is an instance of a Stochastic Frank-Wolfe (Algorithm~\ref{alg:Lion_Muon_as_FW}) when using the parameters $\beta_{1,t} = \mu^2$, $\gamma_t = 1-\mu$, $\eta_t^{\mathrm{Alg}~\ref{alg:Lion_Muon_as_FW}} = \lambda \eta_t^{\mathrm{Alg}~\ref{alg:Muon_nesterov}}$, for all $t$, setting $\mathcal{C} = \{ \mathbf{A} \in \mathbb{R}^{m \times n} : \| \mathbf{A} \|_2 \leq \frac{1}{\lambda}\}$,  and letting $\langle \cdot, \cdot \rangle$ denote the Frobenius inner product.

\begin{proof}
Let $\eta_t^{\mathrm{Alg}~\ref{alg:Muon_nesterov}}$ and $\eta_t^{\mathrm{Alg}~\ref{alg:Lion_Muon_as_FW}}$ denote the step-sizes used at step $t$ of Algorithm \ref{alg:Muon_nesterov} and Algorithm \ref{alg:Lion_Muon_as_FW}, respectively. Then, by setting $\beta_{1,t} \gets \mu^2$, $\gamma_t \gets 1-\mu$, $\eta_t^{\mathrm{Alg}~\ref{alg:Lion_Muon_as_FW}} \gets \lambda \eta_t^{\mathrm{Alg}~\ref{alg:Muon_nesterov}}$, for all $t$, $\mathcal{C} = \{ \mathbf{A} \in \mathbb{R}^{m \times n} : \| \mathbf{A} \|_2 \leq \frac{1}{\lambda}\}$ and letting $\langle \cdot, \cdot \rangle$ denote the Frobenius inner product, we show by induction that the following equations are maintained for all $t \geq 0$.    
\begin{align}
    \mathbf{B}_t &= \frac{\bg_t}{1-\mu}  \\
    \overline{\mathbf{B}}_t &= \frac{\hat{\bg}_t}{1-\mu}  \\
    \mathbf{O}_t &= - \lambda \bu_t \\
    \mathbf{X}_{t+1} &= \bx_{t+1} 
\end{align}
The rest of the proof proceeds analogously to the proof of Theorem~\ref{th: Muon_as_SFW}.
\end{proof}

\end{theorem}

\subsection{Proof of Theorem~\ref{th: Lion_Muon_as_FW}}

\setcounter{theorem}{2}

\begin{theorem}
Set $\eta_t = \frac{1}{D\sqrt{T}}$ and the batch size $m_t=m$, for all $t \geq 1$. Let $\bx_a$ be chosen uniformly at random from $\{ \bx_t \}_{t=1}^T$.
Then, under Assumptions~\ref{assume:LG}-a and ~\ref{assume:v} with $p=2$ (i.e., bounded variance), Algorithm~\ref{alg:Lion_Muon_as_FW} satisfies 
\begin{equation}
    \E[ \mathcal{G}(\bx_a) ] = O\left(\frac{ D \left( F(\bx_1) - F(\bx_*) + L(1/2+\beta/\gamma)  \right)}{T^{1/2}} 
          + \frac{D \sigma}{\sqrt{m}} \left( \frac{\beta}{\gamma (1-\gamma)} + 1\right)
      \right),
\end{equation}
for any $\beta \in [0, 1-\gamma]$ and $\gamma \in \left(0,1\right)$.
\end{theorem}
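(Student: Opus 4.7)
The plan is to follow the classical non-convex stochastic Frank-Wolfe template: combine the $L$-smoothness descent lemma with the optimality of the linear minimization oracle to obtain a per-step bound of the form $F(\bx_{t+1}) \le F(\bx_t) - \eta_t \mathcal{G}(\bx_t) + (\text{error})$, and then separately control the momentum-induced gradient estimation error $\|\hat{\bg}_t - \nabla F(\bx_t)\|$.

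First I would apply Assumption~\ref{assume:LG}-a to the averaging step $\bx_{t+1}=(1-\eta_t)\bx_t+\eta_t\bu_t$, using $\|\bu_t-\bx_t\|\le D$, to obtain
\[
F(\bx_{t+1}) \;\le\; F(\bx_t) + \eta_t\langle \nabla F(\bx_t),\,\bu_t-\bx_t\rangle + \tfrac{L\eta_t^2 D^2}{2}.
\]
The optimality of $\bu_t$ against $\hat{\bg}_t$, together with $\bv_t^\star\in\arg\min_{\bv\in\mathcal{C}}\langle\bv,\nabla F(\bx_t)\rangle$ and Cauchy--Schwarz with $\|\bv_t^\star-\bu_t\|\le D$, yields
\[
\langle \nabla F(\bx_t),\,\bu_t-\bx_t\rangle \;\le\; -\mathcal{G}(\bx_t) + D\,\|\hat{\bg}_t-\nabla F(\bx_t)\|.
\]
Plugging this back, summing from $t=1$ to $T$, taking expectations, and substituting $\eta_t=1/(D\sqrt{T})$ reduces the theorem to bounding $\tfrac{1}{T}\sum_{t=1}^T\E\|\hat{\bg}_t-\nabla F(\bx_t)\|$ by the desired order.

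The main technical obstacle is controlling this momentum error. I would work from the identity
\[
\bg_t-\nabla F(\bx_t) \;=\; (1-\gamma)\bigl[\bg_{t-1}-\nabla F(\bx_{t-1})\bigr] + (1-\gamma)\bigl[\nabla F(\bx_{t-1})-\nabla F(\bx_t)\bigr] + \gamma\bigl[\bar{\bg}_t-\nabla F(\bx_t)\bigr],
\]
combined with $\|\nabla F(\bx_{t-1})-\nabla F(\bx_t)\|\le L\eta D$ (from Assumption~\ref{assume:LG}-a and $\|\bx_t-\bx_{t-1}\|\le \eta D$) and $\E\|\bar{\bg}_t-\nabla F(\bx_t)\|\le\sigma/\sqrt{m}$ (from Assumption~\ref{assume:v} with mini-batch $m$ and Jensen). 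Triangle inequality and expectation then yield the linear contraction $\E\|\bg_t-\nabla F(\bx_t)\|\le(1-\gamma)\E\|\bg_{t-1}-\nabla F(\bx_{t-1})\|+(1-\gamma)L\eta D+\gamma\sigma/\sqrt{m}$, which unrolls to a steady-state bound $\E\|\bg_t-\nabla F(\bx_t)\| = O(L\eta D/\gamma+\sigma/\sqrt{m})$ plus an exponentially decaying initial term. Distributing this through $\hat{\bg}_t=\tfrac{\beta}{1-\gamma}\bg_t+(1-\tfrac{\beta}{1-\gamma})\bar{\bg}_t$ and applying the triangle inequality once more produces the claimed $L\beta/\gamma$ factor on $1/\sqrt{T}$ (from the displacement bias amplified by the momentum filter) and the coefficient of $\sigma/\sqrt{m}$ (from blending the momentum noise with the fresh stochastic gradient through the convex weight $\beta/(1-\gamma)$).

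The delicate part is the bookkeeping in this last step: one must track how the $1/\gamma$ from the geometric sum of the momentum filter combines with the $\beta/(1-\gamma)$ weight used to mix momentum and fresh gradient to produce the stated $\beta/(\gamma(1-\gamma))$ noise coefficient, and how the $(1-\gamma)^t$ initial-error tail (originating from $\bg_0=0$ and the absence of a bounded-gradient hypothesis) averages to an $O(1/(\gamma T))$ contribution that can be absorbed into the leading $1/\sqrt{T}$ term. Once these pieces are assembled and $\eta_t=1/(D\sqrt{T})$ is plugged in, the stated rate for $\E[\mathcal{G}(\bx_a)]$ follows from the bound in the preceding paragraph.
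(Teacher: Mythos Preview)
Your proposal is correct and follows essentially the same approach as the paper: the same descent-plus-LMO-optimality argument to reach a per-step inequality involving $\|\hat{\bg}_t-\nabla F(\bx_t)\|$, the same decomposition of $\hat{\bg}_t$ into the momentum part $\tfrac{\beta}{1-\gamma}(\bg_t-\nabla F(\bx_t))$ and the fresh-gradient part, and the same linear-contraction recursion for $\E\|\bg_t-\nabla F(\bx_t)\|$ driven by the $L$-Lipschitz displacement bound and the mini-batch variance. The only organizational difference is that the paper packages the function-value and error recursions into a single potential $\Psi_t=\E[F(\bx_t)]+C\,\E[\|\epsilon_t\|]$ (with $C=\tfrac{\beta}{\gamma(1-\gamma)\sqrt{T}}$ chosen so the cross-term telescopes), whereas you propose to unroll the error recursion to its steady state and then substitute; both routes produce the same bound, including the $\|\nabla F(\bx_1)\|$-dependent initial term that is absorbed into the $O(1/\sqrt{T})$ part.
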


\begin{proof}
We recall the Frank-Wolfe gap: 
\begin{align*}
    \mathcal{G}(\bx) = \max_{\bv \in \mathcal{C}} \langle \bv - \bx , - \nabla F(\bx)\rangle.
\end{align*}
In the following, we denote the point $\hat{\bv}_t$ as
\begin{align*}
    \hat{\bv}_t := \arg \max_{\bv \in \mathcal{C}} \langle \bv, - \nabla F(\bx_t) \rangle
\end{align*}
for some fixed time $t \geq 1$. We also denote $\nabla f(\bx_t; \Xi_t ) := \frac{1}{m_t}  \sum_{i=1}^{m_t} \nabla_{\bx} f \left( \bx_t ; \xi_t^i \right) $, the mini-batch of stochastic gradients at $t$ for brevity, where $\Xi_t$ denotes the randomness at $t$.

On the other hand, we have
\begin{align}
    F \left( \bx_{t+1} \right)
    & = F \left( \bx_{t} + \eta_t ( \bu_t - \bx_t  ) \right) \notag \\
    & \leq F \left( \bx_{t} \right) + \eta_t \langle \nabla F \left( \bx_{t} \right) , \bu_t - \bx_t \rangle + \frac{\eta_t^2 L}{2} \| \bu_t - \bx_t\|^2 \notag 
    \\
    & \leq  F \left( \bx_{t} \right) + \eta_t \langle \nabla F \left( \bx_{t} \right) , \bu_t - \bx_t \rangle + \frac{L}{2} \eta_t^2 D^2 \notag \\
    & = F \left( \bx_{t} \right) + \eta_t \langle \hat{\bg}_t , \bu_t - \bx_t \rangle + \eta_t \langle \nabla F \left( \bx_{t} \right) - \hat{\bg}_t , \bu_t - \bx_t \rangle
    + \frac{L}{2} \eta_t^2 D^2 \notag
    \\    & \leq  F \left( \bx_{t} \right) + \eta_t \langle \hat{\bg}_t , \hat{\bv}_t - \bx_t \rangle + \eta_t \langle \nabla F \left( \bx_{t} \right) - \hat{\bg}_t , \bu_t - \bx_t \rangle 
    + \frac{L}{2} \eta_t^2 D^2 \notag 
    \\
    & =  F \left( \bx_{t} \right) + \eta_t \langle \hat{\bg}_t , \hat{\bv}_t - \bx_t - \bu_t + \bx_t \rangle + \eta_t \langle \nabla F \left( \bx_{t} \right) , \bu_t - \bx_t \rangle 
    + \frac{L}{2} \eta_t^2 D^2 \notag \\
    & =  F \left( \bx_{t} \right) + \eta_t \langle \hat{\bg}_t , \hat{\bv}_t - \bu_t \rangle + \eta_t \langle \nabla F \left( \bx_{t} \right) , \bu_t - \hat{\bv}_t + \hat{\bv}_t - \bx_t \rangle 
    + \frac{L}{2} \eta_t^2 D^2 \notag \\
    & =  F \left( \bx_{t} \right) + \eta_t \langle \nabla F \left( \bx_{t} \right) , \hat{\bv}_t - \bx_t \rangle  + \eta_t \langle \nabla F \left( \bx_{t} \right) - \hat{\bg}_t , \bu_t - \hat{\bv}_t \rangle 
    + \frac{L}{2} \eta_t^2 D^2 \notag \\
    & \leq F \left( \bx_{t} \right) - \eta_t \mathcal{G}(\bx_t) + \eta_t D \| \nabla F \left( \bx_{t} \right) - \hat{\bg}_t  \| + \frac{L}{2} \eta_t^2 D^2, 
   \label{final eq of lion 1} 
\end{align}
where the first inequality is from the $L$-smoothness of $F$, the second inequality follows by the diameter $D$ of the set $\mathcal{C}$,
the third inequality follows from the optimality of $\bu_t$, and the fourth  inequality follows from the definition of the Frank-Wolfe gap, the Cauchy-Schwarz inequality and the diameter of $\mathcal{C}$.
Furthermore, 
\begin{align*}
    \| \nabla F \left( \bx_{t} \right) - \hat{\bg}_t  \|
    &=  \| \nabla F \left( \bx_{t} \right) - \bg_t + \bg_t - \hat{\bg}_t  \| \\
    &=  \left\| \nabla F \left( \bx_{t} \right) - \bg_t + \left( 1 -  \frac{\beta_{1,t}}{1-\gamma_t} \right) \left( \bg_t - \bar{\bg}_t \right)  \right\|  \\
    &=  \left\| \nabla F \left( \bx_{t} \right) - \bg_t + \left( 1 -  \frac{\beta_{1,t}}{1-\gamma_t} \right) \left( \bg_t - \nabla F \left( \bx_{t} \right)  +  \nabla F \left( \bx_{t} \right) - \bar{\bg}_t \right)  \right\|  \\
    &=  \left\|  \frac{\beta_{1,t}}{1-\gamma_t} \left( \nabla F \left( \bx_{t} \right) - \bg_t \right) + \left( 1 -  \frac{\beta_{1,t}}{1-\gamma_t} \right) \left( \nabla F \left( \bx_{t} \right) - \bar{\bg}_t \right)  \right\|  \\
    &\leq  \left| \frac{\beta_{1,t}}{1-\gamma_t} \right| \left\| \nabla F \left( \bx_{t} \right) - \bg_t  \right\| + \left| 1 -  \frac{\beta_{1,t}}{1-\gamma_t} \right| \left\|   \nabla F \left( \bx_{t} \right) - \bar{\bg}_t \right\|.
\end{align*}
Denote $\epsilon_t:= \bg_t - \nabla F(\bx_t)$.
Combining all the above, we obtain
\begin{align}
    & \E[ F \left( \bx_{t+1} \right) ] \notag
    \\ & \leq  \E[ F \left( \bx_{t} \right)] - \eta_t \E[ \mathcal{G}(\bx_t)] + \eta_t
    \left| \frac{\beta_{1,t}}{1-\gamma_t} \right|
     D \E[ \| \epsilon_t  \| ] + 
    \eta_t \left|1- \frac{\beta_{1,t}}{1-\gamma_t} \right| D 
    \underbrace{ \E\left[ \left \|  \nabla F(\bx_t) - \bar{\bg}_t \right  \|   \right]
    }_{:= \theta_t \, \leq \frac{\sigma}{\sqrt{m}_t} }
    + \frac{L}{2} \eta_t^2 D^2. \label{y3}
\end{align}
Then, from the update in Line 6 of Algorithm \ref{alg:Lion_Muon_as_FW} we obtain
\begin{align*}
    \underbrace{ \bg_t - \nabla F(\bx_t) }_{:=\epsilon_t} = \left(1- \gamma_t\right) \underbrace{ \left( \bg_{t-1} - \nabla F(\bx_{t-1}) \right) }_{:= \epsilon_{t-1}}  + \left(1-\gamma_t\right) ( \nabla F(\bx_{t-1}) - \nabla F( \bx_t )    ) + \gamma_t \left( \bar{\bg}_t - \nabla F( \bx_t ) \right). 
\end{align*}
By the triangle inequality and $L$-Lipschitz gradient assumption we have
\begin{align}
    \E[ \| \epsilon_t \| ]
    & \leq (1-\gamma_t) \E [  \| \epsilon_{t-1} \|   ] + 
    (1-\gamma_t) \E \left [ \left \| \nabla F(\bx_{t-1}) - \nabla F( \bx_t )    \right \| \right ]
    + \gamma_t \theta_t \notag
    \\ &    
    \leq (1-\gamma_t) \E [  \| \epsilon_{t-1} \|   ] + 
    (1-\gamma_t) \eta_{t-1} L \E \left [ \left \|   \bu_{t-1} - \bx_{t-1}    \right \| \right ]
    + \gamma_t \theta_t \notag
    \\ & 
    \leq (1-\gamma_t) \E [  \| \epsilon_{t-1} \|   ] + 
    (1-\gamma_t) \eta_{t-1} L D
    + \gamma_t \theta_t . \label{bound epsilon Lion}
\end{align}
Now define a potential function $\Psi_{t}:= \E[ F(\bx_{t}) ]
 + C_{t} \E[ \| \epsilon_t \| ]
$, where $\{C_t\}_{t\geq 1}$ is a sequence of positive numbers to be determined later. Using (\ref{y3}), (\ref{bound epsilon Lion}) and the definition of the potential function we obtain
\begin{align}
    \Psi_{t+1} & = \E[ F(\bx_{t+1}) ] + C_{t+1} \E[ \| \epsilon_{t+1} \| ] \notag
    \\ & 
    \leq \E[ F \left( \bx_{t} \right)] - \eta_t \E[ \mathcal{G}(\bx_t)] + \eta_t
    \left|\frac{\beta_{1,t}}{1-\gamma_t} \right| D \E[ \| \epsilon_t  \| ] + \eta_t \left|1- \frac{\beta_{1,t}}{1-\gamma_t} \right| D \theta_t
    + \frac{L}{2} \eta_t^2 D^2  \notag
    \\ &
    \quad + C_{t+1} \left[ (1-\gamma_{t+1}) \E [  \| \epsilon_{t} \|   ] + 
    (1-\gamma_{t+1}) \eta_{t} L D
    + \gamma_{t+1} \theta_{t+1}  \right] \notag
    \\ &
    = \E[ F \left( \bx_{t} \right)] - \eta_t \E[ \mathcal{G}(\bx_t)] + \left[ C_{t+1} (1-\gamma_{t+1})+\eta_t
    \left|\frac{\beta_{1,t}}{1-\gamma_t} \right| D  \right] \E[ \| \epsilon_t  \| ]  \notag
    \\ & 
    \quad + \eta_t \left|1- \frac{\beta_{1,t}}{1-\gamma_t} \right| D \theta_t + \frac{L}{2} \eta_t^2 D^2 + C_{t+1} \left[ (1-\gamma_{t+1}) \eta_{t} L D + \gamma_{t+1} \theta_{t+1} \right] . \label{bound Psi Lion}
\end{align}
We can specify the sequences $C_t, \gamma_t, \eta_t$ and $\beta_{1,t}$ such that for all $t$
\begin{align}
    C_{t+1} (1-\gamma_{t+1}) + \eta_t \left|\frac{\beta_{1,t}}{1-\gamma_t} \right| D   \leq C_t. \label{constraint C Lion}
\end{align}
Then, (\ref{bound Psi Lion}) can further be upper-bounded as
\begin{align*}
   \Psi_{t+1} \leq  \Psi_t - \eta_t \E[ \mathcal{G}(\bx_t)] + \eta_t \left | 1 -  \frac{\beta_{1,t}}{(1-\gamma_t)}  \right | D \theta_t + \frac{L}{2} \eta_t^2 D^2 + C_{t+1} \left[ (1-\gamma_{t+1}) \eta_{t} L D + \gamma_{t+1} \theta_{t+1} \right] .
\end{align*}
Rearranging and summing from $t=1, \ldots, T$ we obtain
\begin{align}
    &\sum_{t=1}^T \eta_t \E[ \mathcal{G}(\bx_t)] \notag \\
    & \leq \Psi_1 - \Psi_{T+1} + \sum_{t=1}^T \left( \eta_t \left | 1 -  \frac{\beta_{1,t}}{(1-\gamma_t)}  \right | D \theta_t + \frac{L}{2} \eta_t^2 D^2 + C_{t+1} \left[ (1-\gamma_{t+1}) \eta_{t} L D + \gamma_{t+1} \theta_{t+1} \right]  \right) \notag
    \\
    & \leq  F(\bx_1) + C_{1} \E[ \| \epsilon_1 \|] - F(\bx_*) \notag \\
    & \quad + 
    \sum_{t=1}^T \left( \eta_t \left | 1 -  \frac{\beta_{1,t}}{(1-\gamma_t)}  \right | D \theta_t + \frac{L}{2} \eta_t^2 D^2 + C_{t+1} \left[ (1-\gamma_{t+1}) \eta_{t} L D + \gamma_{t+1} \theta_{t+1} \right]  \right) \notag
    \\
    & =  F(\bx_1) - F(\bx_*) + C_{1} \E[ \| \epsilon_1 \|] \notag \\
    & \quad + 
    \sum_{t=1}^T \left( \eta_t \left | 1 -  \frac{\beta_{1,t}}{(1-\gamma_t)}  \right | D \theta_t + \frac{L}{2} \eta_t^2 D^2 + C_{t+1} \left[ (1-\gamma_{t+1}) \eta_{t} L D + \gamma_{t+1} \theta_{t+1} \right]  \right) . \label{bound FW gap Lion}
\end{align}
The sequence $\{ C_t\}_t$ can be chosen such that $C_t = C$, for all $t$. Then, the constraint (\ref{constraint C Lion}) is reduced to
\begin{align*}
    \frac{\eta_t D  \left| \frac{\beta_{1,t}}{1-\gamma_t} \right|}{ \gamma_{t+1}} \leq C,
\end{align*}
for all $t$. If we set $\eta_t = \frac{1}{D \sqrt{T}}$, $\beta_{1,t} = \beta$, $\gamma_t = \gamma$, with $\beta + \gamma \leq 1$, for all $t$, then we can choose
\begin{align*}
    C \gets \frac{\beta }{ \gamma (1-\gamma) \sqrt{T}} .
\end{align*}
Using these parameter choices and the bound $\theta_t \leq \frac{\sigma}{\sqrt{m_t}}$, from (\ref{bound FW gap Lion}) we get
\begin{align}
    \frac{\sqrt{T}}{D} \frac{1}{T} \sum_{t=1}^T \E[ \mathcal{G}(\bx_t)] 
    & \leq F(\bx_1) - F(\bx_*) + \frac{\beta }{ \gamma (1-\gamma) \sqrt{T}}  \E[ \| \epsilon_1 \|] + \left( \frac{1 - \gamma - \beta}{(1 - \gamma) \sqrt{T}} \right) \sum_{t=1}^T \frac{\sigma}{ \sqrt{m_{t}}} + \frac{L}{2} \notag 
    \\ & 
    \quad + L \frac{\beta}{\gamma} + \left( \frac{ \beta }{ (1 - \gamma) \sqrt{T}} \right) \sum_{t=1}^T \frac{\sigma}{ \sqrt{m_{t+1}}}. \label{13}
\end{align}
We further note that 
\begin{align}
\E[ \| \epsilon_1 \|] =
 \E[ \| \gamma \bar{g}_1 - \nabla F(\bx_1) \|   ] 
& \leq \gamma \E[ \| \bar{g}_1 - \nabla F(\bx_1)\| ] + (1-\gamma) \| \nabla F(\bx_1) \| \notag
\\ & \leq \gamma \frac{\sigma}{\sqrt{m_1}}  + (1-\gamma) \| \nabla F(\bx_1) \|. \label{14}
\end{align}
Combining \eqref{13} and \eqref{14}, we have
\begin{align*}
    \frac{\sqrt{T}}{D} \frac{1}{T} \sum_{t=1}^T \E[ \mathcal{G}(\bx_t)] 
    & \leq F(\bx_1) - F(\bx_*) + \frac{\beta }{ (1-\gamma) \sqrt{T}}
    \frac{\sigma}{\sqrt{m_1}} + \frac{\beta}{\gamma \sqrt{T}} \| \nabla F(\bx_1) \|
    \\ & \quad + \left( \frac{1 - \gamma - \beta}{(1 - \gamma) \sqrt{T}} \right) \sum_{t=1}^T \frac{\sigma}{ \sqrt{m_{t}}}  + \frac{L}{2} + L \frac{\beta}{\gamma} + \left( \frac{ \beta }{ (1 - \gamma) \sqrt{T}} \right) \sum_{t=1}^T \frac{\sigma}{ \sqrt{m_{t+1}}}. 
\end{align*}

Let $m_t = m$, for all $t \in [T]$. Then, assuming that the output $\bx_a$ is chosen uniformly at random from $\{ \bx_t \}_{t=1}^T$,  we have
\begin{align*}
    \E \left[ \mathcal{G}(\bx_a) \right] 
    \leq \frac{D}{\sqrt{T}} \left( F(\bx_1) - F(\bx_*)  
      + L \left( \frac{1}{2} + \frac{\beta}{\gamma}  \right)
      + \frac{\beta}{\gamma \sqrt{T}} \| \nabla F(\bx_1) \|
      \right)
      + \frac{D \sigma}{\sqrt{m}} \left( \frac{\beta}{\gamma (1-\gamma)} + 1\right).
\end{align*}
\end{proof}

\section{Proofs of the theoretical results in Section~\ref{Section4}} \label{app:Section4}

\subsection{Proof of Theorem~\ref{th: SFW_VR}}

\begin{theorem}
Set $\eta_t = \frac{1}{D T^{2/3} }$, $\gamma_t = \frac{1}{T^{2/3}}$, $\beta_{1,t} = 1 -  \frac{1}{T^{1/3}}$, for all $t \geq 1$, and the batch size $m_t = m$, for all $t > 1$.
Let $\bx_a$ be chosen uniformly at random from $\{ \bx_t \}_{t=1}^T$. 
Then, under Assumptions \ref{assume:L}, \ref{assume:L2}, and \ref{assume:v} with $p=2$ (i.e., bounded variance), Algorithm~\ref{alg:SFW_VR} satisfies 
$\E[ \mathcal{G}(\bx_a) ] = O\left(\frac{ D \left(F(\bx_1) - F(\bx_*) + L^2 + \frac{\sigma^2}{m}\right) }{T^{1/3}} + \frac{D\sigma^2}{m_1} \right).$
\end{theorem}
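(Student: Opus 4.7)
The strategy mirrors the proof of Theorem~\ref{th: Lion_Muon_as_FW} but replaces the momentum-only error analysis with a STORM-style recursion that exploits Assumption~\ref{assume:L2}. First, I would apply the $L$-smoothness of $F$, together with the FW optimality of $\bu_t$ and the diameter bound, exactly as in \eqref{final eq of lion 1}, to obtain
\begin{equation*}
F(\bx_{t+1}) \leq F(\bx_t) - \eta_t \mathcal{G}(\bx_t) + \eta_t D \,\|\nabla F(\bx_t) - \hat{\bg}_t\| + \tfrac{L}{2}\eta_t^2 D^2.
\end{equation*}
Since $\hat{\bg}_t$ is a convex combination of $\bg_t$ and $\bar{\bg}_t$, the gradient error splits as
$\|\nabla F(\bx_t)-\hat{\bg}_t\| \leq \tfrac{\beta_{1,t}}{1-\gamma_t}\|\epsilon_t\| + \bigl(1-\tfrac{\beta_{1,t}}{1-\gamma_t}\bigr)\|\bar{\bg}_t-\nabla F(\bx_t)\|$,
where $\epsilon_t := \bg_t-\nabla F(\bx_t)$ and the second term has expectation at most $\sigma/\sqrt{m}$. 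With the stated parameters, $\tfrac{\beta_{1,t}}{1-\gamma_t}\le 1$ while $1-\tfrac{\beta_{1,t}}{1-\gamma_t}=\Theta(T^{-1/3})$, so it remains to control $\E[\|\epsilon_t\|^2]$.

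The heart of the proof is the STORM recursion. I would rewrite the update of $\bg_t$ (for $t\geq 2$) as $\epsilon_t = (1-\gamma_t)\epsilon_{t-1} + V_t$ with
\begin{equation*}
V_t := \bigl[\bar{\bg}_t-\nabla F(\bx_t)\bigr] - (1-\gamma_t)\bigl[\nabla f(\bx_{t-1};\Xi_t)-\nabla F(\bx_{t-1})\bigr],
\end{equation*}
and verify $\E[V_t\mid \mathcal{F}_{t-1}]=0$. Decomposing $V_t = \gamma_t[\bar{\bg}_t-\nabla F(\bx_t)] + (1-\gamma_t)U_t$, where $U_t$ is the SVRG-style difference, Assumption~\ref{assume:L2} together with the i.i.d.\ batch of size $m$ gives $\E[\|U_t\|^2\mid\mathcal{F}_{t-1}] \leq \tfrac{L^2}{m}\|\bx_t-\bx_{t-1}\|^2 \leq \tfrac{L^2 \eta_{t-1}^2 D^2}{m}$, while Assumption~\ref{assume:v} with $p=2$ bounds the variance of the first summand by $\sigma^2/m$. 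The conditional zero-mean property kills the cross term in $\E[\|\epsilon_t\|^2]=(1-\gamma_t)^2\E[\|\epsilon_{t-1}\|^2]+\E[\|V_t\|^2]$, yielding
\begin{equation*}
\E[\|\epsilon_t\|^2] \;\leq\; (1-\gamma_t)^2\,\E[\|\epsilon_{t-1}\|^2] + \tfrac{2\gamma_t^2 \sigma^2}{m} + \tfrac{2(1-\gamma_t)^2 L^2 \eta_{t-1}^2 D^2}{m}.
\end{equation*}
For $t=1$, the indicator $\mathbbm{1}_{t\geq 2}$ makes $\bg_1 = \bar{\bg}_1$, so $\E[\|\epsilon_1\|^2]\leq \sigma^2/m_1$, which is the source of the $D\sigma^2/m_1$ term in the bound.

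Unrolling the recursion with $\gamma_t=\gamma=T^{-2/3}$ and $\eta_t=\eta=D^{-1}T^{-2/3}$ gives $\E[\|\epsilon_t\|^2] \leq (1-\gamma)^{2(t-1)}\tfrac{\sigma^2}{m_1} + O\bigl(\tfrac{\sigma^2+L^2}{m\,T^{2/3}}\bigr)$. I would then form a potential $\Psi_t := \E[F(\bx_t)] + C\,\E[\|\epsilon_t\|^2]$ and choose $C=\Theta(T^{1/3})$ so that a Young's-inequality split of $\eta D\|\epsilon_t\|$ against the linear drift $-C\gamma\E[\|\epsilon_t\|^2]$ in the recursion cancels the $\E[\|\epsilon_t\|^2]$ contribution, leaving only deterministic residuals scaling as $\tfrac{\eta^2 D^2}{C\gamma}$, $C\gamma^2\sigma^2/m$, and $CL^2\eta^2 D^2/m$. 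Telescoping $\Psi_t-\Psi_{t+1}$ over $t=1,\dots,T$, dividing by $T\eta$, and using $\Psi_1-\Psi_{T+1}\leq F(\bx_1)-F(\bx_*)+C\sigma^2/m_1$ together with the parameter choices produces exactly the stated rate.

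\paragraph{Main obstacle.}
The principal technical step is constructing the mean-zero decomposition $\epsilon_t=(1-\gamma_t)\epsilon_{t-1}+V_t$ in a form that (i) exposes the $\mathcal{F}_{t-1}$-conditional unbiasedness needed to kill the cross term in the squared-norm recursion, and (ii) permits the use of Assumption~\ref{assume:L2} rather than Assumption~\ref{assume:LG}-b to control $\E[\|U_t\|^2]$ by $L^2\|\bx_t-\bx_{t-1}\|^2/m$. Getting the constant $C$ in the potential function right so that the $\E[\|\epsilon_t\|^2]$ term telescopes cleanly without leaving a stray $D^2$ factor is the main calculation-level subtlety; everything else is standard unrolling and algebra analogous to Theorem~\ref{th: Lion_Muon_as_FW}.
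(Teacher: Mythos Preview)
Your proposal is correct and follows essentially the same route as the paper: the descent inequality \eqref{final eq of lion 1}, the STORM recursion $\E[\|\epsilon_t\|^2]\le(1-\gamma_t)^2\E[\|\epsilon_{t-1}\|^2]+2\gamma_t^2\sigma^2/m+2(1-\gamma_t)^2L^2\eta^2D^2$, a potential $\E[F(\bx_t)]+C\,\E[\|\epsilon_t\|^2]$ with $C=\Theta(T^{1/3})$, and telescoping. The only cosmetic difference is that the paper applies Young's inequality to the full error $\|\nabla F(\bx_t)-\hat{\bg}_t\|$ up front (introducing a free parameter $\nu=T^{-1/3}$) and then bounds the squared norm, whereas you keep the linear split and apply Young's later only to the $\|\epsilon_t\|$ piece; both lead to the same $C$ and the same rate.
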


\begin{proof}
From \eqref{final eq of lion 1} , we have
\begin{align*}
    F \left( \bx_{t+1} \right)
    &   \leq F \left( \bx_{t} \right) - \eta_t \mathcal{G}(\bx_t) + \eta_t D \| \nabla F \left( \bx_{t} \right) - \hat{\bg}_t  \| + \frac{L}{2} \eta_t^2 D^2
    \\ & \leq 
    F \left( \bx_{t} \right) - \eta_t \mathcal{G}(\bx_t) + \frac{ \eta_t \nu D }{2}
    + \frac{\eta_t D}{2 \nu} \| \nabla F \left( \bx_{t} \right) - \hat{\bg}_t  \|^2 + \frac{L}{2} \eta_t^2 D^2,
\end{align*}
where the second inequality follows from Young's inequality and $\nu > 0$ is a constant. Then, we have
\begin{align}
    \E[ F(\bx_{t+1}) ]
    \leq  \E[ F \left( \bx_{t} \right)] - \eta_t \E[ \mathcal{G}(\bx_t)] 
    + \frac{ \eta_t \nu D }{2}
    + \frac{ \eta_t D }{2 \nu} \E[  \| \nabla F \left( \bx_{t} \right) - \hat{\bg}_t  \|^2 ] + \frac{L}{2} \eta_t^2 D^2. \label{eq: L-smoothness final}
\end{align}
Furthermore,
\begin{align}
    \| \nabla F \left( \bx_{t} \right) - \hat{\bg}_t  \|^2
    &=  \| \nabla F \left( \bx_{t} \right) - \bg_t + \bg_t - \hat{\bg}_t  \|^2 \notag \\
    &=  \left\| \nabla F \left( \bx_{t} \right) - \bg_t + \left( 1 -  \frac{\beta_{1,t}}{1-\gamma_t} \right) \left( \bg_t - \bar{\bg}_t \right)  \right\|^2  \notag \\
    &=  \left\| \nabla F \left( \bx_{t} \right) - \bg_t + \left( 1 -  \frac{\beta_{1,t}}{1-\gamma_t} \right) \left( \bg_t - \nabla F \left( \bx_{t} \right)  +  \nabla F \left( \bx_{t} \right) - \bar{\bg}_t \right)  \right\|^2  \notag \\
    &=  \left\|  \frac{\beta_{1,t}}{1-\gamma_t} \left( \nabla F \left( \bx_{t} \right) - \bg_t \right) + \left( 1 -  \frac{\beta_{1,t}}{1-\gamma_t} \right) \left( \nabla F \left( \bx_{t} \right) - \bar{\bg}_t \right)  \right\|^2  \notag \\
    &\leq 2 \left( \frac{\beta_{1,t}}{1-\gamma_t} \right)^2 \left\| \nabla F \left( \bx_{t} \right) - \bg_t  \right\|^2 + 2 \left( 1 -  \frac{\beta_{1,t}}{1-\gamma_t} \right)^2 \left\|   \nabla F \left( \bx_{t} \right) - \bar{\bg}_t \right\|^2, \label{eq: L2 norm final}
\end{align} 
where the inequality follows from $\| \bu + \bv \|^2 \leq 2 \| \bu \|^2 + 2 \| \bv \|^2$.
As before, denote $\epsilon_t:= \bg_t - \nabla F(\bx_t)$. Let $\mathcal{G}(\bx)$, $\hat{\bv}_t$ 
as previously defined.
Let $\nabla f(\bx_t; \Xi_t ) :=
\frac{1}{m_t} \sum_{i=1}^{m_t} \nabla_{\bx} f \left( \bx_{t}; \xi_t^i \right) 
$ and $\nabla f(\bx_{t-1}; \Xi_t ) :=
\frac{1}{m_t} \sum_{i=1}^{m_t} \nabla_{\bx} f \left( \bx_{t-1}; \xi_t^i \right) 
$, where $\Xi_t$ represents the randomness at $t$.
By adding and subtracting the full gradient $\nabla F(\bx_t)$ from the update in Step 4 in Algorithm \ref{alg:SFW_VR}, we obtain
\begin{align*}
    \underbrace{ \bg_t - \nabla F(\bx_t) }_{:=\epsilon_t} 
    & = \left(1- \gamma_t\right) \underbrace{ \left( \bg_{t-1} - \nabla F(\bx_{t-1}) \right) }_{:= \epsilon_{t-1}}  
     + \gamma_t \left( \nabla f(\bx_t; \Xi_t ) - \nabla F( \bx_t ) \right) \nonumber \\
    & \quad 
    + \left(1-\gamma_t\right) \left( \nabla f(\bx_t; \Xi_t )- \nabla f(\bx_{t-1}; \Xi_t ) + \nabla F(\bx_{t-1}) - \nabla F( \bx_t )  \right).
\end{align*}
Now, using that $\| \bu + \bv \|^2 \leq 2 \| \bu \|^2 + 2 \| \bv \|^2$ and that $\E[  \langle \nabla f(\bx_t; \Xi_t )  - 
 \nabla f(\bx_{t-1}; \Xi_t ) + \nabla F(\bx_{t-1}) - \nabla F( \bx_t) , \epsilon_{t-1} \rangle   ] = 0$
as well as that $\E[ \langle \nabla f(\bx_t; \Xi_t ) - \nabla F(\bx_t)  , \epsilon_{t-1} \rangle ] = 0$, we obtain
\begin{align}
    \E[ \| \epsilon_t \|^2 ]
    & \leq
    2 \left(1-\gamma_t\right)^2 
    \E\left[ \left  \| 
     \nabla f(\bx_t; \Xi_t ) - \nabla f(\bx_{t-1}; \Xi_t ) + \nabla F(\bx_{t-1}) - \nabla F( \bx_t )   
    \right \|^2 \right] \notag
    \\ & \quad +
    2 \gamma_t^2 
    \E \left[ 
    \left \| \nabla f(\bx_t; \Xi_t ) - \nabla F(\bx_t)     \right \|^2 \right] 
     + \left(1-\gamma_t\right)^2 
    \E  \left[ \| \epsilon_{t-1} \|^2 \right]  \notag
    \\ & \leq
    2 \left(1-\gamma_t\right)^2 
    \E\left[ \left  \| 
       \nabla f(\bx_t; \Xi_t ) - \nabla f(\bx_{t-1}; \Xi_t )
    \right \|^2 \right] +
    2 \frac{\gamma_t^2 \sigma^2}{m_t} 
     + \left(1-\gamma_t\right)^2 
    \E  \left[ \| \epsilon_{t-1} \|^2 \right], \notag \\
    \intertext{where the second inequality uses the fact that $\left\| \nabla f(\bx_t; \Xi_t ) - \nabla F (\bx_t)\right\|^2 \leq \frac{\sigma^2}{m_t}$. Next, using the averaged $L$-Lipschitz gradient assumption, we further have}
    & \leq  2 \left(1-\gamma_t\right)^2 L^2
    \E\left[ \left  \| 
      \bx_t - \bx_{t-1}    
    \right \|^2 \right] +
    2 \frac{\gamma_t^2 \sigma^2}{m_t}
     + \left(1-\gamma_t\right)^2 
    \E  \left[ \| \epsilon_{t-1} \|^2 \right] \notag \\
    \intertext{Now, from the update step we have $\bx_t - \bx_{t-1} = \eta_t (\bu_t - \bx_{t-1})$, and thus}
    & =  2 \left(1-\gamma_t\right)^2 L^2 \eta_t^2
    \E\left[ \left  \| 
      \bu_t - \bx_{t-1}    
    \right \|^2 \right] +
    2 \frac{\gamma_t^2 \sigma^2}{m_t} 
     + \left(1-\gamma_t\right)^2 
    \E  \left[ \| \epsilon_{t-1} \|^2 \right] \notag \\
    & \leq  2 \left(1-\gamma_t\right)^2 L^2 \eta_t^2
    D^2 +
    2 \frac{\gamma_t^2 \sigma^2}{m_t} 
     + \left(1-\gamma_t\right)^2 
    \E  \left[ \| \epsilon_{t-1} \|^2 \right], \label{eq: epsilon_t final}
\end{align}
where the last inequality is obtained by the assumption on the diameter $D$ of the constraint set. Combining the above, from \eqref{eq: L-smoothness final} we have
\begin{align}
    \E[ F(\bx_{t+1}) ]
    & \leq  \E[ F \left( \bx_{t} \right)] - \eta_t \E[ \mathcal{G}(\bx_t)] 
    + \frac{ \eta_t \nu D }{2}
    + \frac{ \eta_t D }{2 \nu} \E[  \| \nabla F \left( \bx_{t} \right) - \hat{\bg}_t  \|^2 ] + \frac{L}{2} \eta_t^2 D^2 \notag \\
    & \leq  \E[ F \left( \bx_{t} \right)] - \eta_t \E[ \mathcal{G}(\bx_t)] 
    + \frac{ \eta_t \nu D }{2} +  2 \frac{ \eta_t D }{2 \nu} \left( \frac{\beta_{1,t}}{1-\gamma_t} \right)^2 \E \left[ \left\| \nabla F \left( \bx_{t} \right) - \bg_t  \right\|^2 \right] \notag \notag \\
    & \quad  + 2 \frac{ \eta_t D }{2 \nu}\left( 1 -  \frac{\beta_{1,t}}{1-\gamma_t} \right)^2 \E \left[  \left\|   \nabla F \left( \bx_{t} \right) - \bar{\bg}_t \right\|^2\right]  + \frac{L}{2} \eta_t^2 D^2 \notag \\
    &\leq \E[ F \left( \bx_{t} \right)] - \eta_t \E[ \mathcal{G}(\bx_t)] 
    + \frac{ \eta_t \nu D }{2} +   \frac{ \eta_t D }{ \nu} \left( \frac{\beta_{1,t}}{1-\gamma_t} \right)^2 \E \left[ \left\| \epsilon_t  \right\|^2 \right] \notag \notag \\
    & \quad  +  \frac{ \eta_t D }{ \nu}\left( 1 -  \frac{\beta_{1,t}}{1-\gamma_t} \right)^2 \frac{\sigma^2}{m_t}  + \frac{L}{2} \eta_t^2 D^2, \notag
\end{align}
where the second inequality follows from \eqref{eq: L2 norm final} and the third inequality follows from $\left\| \bar{\bg}_t - \nabla F (\bx_t)\right\|^2 \leq \frac{\sigma^2}{m_t}$. Now, defining the potential function $\Phi_{t}:= \E[ F(\bx_{t}) ]
 + C_t \E[ \| \epsilon_{t} \|^2 ] 
 $, and using the above inequality we have
\begin{align}
    \Phi_{t+1} & =
     \E[ F(\bx_{t+1}) ]
     + C_{t+1} \E[ \| \epsilon_{t+1} \|^2 ] \notag
     \\ & \leq \E[ F \left( \bx_{t} \right)] - \eta_t \E[ \mathcal{G}(\bx_t)] 
    + \frac{ \eta_t \nu D }{2} +   \frac{ \eta_t D }{ \nu} \left( \frac{\beta_{1,t}}{1-\gamma_t} \right)^2 \E \left[ \left\| \epsilon_t  \right\|^2 \right] \notag \\
    & \quad  +  \frac{ \eta_t D }{ \nu}\left( 1 -  \frac{\beta_{1,t}}{1-\gamma_t} \right)^2 \frac{\sigma^2}{m_t}  + \frac{L}{2} \eta_t^2 D^2  + C_{t+1} \E[ \| \epsilon_{t+1} \|^2 ] \notag \\
    &  \leq \E[ F \left( \bx_{t} \right)] - \eta_t \E[ \mathcal{G}(\bx_t)] 
    + \frac{ \eta_t \nu D }{2} +  \frac{ \eta_t D }{ \nu} \left( \frac{\beta_{1,t}}{1-\gamma_t} \right)^2 \E \left[ \left\| \epsilon_t  \right\|^2 \right] \notag \\
    & \quad  +  \frac{ \eta_t D }{ \nu}\left( 1 -  \frac{\beta_{1,t}}{1-\gamma_t} \right)^2 \frac{\sigma^2}{m_t}  + \frac{L}{2} \eta_t^2 D^2  \notag \\
    & \quad + C_{t+1} \left( 2 \left(1-\gamma_{t+1}\right)^2 L^2 \eta_{t+1}^2 D^2 + 2 \frac{\gamma_{t+1}^2 \sigma^2}{m_{t+1}} + \left(1-\gamma_{t+1}\right)^2 \E  \left[ \| \epsilon_{t} \|^2 \right]  \right) \notag \\
    & = \E[ F \left( \bx_{t} \right)] - \eta_t \E[ \mathcal{G}(\bx_t)] 
    + \frac{ \eta_t \nu D }{2} +  \left[C_{t+1}\left(1-\gamma_{t+1}\right)^2 + \frac{ \eta_t D }{ \nu} \left( \frac{\beta_{1,t}}{1-\gamma_t} \right)^2 \right] \E \left[ \left\| \epsilon_t  \right\|^2 \right] \notag \\
    & \quad  +  \frac{ \eta_t D }{ \nu}\left( 1 -  \frac{\beta_{1,t}}{1-\gamma_t} \right)^2 \frac{\sigma^2}{m_t}  + \frac{L}{2} \eta_t^2 D^2  + C_{t+1} \left( 2 \left(1-\gamma_{t+1}\right)^2 L^2 \eta_{t+1}^2 D^2 + 2 \frac{\gamma_{t+1}^2 \sigma^2}{m_{t+1}}  \right), \notag
\end{align}
where the second inequality follows from \eqref{eq: epsilon_t final}. We set $(C_t)_{t\geq 1}$ so that
\begin{equation} \label{cond2}
    C_{t+1}\left(1-\gamma_{t+1}\right)^2 + \frac{ \eta_t D }{ \nu} \left( \frac{\beta_{1,t}}{1-\gamma_t} \right)^2 \leq C_t.
\end{equation}
Then, we have
\begin{align}
    \Phi_{t+1} &\leq \Phi_{t} - \eta_t \E[ \mathcal{G}(\bx_t)] 
    + \frac{ \eta_t \nu D }{2} + \frac{ \eta_t D }{ \nu}\left( 1 -  \frac{\beta_{1,t}}{1-\gamma_t} \right)^2 \frac{\sigma^2}{m_t}  + \frac{L}{2} \eta_t^2 D^2 \notag \\
    & \quad + C_{t+1} \left( 2 \left(1-\gamma_{t+1}\right)^2 L^2 \eta_{t+1}^2 D^2 + 2 \frac{\gamma_{t+1}^2 \sigma^2}{m_{t+1}}  \right). \notag
\end{align}
Rearranging and summing the inequality from $t=1,2,\dots, T$, we have
\begin{align}
    \sum_{t=1}^T \eta_t \E[ \mathcal{G}(\bx_t)] 
    &\leq \Phi_1 - \Phi_{T+1}
    + \sum_{t=1}^T \frac{ \eta_t \nu D }{2} + \sum_{t=1}^T \frac{ \eta_t D }{ \nu}\left( 1 -  \frac{\beta_{1,t}}{1-\gamma_t} \right)^2 \frac{\sigma^2}{m_t}  + \sum_{t=1}^T \frac{L}{2} \eta_t^2 D^2 \notag \\
    & \quad + \sum_{t=1}^T
     C_{t+1}
    \left( 
    2 \left(1-\gamma_{t+1}\right)^2  L^2 \eta_{t+1}^2 D^2 
    + 2 \frac{ \gamma_{t+1}^2  \sigma^2}{m_{t+1}}
    \right). \notag
\end{align}
Let $\eta_t=\eta$ be a constant. Then, assuming that the output $\bx_a$ is chosen uniformly at random from $\{ \bx_t \}_{t=1}^T$, we further have
\begin{align}
    T \eta \E[ \mathcal{G}(\bx_a) ]
    & =  T \eta \frac{1}{T} \sum_{t=1}^T   \E[ \mathcal{G}(\bx_t)] \notag
    \\ & \leq 
    \Phi_1  - \Phi_{T+1} + \frac{ T \eta  \nu D}{ 2} + \frac{L}{2} T \eta^2 D^2
    + \frac{ \eta D }{ \nu} \sum_{t=1}^T \left( 1 -  \frac{\beta_{1,t}}{1-\gamma_t} \right)^2 \frac{\sigma^2}{m_t} \notag \\
    & \quad + \sum_{t=1}^T C_{t+1}
    \left( 
    2 \left(1-\gamma_{t+1}\right)^2  L^2 \eta^2 D^2 
    + 2 \frac{ \gamma_{t+1}^2  \sigma^2}{m_{t+1}}
    \right) \notag
\end{align}
Therefore, 
\begin{align}
    \E[ \mathcal{G}(\bx_a) ]
    & \leq \frac{\Phi_1  - \Phi_{T+1}}{T \eta }  + 
    \frac{\nu D}{2} + \frac{L \eta D^2}{2} + \frac{ D }{ \nu T} \sum_{t=1}^T \left( 1 -  \frac{\beta_{1,t}}{1-\gamma_t} \right)^2 \frac{\sigma^2}{m_t} \notag \\
    & \quad + \frac{1}{T \eta} \sum_{t=1}^T
     C_{t+1}
    \left( 
    2 \left(1-\gamma_{t+1}\right)^2  L^2 \eta^2 D^2 
    + 2 \frac{ \gamma_{t+1}^2  \sigma^2}{m_{t+1}} \right) \notag \\
    & \leq \frac{F(\bx_1)  - F(\bx^*)}{T \eta }  +  \frac{C_1 \E \left[ \| \epsilon_1 \|^2 \right]}{T \eta} +
    \frac{\nu D}{2} + \frac{L \eta D^2}{2} + \frac{ D }{ \nu T} \sum_{t=1}^T \left( 1 -  \frac{\beta_{1,t}}{1-\gamma_t} \right)^2 \frac{\sigma^2}{m_t} \notag \\
    & \quad + \frac{1}{T \eta} \sum_{t=1}^T
     C_{t+1}
    \left( 
    2 \left(1-\gamma_{t+1}\right)^2  L^2 \eta^2 D^2 
    + 2 \frac{ \gamma_{t+1}^2  \sigma^2}{m_{t+1}} \right) \notag \\
    & \leq \frac{F(\bx_1)  - F(\bx^*)}{T \eta }  +  \frac{C_1 }{T \eta} \frac{\sigma^2}{m_1} +
    \frac{\nu D}{2} + \frac{L \eta D^2}{2} + \frac{ D }{ \nu T} \sum_{t=1}^T \left( 1 -  \frac{\beta_{1,t}}{1-\gamma_t} \right)^2 \frac{\sigma^2}{m_t} \notag \\
    & \quad + \frac{1}{T \eta} \sum_{t=1}^T
     C_{t+1}
    \left( 
    2 \left(1-\gamma_{t+1}\right)^2  L^2 \eta^2 D^2 
    + 2 \frac{ \gamma_{t+1}^2  \sigma^2}{m_{t+1}} \right). \label{tmp2}
\end{align}
To continue, let $C_t=C$, $\beta_{1,t} = \beta$ and $\gamma_t=\gamma$ be some constants that will be determined soon. Then, \eqref{cond2} becomes
$C \left(1-\gamma\right)^2  + \frac{ \eta D }{\nu} \left( \frac{\beta}{1-\gamma}\right)^2 \leq C.$
To satisfy the constraint, we can simply set
\begin{equation}
C \gets \frac{ \eta D \beta^2}{ \nu (1-\gamma)^2 (1 -(1-\gamma)^2)}.
\end{equation}
Substituting the expression of $C$ back into \eqref{tmp2}, and letting $m_t = m$, for all $t > 1$, we have
\begin{align*}
    \E[ \mathcal{G}(\bx_a) ]
    &\leq \frac{F(\bx_1)  - F(\bx^*)}{T \eta }  +  \frac{ D \beta^2}{ \nu (1-\gamma)^2 (1 -(1-\gamma)^2) T} \frac{\sigma^2}{m_1}  \\
    & \quad  + \frac{\nu D}{2} + \frac{L \eta D^2}{2} + \frac{ D }{ \nu } \left( 1 -  \frac{\beta}{1-\gamma} \right)^2 \frac{\sigma^2}{m} \\
    & \quad + 
    \frac{ D \beta^2}{ \nu (1-\gamma)^2 (1 -(1-\gamma)^2)}
    \left[ 
    2 \left(1-\gamma\right)^2  L^2 \eta^2 D^2 
    + 2 \frac{ \gamma^2  \sigma^2}{m} \right] \\
    &= \frac{F(\bx_1)  - F(\bx^*)}{T \eta }  +  \frac{ D \beta^2}{  \nu \gamma (1-\gamma)^2 (2 - \gamma) T} \frac{\sigma^2}{m_1}  \\
    & \quad  + \frac{\nu D}{2} + \frac{L \eta D^2}{2} + \frac{ D }{ \nu } \left( 1 -  \frac{\beta}{1-\gamma} \right)^2 \frac{\sigma^2}{m} \\
    & \quad + 
    \frac{ D \beta^2}{ \nu \gamma (1-\gamma)^2 (2 - \gamma)}
    \left[ 
    2 \left(1-\gamma\right)^2  L^2 \eta^2 D^2 
    + 2 \frac{ \gamma^2  \sigma^2}{m} \right]
\end{align*}
We can choose parameters $\eta = \frac{1}{D T^{2/3}}$, $\gamma = \frac{1}{T^{2/3}}$, $\beta = 1 -  \frac{1}{T^{1/3}}$ and $\nu = \frac{1}{T^{1/3}}$, for which we obtain
\begin{align*}
    \E[ \mathcal{G}(\bx_a) ]
    &\leq \frac{D(F(\bx_1)  - F(\bx^*))}{T^{1/3}}  +  \frac{ D T^{2/3} \left( 1 - T^{-1/3}\right)^2}{\left( 2 T^{2/3} - 1 \right)\left( 1 - T^{-2/3}\right)^2} \frac{\sigma^2}{m_1}  \\
    & \quad  + \frac{D}{2 T^{1/3}} + \frac{L D}{2 T^{2/3}} + D T^{1/3} \left( \frac{T^{-1/3} - T^{-2/3}}{1 - T^{-2/3}} \right)^2 \frac{\sigma^2}{m} \\
    & \quad + 
    \frac{ D T^{5/3} \left( 1 - T^{-1/3}\right)^2}{ \left( 2 T^{2/3} - 1 \right)\left( 1 - T^{-2/3}\right)^2}
    \left[ 
    \frac{2 L^2}{T^{4/3}} \left( 1 - \frac{2}{T^{2/3}} + \frac{1}{T^{4/3}} \right)
    +  \frac{2}{T^{4/3}} \frac{  \sigma^2}{m} \right]. 
\end{align*}
Therefore,
\begin{align*}
    \E[ \mathcal{G}(\bx_a) ] = O \left( \frac{D}{T^{1/3}} \left( F(\bx_1)  - F(\bx^*) + \frac{L^2 + 1}{2} + \frac{3\sigma^2}{2m} \right) + \frac{LD}{2 T^{2/3}} + \frac{L^2 D }{2 T^{5/3}} + \frac{D \sigma^2}{4 m_1} \right).
\end{align*}

\end{proof}

\subsection{\textsc{Lion+} and \textsc{Muon+}}

In this section, we present the algorithmic specifications of \textsc{Lion+} and \textsc{Muon+}, which are obtained as instances of Algorithm~\ref{alg:SFW_Clipping}.

\begin{figure}[H]
\vspace*{-4mm}
  \centering
\begin{minipage}{0.49\textwidth}
  \begin{algorithm}[H]
    \caption{\textsc{Lion+}} \label{alg:Lion+}
    \begin{algorithmic}
    \small
\STATE \textbf{Required:} Momentum parameters $\beta_1, \beta_2$, step-sizes $\{\eta_t\}$, weight decay parameter $\lambda$, and clipping parameter $M$.
\STATE \textbf{Initialize:} $\mathbf{m}_0 = 0$ and $\bx_1 \in \mathcal{C}$.
\FOR{$t=1,2,\dots$}
\STATE Sample $\Xi_t \sim \mathcal{D}$. 
\STATE Compute 
$\bar{\bg}_t = \left( 1 \wedge \frac{M}{ \| \nabla f(\bx_t; \Xi_t) \| } \right)\nabla f(\bx_t; \Xi_t )
$. 
\STATE Update $\bc_{t} = \beta_1 \mathbf{m}_{t-1} + (1-\beta_1) \bar{\bg}_{t} $.
\STATE Update $\bx_{t+1}  = \bx_t - \eta_t \left( \text{sign}(\bc_t) + \lambda \bx_t \right) $.
\STATE Update $\mathbf{m}_t = \beta_2 \mathbf{m}_{t-1} + (1-\beta_2) \bar{\bg}_{t}$.
\ENDFOR 
    \end{algorithmic}
  \end{algorithm}
\end{minipage}%
\hfill
\begin{minipage}{0.49\textwidth}
  \begin{algorithm}[H]
\caption{\textsc{Muon+}} \label{alg:Muon+}
    \begin{algorithmic}
    \small 
\STATE \textbf{Required:} Momentum parameter $\mu$, step-sizes $\{\eta_t\}$, weight decay parameter $\lambda$, and clipping parameter $M$.
\STATE \textbf{Initialize:} $\mathbf{B}_0 = 0$ and $\mathbf{X}_1 \in \mathcal{C}$.
\FOR{$t=1,2,\dots$}
\STATE Sample $\Xi_t \sim \mathcal{D}$.
\STATE Compute
$\mathbf{G}_{t} = \left( 1 \wedge \frac{M}{ \| \nabla f(\mathbf{X}_t; \Xi_t ) \| } \right) \nabla f(\mathbf{X}_t; \Xi_t ) 
$. 
\STATE Update $\mathbf{B}_t = \mu \mathbf{B}_{t-1} + \mathbf{G}_{t}$.
\STATE Update $\mathbf{O}_t = \arg\min_{\mathbf{A} \in \mathcal{O}_{m \times n}}\|\mathbf{A} - \mathbf{B}_t\|_F$.
\STATE Update $\mathbf{X}_{t+1} = \mathbf{X}_t - \eta_t \left( \mathbf{O}_t + \lambda \mathbf{X}_t \right)$.
\ENDFOR 
\end{algorithmic}
\end{algorithm}
\end{minipage}
\end{figure}

\subsection{Proof of Theorem~\ref{thm:SFW-Clipping}}

\noindent
In the following, we denote
\begin{align}
\epsilon_t & := \begin{cases} \bg_t - \nabla F(\bx_t) & \qquad t\geq 1 \\ - \nabla F(\bx_1) & \qquad t=0 \end{cases}
\\ Z_t & :=  \nabla F(\bx_{t-1}) - \nabla F(\bx_{t})   
\\ \zeta_t & := \bar{\bg}_t -\nabla F(\bx_t), \quad \zeta^u_t := \bar{\bg}_t - \E_t[ \bar{\bg}_t], \quad  \zeta^b_t := \E_t[ \bar{\bg}_t] - \nabla F(\bx_t),
\end{align}
where $\E_t[ \cdot | \mathcal{F}_{t-1}]$ and $\mathcal{F}_{t-1}$ represents the randomness up to and including $t-1$.

We will need a series of technical lemmas, and we build upon some of the machinery developed in the analysis of Normalized SGD with clipping and momentum by \citet{liu2023breaking} to derive our result for Stochastic FW with clipping.

\begin{lemma}\label{lem:tail_3}
Set the initial point $\bx_1=\bx_0$.
Let $\gamma_t = \gamma$ and $\eta_t = \eta$ for some constants $\gamma>0$ and $\eta>0$. Then $$\epsilon_t = (1-\gamma)^t \epsilon_0 + (1-\gamma) \left( \sum_{s=1}^t (1-\gamma)^{t-s} Z_s   \right) + \gamma \left( \sum_{s=1}^t (1-\gamma)^{t-s} \zeta_s  \right). $$
\end{lemma}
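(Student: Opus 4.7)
The plan is to establish the one-step recurrence
\[
\epsilon_t = (1-\gamma)\epsilon_{t-1} + (1-\gamma) Z_t + \gamma \zeta_t, \qquad t \geq 1,
\]
and then unroll it by a standard induction/geometric-series argument to obtain the claimed closed-form expression.

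To derive the one-step recurrence, I would substitute Line~5 of Algorithm~\ref{alg:SFW_Clipping} with $\gamma_t=\gamma$, namely $\bg_t = (1-\gamma)\bg_{t-1} + \gamma\bar{\bg}_t$, into the definition $\epsilon_t = \bg_t - \nabla F(\bx_t)$, and then add and subtract $(1-\gamma)\nabla F(\bx_{t-1})$ and $\gamma \nabla F(\bx_t)$. Regrouping the terms produces exactly the three pieces $(1-\gamma)\epsilon_{t-1}$, $(1-\gamma)(\nabla F(\bx_{t-1}) - \nabla F(\bx_t)) = (1-\gamma) Z_t$, and $\gamma(\bar{\bg}_t - \nabla F(\bx_t)) = \gamma \zeta_t$.

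For the base case $t=1$, I would use the initialization $\bg_0 = 0$ together with the convention $\bx_1 = \bx_0$. The latter ensures $Z_1 = \nabla F(\bx_0) - \nabla F(\bx_1) = 0$, and a direct calculation gives
\[
\epsilon_1 = \gamma \bar{\bg}_1 - \nabla F(\bx_1) = \gamma \zeta_1 - (1-\gamma)\nabla F(\bx_1) = (1-\gamma)\epsilon_0 + \gamma\zeta_1,
\]
using $\epsilon_0 := -\nabla F(\bx_1)$. This matches the claimed formula at $t=1$ (noting that $(1-\gamma)Z_1 = 0$).

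For the inductive step, assuming the formula at $t-1$, I would apply the one-step recurrence and multiply the previous closed form by $(1-\gamma)$, which shifts each exponent of $(1-\gamma)^{t-1-s}$ up by one, then absorb the new terms $(1-\gamma) Z_t$ and $\gamma \zeta_t$ as the $s=t$ summands of the two sums. There is no real obstacle: the argument is a routine telescoping/induction, and the only design choice worth flagging is the convention $\bx_1 = \bx_0$, which is precisely what lets the summation notation range from $s=1$ to $s=t$ in a uniform way without a separate treatment of $Z_1$.
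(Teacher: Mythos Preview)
Your proposal is correct and follows essentially the same approach as the paper: establish the one-step recurrence $\epsilon_t = (1-\gamma)\epsilon_{t-1} + (1-\gamma)Z_t + \gamma\zeta_t$ by substituting the update $\bg_t = (1-\gamma)\bg_{t-1} + \gamma\bar{\bg}_t$ into the definition of $\epsilon_t$, verify it also holds at $t=1$ (using $\bg_0=0$ and $\bx_1=\bx_0$ so that $Z_1=0$), and then unroll. The paper's proof is simply a terser version of yours.
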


\begin{proof}
For any $t \geq 2$. by expansion, we obtain
\begin{align}
    \epsilon_t = \bg_t - \nabla F(\bx_t) & =
    \left(1- \gamma \right) \bg_{t-1} + \gamma \bar{\bg}_t  - \nabla F(\bx_t) \notag
     \\ &  = ( 1 - \gamma ) \epsilon_{t-1} + (1-\gamma) Z_t + \gamma \zeta_t, \label{j0-3}
\end{align}
Furthermore, \eqref{j0-3} also holds for $t=1$.
Recursively expanding \eqref{j0-3} from $t$ back to 1 leads to the result. 
\end{proof}

\begin{lemma} \label{lem:sum_zs}
Denote $D$ the diameter of the constraint set $\mathcal{C}$. 
For any $t \in [T]$, it holds that
$$
\left \| \sum_{s=1}^t (1-\gamma)^{t-s} Z_s   \right \| 
\leq \frac{LD \eta}{\gamma}. $$ 
\end{lemma}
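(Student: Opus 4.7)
The plan is to bound each $Z_s$ individually in norm and then handle the convex-combination-style sum by a direct triangle inequality followed by a geometric series computation.

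First, I would observe that $Z_1 = \nabla F(\bx_0) - \nabla F(\bx_1) = 0$ since the lemma sets $\bx_1 = \bx_0$, so the $s=1$ term drops out and we only need to control $s \geq 2$. For any such $s$, the update rule in Algorithm~\ref{alg:SFW_Clipping} gives $\bx_s = (1-\eta)\bx_{s-1} + \eta \bu_{s-1}$, hence $\bx_s - \bx_{s-1} = \eta(\bu_{s-1} - \bx_{s-1})$. Since both $\bu_{s-1}$ and $\bx_{s-1}$ lie in $\mathcal{C}$, the diameter bound yields $\|\bx_s - \bx_{s-1}\| \leq \eta D$. Combined with Assumption~\ref{assume:LG}-a ($L$-Lipschitz gradient of $F$), this gives
\begin{equation*}
    \|Z_s\| = \|\nabla F(\bx_{s-1}) - \nabla F(\bx_s)\| \leq L\|\bx_{s-1} - \bx_s\| \leq L D \eta.
\end{equation*}

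Then I would apply the triangle inequality to the sum and use the geometric-series estimate:
\begin{equation*}
    \left\| \sum_{s=1}^t (1-\gamma)^{t-s} Z_s \right\| \leq \sum_{s=1}^t (1-\gamma)^{t-s} \|Z_s\| \leq LD\eta \sum_{s=1}^t (1-\gamma)^{t-s} \leq \frac{LD\eta}{\gamma},
\end{equation*}
where the last step uses $\sum_{s=1}^t (1-\gamma)^{t-s} = \frac{1-(1-\gamma)^t}{\gamma} \leq \frac{1}{\gamma}$ for $\gamma \in (0,1]$.

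There is no real obstacle here — the lemma is essentially a mechanical consequence of smoothness applied term-by-term plus a geometric-series bound. The only subtlety worth flagging is the initialization convention $\bx_1 = \bx_0$, which ensures $Z_1 = 0$ so that the $s=1$ contribution (for which the update rule does not literally apply) vanishes; the non-trivial terms $s \geq 2$ are then all handled uniformly by $\|Z_s\| \leq LD\eta$.
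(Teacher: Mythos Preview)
Your proof is correct and follows essentially the same route as the paper: triangle inequality, Lipschitz gradient of $F$ to get $\|Z_s\| \leq L\|\bx_{s-1} - \bx_s\|$, the update rule together with the diameter bound to get $\|\bx_{s-1} - \bx_s\| \leq \eta D$, and then a geometric-series estimate $\sum_{s=1}^t (1-\gamma)^{t-s} \leq 1/\gamma$. Your explicit handling of the $s=1$ term via $\bx_1 = \bx_0$ (so $Z_1 = 0$) is in fact slightly more careful than the paper, which simply applies the update identity $\bx_s - \bx_{s-1} = \eta(\bu_{s-1} - \bx_{s-1})$ uniformly without commenting on the boundary case.
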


\begin{proof}
We have
\begin{align*}
    \left \| \sum_{s=1}^t (1-\gamma)^{t-s} Z_s   \right \| 
    &\leq \sum_{s=1}^t (1-\gamma)^{t-s} \| Z_s \| \\
    & = \sum_{s=1}^t (1-\gamma)^{t-s}  \| \nabla F(\bx_{s-1}) - \nabla F(\bx_s) \|
    \\ & 
     \leq \sum_{s=1}^t (1-\gamma)^{t-s}  L \| \bx_{s-1} - \bx_s \|
    \\ & 
    = \sum_{s=1}^t (1-\gamma)^{t-s} L \eta \| \bx_{s-1} - \bu_{s-1} \|
    \\ & 
    \leq \sum_{s=1}^t (1-\gamma)^{t-s} L \eta D
    \\ &
    \leq \frac{LD \eta}{\gamma}.
\end{align*}

\end{proof}

\begin{lemma}[Lemma 5 in \citet{liu2023breaking}] \label{lem:b_t}
For all $t \in [T]$, we have $\| \zeta^u_t \| \leq 2 M$.
Furthermore, if $\| \nabla F(\bx_t) \| \leq \frac{M}{2}$, then the following holds:
\begin{align}
\| \zeta^b_t \| & \leq 2 \sigma^p M^{1-p}
\\ \E_{t}\left[\| \zeta^u_t \|^2  \right] & \leq 10 \sigma^p M^{2-p}.
\end{align}
\end{lemma}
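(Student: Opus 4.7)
\textbf{Plan for Lemma~\ref{lem:b_t}.} Write $g := \nabla f(\bx_t; \Xi_t)$ and $\mu := \nabla F(\bx_t)$ for brevity, so $\bar{\bg}_t = \min(1, M/\|g\|)\, g$, $\E_t[g] = \mu$, $\zeta^b_t = \E_t[\bar{\bg}_t] - \mu = \E_t[\bar{\bg}_t - g]$, and $\zeta^u_t = \bar{\bg}_t - \E_t[\bar{\bg}_t]$. The uniform bound $\|\zeta^u_t\| \le 2M$ is immediate: the clipping construction forces $\|\bar{\bg}_t\| \le M$ deterministically, hence $\|\E_t[\bar{\bg}_t]\| \le M$ as well, and the triangle inequality closes the bound. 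This part should be one line.

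For the bias bound, note that $\bar{\bg}_t - g$ is supported on the event $\{\|g\| > M\}$, on which $\|\bar{\bg}_t - g\| = \|g\| - M \le \|g\|$. Jensen gives $\|\zeta^b_t\| \le \E_t[\|g\|\,\mathbb{1}\{\|g\|>M\}]$. The key geometric step — and the place where the assumption $\|\mu\| \le M/2$ is used — is the implication $\|g\| > M \Rightarrow \|g - \mu\| > M/2$, which also yields $\|g\| \le 2\|g - \mu\|$ on that event. Substituting and applying the Markov-style bound $\|g-\mu\|\,\mathbb{1}\{\|g-\mu\|>M/2\} \le \|g-\mu\|^p / (M/2)^{p-1}$ together with Assumption~\ref{assume:v} produces $\|\zeta^b_t\| \le 2\cdot(M/2)^{1-p}\sigma^p = 2^{2-p}\sigma^p M^{1-p}$, which is at most $2\sigma^p M^{1-p}$ for $p\in(1,2]$.

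For the second-moment bound, I first use the standard variance identity $\E_t[\|\zeta^u_t\|^2] \le \E_t[\|\bar{\bg}_t - \mu\|^2]$ (the conditional mean minimizes mean-square distance). The plan is then to leverage the fact that the clipping assumption $\|\bar{\bg}_t\|\le M$ combined with $\|\mu\|\le M/2$ gives the deterministic bound $\|\bar{\bg}_t - \mu\| \le 3M/2$. Using $\|v\|^2 = \|v\|^{2-p}\|v\|^p \le (3M/2)^{2-p}\|v\|^p$, it suffices to control $\E_t[\|\bar{\bg}_t - \mu\|^p]$. Splitting $\bar{\bg}_t - \mu = (\bar{\bg}_t - g) + (g - \mu)$, invoking $(a+b)^p \le 2^{p-1}(a^p + b^p)$, and re-using the estimate $\|\bar{\bg}_t - g\| \le 2\|g - \mu\|$ on the clipping event from the previous step, the $p$-th moment collapses to a constant multiple of $\sigma^p$. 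Combining the factors yields $\E_t[\|\zeta^u_t\|^2] \le C\,\sigma^p M^{2-p}$ for an absolute constant, which with careful tracking over $p\in(1,2]$ sharpens to $10$.

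The main technical obstacle is precisely the switch from the event $\{\|g\|>M\}$ — where heavy-tailedness is a priori uncontrolled — to the event $\{\|g-\mu\|>M/2\}$, where Assumption~\ref{assume:v} applies. Everything else is clean arithmetic; the only subtlety beyond that is bookkeeping the constants $2^{2-p}$, $2^{p-1}$, and $(3/2)^{2-p}$ across $p\in(1,2]$ to meet the stated $2$ and $10$. I do not anticipate any issue in applying this to vector-valued stochastic gradients since the argument is entirely norm-based.
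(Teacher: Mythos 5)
First, note that the paper does not prove this lemma at all: it is imported verbatim as Lemma~5 of \citet{liu2023breaking}, so there is no in-paper proof to compare against. Judged on its own merits, your plan follows the standard route (which is also essentially the route in \citet{liu2023breaking}): the uniform bound on $\zeta_t^u$ is immediate from clipping plus Jensen, the bias is supported on the clipping event, and the assumption $\|\nabla F(\bx_t)\|\leq M/2$ is used exactly where you say it is, to transfer from $\{\|g\|>M\}$ to $\{\|g-\mu\|>M/2\}$ where Assumption~\ref{assume:v} bites. The second-moment argument (variance $\leq$ mean-square distance to $\mu$, the deterministic bound $\|\bar{\bg}_t-\mu\|\leq 3M/2$, interpolation $\|v\|^2\leq(3M/2)^{2-p}\|v\|^p$, and a $p$-th-moment bound via the splitting) is sound and, after tracking constants over $p\in(1,2]$, does land under $10$.

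There is, however, a concrete arithmetic error in the bias bound that leaves your chain short of the stated constant. You claim $2\cdot(M/2)^{1-p}\sigma^p = 2^{2-p}\sigma^p M^{1-p}$, but $(M/2)^{1-p}=2^{p-1}M^{1-p}$, so the product is $2^{p}\sigma^p M^{1-p}$, which ranges up to $4\sigma^p M^{1-p}$ at $p=2$ and exceeds the claimed $2\sigma^p M^{1-p}$ for every $p>1$. The factor you lose comes from bounding $\|\bar{\bg}_t-g\|\leq\|g\|\leq 2\|g-\mu\|$ on the clipping event. The fix is to use the exact identity on that event, $\|\bar{\bg}_t-g\|=\|g\|-M\leq\|g-\mu\|+\|\mu\|-M\leq\|g-\mu\|-M/2\leq\|g-\mu\|$, which removes the spurious factor of $2$ and gives $\|\zeta_t^b\|\leq\E_t\left[\|g-\mu\|\mathbbm{1}\{\|g-\mu\|>M/2\}\right]\leq(M/2)^{1-p}\sigma^p=2^{p-1}\sigma^pM^{1-p}\leq 2\sigma^pM^{1-p}$ for $p\in(1,2]$. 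With that one-line repair the proposal is a correct proof of the lemma.
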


\begin{lemma}[Lemma 10 in \citet{liu2023breaking}] \label{lem:Tail-Us}
Fix any $t \in [T]$.
Define 
\begin{equation}
    U_s^t :
    = \begin{cases} 
    0 & s=0 
    \\ \mathrm{Sign}( \sum_{i=1}^{s-1} U_i^t )
     \frac{ \langle \sum_{i=1}^{s-1} (1-\gamma)^{t-i} \zeta^u_i, (1-\gamma)^{t-s} \zeta^u_s \rangle }{ \| \sum_{i=1}^{s-1} (1-\gamma)^{t-i} \zeta^u_i     \| } & s \neq 0 \text{ and } \sum_{i=1}^{s-1} (1-\gamma)^{t-i} \zeta^u_i \neq 0
    \\ 0 & s \neq 0 \text{ and } \sum_{i=1}^{s-1} (1-\gamma)^{t-i} \zeta^u_i = 0.
    \end{cases}
\end{equation}
Then, $U_s^t$ is a martingale difference sequence satisfying $| U_s^t| \leq \| (1-\gamma)^{t-s} \zeta^u_s \| $.
Also, denote $R_s^t:= \|  (1-\gamma)^{t-s} \zeta^u_s \|^2 - \E_s\left[ \|  (1-\gamma)^{t-s} \zeta^u_s \|^2  \right]$, which is also a martingale difference sequence. We have 
\begin{equation}
    \left \| \sum_{s=1}^{t} (1-\gamma)^{t-s} \zeta_s \right \| 
    \leq \left| \sum_{s=1}^t U_s^t \right|
    + \sqrt{ 2 \left| \sum_{s=1}^t R_s^t    \right| }
    + \sqrt{ 2 \sum_{s=1}^t \E_s[ \|(1-\gamma)^{t-s} \zeta_s^u  \|^2   ]      }
    + \left \| \sum_{s=1}^t (1-\gamma)^{t-s} \zeta_s^b \right \|
\end{equation}
for all $t \in [T]$.
\end{lemma}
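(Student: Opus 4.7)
The plan is to handle the four assertions in turn: the first three (martingale-difference property of $U_s^t$, the magnitude bound, and the martingale-difference property of $R_s^t$) are routine, while the fourth (the norm decomposition) is the substantive step.

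For the MDS property of $U_s^t$, I would observe that the sign factor $\mathrm{Sign}(\sum_{i=1}^{s-1} U_i^t)$, the accumulated vector $\sum_{i=1}^{s-1}(1-\gamma)^{t-i}\zeta_i^u$, and its norm are all $\mathcal{F}_{s-1}$-measurable, so the only $\mathcal{F}_s$-measurable piece is the factor $\zeta_s^u$ appearing linearly in the numerator. Taking $\E_s[\cdot]$ and using $\E_s[\zeta_s^u]=0$ (definition of the unbiased part of the clipped gradient) yields $\E_s[U_s^t]=0$; the two degenerate branches give $U_s^t=0$ directly. The magnitude bound $|U_s^t|\leq \|(1-\gamma)^{t-s}\zeta_s^u\|$ is Cauchy--Schwarz applied to the numerator combined with $|\mathrm{Sign}|\leq 1$. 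The MDS property of $R_s^t$ is immediate since $R_s^t=X_s-\E_s[X_s]$ with $X_s:=\|(1-\gamma)^{t-s}\zeta_s^u\|^2$.

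For the norm inequality, I would first split $\zeta_s=\zeta_s^u+\zeta_s^b$ and invoke the triangle inequality, so that the bias sum appears directly and it suffices to bound $\|S_t\|$ with $S_s:=\sum_{i=1}^s(1-\gamma)^{t-i}\zeta_i^u$. Writing $A_s:=\sum_{i=1}^s U_i^t$, the goal reduces to
\[
\|S_t\|^2 \;\leq\; A_t^2 \;+\; 2\Bigl|\sum_{s=1}^t R_s^t\Bigr| \;+\; 2\sum_{s=1}^t \E_s\bigl[\|(1-\gamma)^{t-s}\zeta_s^u\|^2\bigr],
\]
since then $\sqrt{a+b+c}\leq \sqrt{a}+\sqrt{b}+\sqrt{c}$ gives the lemma. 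Telescoping both squared norms produces
\begin{align*}
\|S_t\|^2 &= \sum_{s=1}^t \bigl[2\|S_{s-1}\|\, W_s + \|(1-\gamma)^{t-s}\zeta_s^u\|^2\bigr], \\
A_t^2 &= \sum_{s=1}^t \bigl[2|A_{s-1}|\, W_s + (U_s^t)^2\bigr],
\end{align*}
where $W_s:=\langle S_{s-1},(1-\gamma)^{t-s}\zeta_s^u\rangle/\|S_{s-1}\|$ (and $0$ if $S_{s-1}=0$). The sign factor in the definition of $U_s^t$ is engineered precisely so that $U_s^t=\mathrm{Sign}(A_{s-1})\, W_s$ (hence $A_{s-1}U_s^t=|A_{s-1}|\,W_s$) and $\langle S_{s-1},(1-\gamma)^{t-s}\zeta_s^u\rangle=\|S_{s-1}\|\,W_s$, so both cross-terms are nonnegative multiples of the same scalar martingale-difference increment $W_s$. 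Decomposing $\sum_s\|(1-\gamma)^{t-s}\zeta_s^u\|^2=V^{(t)}+R^{(t)}$ (where $V^{(t)}:=\sum_s \E_s[\cdot]$) and invoking $(U_s^t)^2\leq\|(1-\gamma)^{t-s}\zeta_s^u\|^2$ from the already-proved magnitude bound lets one combine the two telescoping identities into the desired inequality.

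The main obstacle is bridging the Hilbert-space cross-term sum $\sum_s \|S_{s-1}\|\,W_s$ with its scalar counterpart $\sum_s |A_{s-1}|\, W_s$, since $\|S_{s-1}\|\neq |A_{s-1}|$ in general; the residual $\sum_s(\|S_{s-1}\|-|A_{s-1}|)\, W_s$ must be absorbed into the nonnegative terms $2|\sum_s R_s^t|$ and $2V^{(t)}$. I expect this to require a short induction on $s$ establishing an invariant of the form $\|S_s\|\leq |A_s|+(\text{error bounded by the next-order terms})$, where the sign-alignment identity above is the algebraic device that makes both accumulations move consistently and thereby makes the bookkeeping go through.
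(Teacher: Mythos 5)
You should first note that the paper itself offers no proof of this statement --- it is imported verbatim as Lemma~10 of \citet{liu2023breaking} (an adaptation of the Cutkosky--Mehta device of shadowing a Hilbert-space martingale by a scalar one) --- so your proposal is measured against that source's argument. Your handling of the routine parts is correct: $\mathcal{F}_{s-1}$-measurability of the sign, the accumulated vector and its norm, together with $\E_s[\zeta_s^u]=0$, gives the martingale-difference property; Cauchy--Schwarz gives the magnitude bound; the $R_s^t$ claim is immediate; and splitting $\zeta_s=\zeta_s^u+\zeta_s^b$ correctly isolates the bias sum. The telescoping identities and the identity $U_s^t=\mathrm{Sign}(A_{s-1})W_s$ are also right.

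The gap is in the substantive step, and it is twofold. First, the inequality you reduce to, $\|S_t\|^2\le A_t^2+2\bigl|\sum_s R_s^t\bigr|+2\sum_s\E_s[\|d_s\|^2]$ with $d_s:=(1-\gamma)^{t-s}\zeta_s^u$, is \emph{false}: on a path where $R_s^t\equiv 0$ it reads $\|S_t\|^2\le A_t^2+2\sum_s\|d_s\|^2$, and taking $d_1=e$ and $d_s=\epsilon e$ for $s=2,\dots,k+1$ (realizable with positive probability under symmetric $\pm$ noise) gives $A_{k+1}=k\epsilon$, $\|S_{k+1}\|=1+k\epsilon$, hence $\|S_{k+1}\|^2-A_{k+1}^2=1+2k\epsilon$, which exceeds $2(1+k\epsilon^2)$ for, say, $\epsilon=0.1$ and $k=1000$. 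What is actually true, and all the lemma needs, is the \emph{unsquared} statement $\|S_t\|\le|A_t|+\sqrt{2\sum_s\|d_s\|^2}$, whose square retains the cross term $2|A_t|\sqrt{2\sum_s\|d_s\|^2}$ that your reduction discards. Your closing sentence gestures at exactly this unsquared invariant, but it contradicts the reduction you set up, and you never execute the induction.

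Second, that induction is the entire content of the lemma and is not absorbable bookkeeping of the residual $\sum_s(\|S_{s-1}\|-|A_{s-1}|)W_s$ --- the counterexample above shows that residual genuinely outruns the second-order terms. The step that works uses three specific facts: (i) $A_s=\mathrm{Sign}(A_{s-1})(|A_{s-1}|+W_s)$, so $|A_s|\ge|A_{s-1}|+W_s$ and $|A_s|\ge 0$ (this is what the sign factor buys); (ii) the exact expansion $\|S_s\|^2=(\|S_{s-1}\|+W_s)^2+(\|d_s\|^2-W_s^2)$; and (iii) a case split on the sign of $\|S_{s-1}\|+W_s$. If $\|S_{s-1}\|+W_s\le 0$ then $\|S_s\|^2\le\|d_s\|^2\le 2M_s$ with $M_s:=\sum_{i\le s}\|d_i\|^2$; otherwise the hypothesis $\|S_{s-1}\|\le|A_{s-1}|+\sqrt{2M_{s-1}}$ gives $0<\|S_{s-1}\|+W_s\le|A_s|+\sqrt{2M_{s-1}}$, and squaring and adding $\|d_s\|^2-W_s^2\le\|d_s\|^2$ is absorbed because $2M_{s-1}+\|d_s\|^2\le 2M_s$ --- which is precisely where the factor $2$ under the square root comes from. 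The stated bound then follows from $\sqrt{2M_t}\le\sqrt{2|\sum_s R_s^t|}+\sqrt{2\sum_s\E_s[\|d_s\|^2]}$ and the triangle inequality for the bias part.
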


\begin{lemma}[Lemma 11 in \citet{liu2023breaking}] \label{lem:Tail-a_t}
Define an event $a_t$ as 
$$ a_t := \left \{ \left| \sum_{s=1}^t U_s^t \right| 
\leq \left( \frac{4}{3} + 2 \sqrt{  \frac{ 5(\sigma/M)^p }{\gamma}   }  \right)
M \log \frac{4T}{\delta} \text{ or }
\sum_{s=1}^t \E_s[ (U_s^t)^2 ] > \frac{10 \sigma^p M^{2-p} }{ \gamma } \log \frac{4T}{\delta}
    \right \}.$$
Then, $$ \mathrm{Pr}[a_t] \geq 1 - \frac{\delta}{2T}, \quad \forall t \in [T],$$
where $\delta>0$.
\end{lemma}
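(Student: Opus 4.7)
The plan is to recognize $\{U_s^t\}_{s=1}^t$ as a martingale difference sequence and apply Freedman's inequality for bounded martingales. By Lemma~\ref{lem:Tail-Us}, the $U_s^t$ are martingale differences, and combining its estimate $|U_s^t| \leq \|(1-\gamma)^{t-s}\zeta_s^u\|$ with the unconditional bound $\|\zeta_s^u\| \leq 2M$ from Lemma~\ref{lem:b_t} gives $|U_s^t| \leq 2M(1-\gamma)^{t-s} \leq 2M$. Thus the Freedman range parameter is $R := 2M$.

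Freedman's inequality (two-sided form) states that for any $\lambda>0$ and any $V>0$,
$$\mathrm{Pr}\left[\left|\sum_{s=1}^{t} U_s^t\right| \geq \lambda \ \text{and}\ \sum_{s=1}^{t} \E_s[(U_s^t)^2] \leq V\right] \leq 2\exp\left(-\frac{\lambda^2/2}{V + R\lambda/3}\right).$$
I would then choose $V := \frac{10\sigma^p M^{2-p}}{\gamma}\log\frac{4T}{\delta}$ and set the right-hand side equal to $\delta/(2T)$. Solving the resulting quadratic in $\lambda$ and applying the elementary inequality $\sqrt{a+b}\leq \sqrt{a}+\sqrt{b}$ to decouple the range and variance contributions yields
$$\lambda \leq \frac{4M}{3}\log\frac{4T}{\delta} + \sqrt{2V\log\frac{4T}{\delta}} = \left(\frac{4}{3} + 2\sqrt{\frac{5(\sigma/M)^p}{\gamma}}\right) M \log\frac{4T}{\delta},$$
which is precisely the threshold appearing in the first clause of $a_t$. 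Complementing the event in Freedman's bound then gives $\mathrm{Pr}[a_t] \geq 1 - \delta/(2T)$ for each fixed $t \in [T]$.

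The main subtlety I anticipate is the random (rather than deterministic) nature of the predictable variance $\sum_{s=1}^t \E_s[(U_s^t)^2]$. The conditional variance estimate $\E_s[\|\zeta_s^u\|^2] \leq 10\sigma^p M^{2-p}$ from Lemma~\ref{lem:b_t} is valid only when $\|\nabla F(\bx_s)\| \leq M/2$, and this condition cannot be enforced a priori along the trajectory. The disjunctive formulation of $a_t$ is designed precisely to sidestep this issue: Freedman controls the probability that the partial sum is large \emph{while} the predictable variance stays below $V$, and if instead the variance exceeds $V$, the second clause of $a_t$ is satisfied automatically. Hence no further case analysis or stopping-time argument is needed beyond invoking Freedman's inequality in this conditional-on-small-variance form.
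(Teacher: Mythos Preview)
Your proposal is correct. The paper does not supply its own proof of this lemma---it is quoted verbatim as Lemma~11 of \citet{liu2023breaking}---and your argument via Freedman's inequality with range $R=2M$ and variance threshold $V=\frac{10\sigma^pM^{2-p}}{\gamma}\log\frac{4T}{\delta}$ is the standard route (and the one used in the cited source); your observation about the disjunctive structure of $a_t$ absorbing the case where the predictable variance is large is exactly the point.
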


\begin{lemma}[Lemma 12 in \citet{liu2023breaking}] \label{lem:Tail-b_t}
Define an event $b_t$ as
$$ b_t := \left \{ \left| \sum_{s=1}^t  R_s^t  \right|  
\leq \left( \frac{16}{3} + 4 \sqrt{  \frac{ 5 (\sigma/M)^p  }{\gamma}   }  \right)
M^2 \log \frac{4T}{\delta} 
\text{ or } \sum_{s=1}^t \E_s[ (R_s^t)^2  ] > \frac{ 40 \sigma^p M^{4-p} }{\gamma} \log \frac{4T}{\delta} \right \}. 
$$
Then, $$ \mathrm{Pr}[b_t] \geq 1 - \frac{\delta}{2T}, \quad \forall t \in [T],$$
where $\delta>0$.
\end{lemma}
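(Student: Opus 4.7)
Fix $t \in [T]$. The plan is to apply Freedman's inequality to the martingale difference sequence $\{R_s^t\}_{s=1}^t$ (adapted to the filtration $\{\mathcal{F}_s\}$), together with the classical ``either the cumulative variance is controlled or the event holds trivially'' device that handles the fact that we do not have a uniform a priori bound on $\sum_s \E_s[(R_s^t)^2]$.

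\textbf{Step 1 (MDS and almost-sure bound).} By construction, $R_s^t = Y_s - \E_s[Y_s]$ with $Y_s := \|(1-\gamma)^{t-s}\zeta_s^u\|^2 \ge 0$, so $\{R_s^t\}_s$ is a martingale difference sequence. By Lemma~\ref{lem:b_t} we have $\|\zeta_s^u\| \le 2M$ unconditionally (this bound does not require any hypothesis on $\|\nabla F(\bx_t)\|$), hence $0 \le Y_s \le 4M^2(1-\gamma)^{2(t-s)} \le 4M^2$ a.s., and therefore $|R_s^t| \le 4M^2$ a.s.

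\textbf{Step 2 (Freedman with the cutoff trick).} I will invoke the two-sided Freedman inequality in the following form: for a MDS $\{X_s\}$ with $|X_s|\le B$ a.s.,
\[
\Pr\!\left[\Big|\sum_{s=1}^t X_s\Big| \ge \lambda \ \text{ and }\ \sum_{s=1}^t \E_s[X_s^2] \le v\right] \ \le\ 2\exp\!\left(-\frac{\lambda^2}{2v + \tfrac{2}{3}B\lambda}\right).
\]
I apply this to $X_s := R_s^t$ with $B := 4M^2$ and
\[
v \ :=\ \frac{40\,\sigma^p M^{4-p}}{\gamma}\log\frac{4T}{\delta}, \qquad \lambda \ :=\ \left(\frac{16}{3} + 4\sqrt{\frac{5(\sigma/M)^p}{\gamma}}\right)M^2\log\frac{4T}{\delta}.
\]
A direct algebraic check of the inversion $\lambda \ge \sqrt{2vL} + \tfrac{2}{3}B L$ (with $L=\log(4T/\delta)$, taking the two-sided version to match the leading $\tfrac{16}{3}$ coefficient in the absolute-deviation bound) shows that the Freedman exponent is at most $-\log(4T/\delta)$, so the right-hand side is at most $\delta/(2T)$.

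\textbf{Step 3 (Union of the two alternatives).} The event $b_t^c$ is the conjunction of ``$|\sum_s R_s^t| > \lambda$'' and ``$\sum_s \E_s[(R_s^t)^2] \le v$''. Freedman's inequality above is precisely a bound on the probability of this conjunction, giving $\Pr[b_t^c] \le \delta/(2T)$ and hence $\Pr[b_t] \ge 1 - \delta/(2T)$, as claimed.

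\textbf{Main obstacle.} The substance of the proof is tuning the constants so that the chosen $\lambda$ matches the exact coefficients appearing in the statement of $b_t$; in particular, the $\tfrac{16}{3}$ term in $\lambda$ traces to the $\tfrac{2B}{3}L$ linear-in-$L$ part of the Freedman inversion (with $B=4M^2$, after combining the two-sided tails), while the $4\sqrt{5(\sigma/M)^p/\gamma}$ term traces to $\sqrt{2vL}$ using $v$ above. Everything else is bookkeeping: the a.s.\ bound on $R_s^t$ is immediate from the $2M$-bound on $\zeta_s^u$, and the ``or'' structure of the event is the standard workaround for the absence of an a.s.\ control on the predictable quadratic variation.
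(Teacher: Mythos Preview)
Your proposal is correct and is the standard Freedman-inequality argument for this type of statement. The paper itself does not include a proof of this lemma: it is quoted verbatim as Lemma~12 of \citet{liu2023breaking} and used as a black box, so there is no in-paper proof to compare against. Your Freedman-with-cutoff argument is precisely the mechanism behind such bounds, and the ingredients you identify (the unconditional bound $\|\zeta_s^u\|\le 2M$ giving $|R_s^t|\le 4M^2$, and the ``or'' structure of $b_t$ matching the event in Freedman's inequality) are exactly right. One minor remark: with the two-sided Freedman inequality you wrote, the linear term actually yields $\tfrac{2}{3}B\,L = \tfrac{8}{3}M^2\log\tfrac{4T}{\delta}$, not $\tfrac{16}{3}$, so the constant in the statement carries an extra factor of $2$ of slack; since a larger threshold only makes the event $b_t$ easier, your argument still proves the stated bound without modification.
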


\begin{lemma} \label{lem:Tail-Gap2}
Let $\gamma_t = \gamma$, $\eta_t = \eta$, and $\beta_{1,t}=\beta$ for some constants $\gamma>0$, $\eta>0$, and $\beta>0$. Then, for any $\tau \in [T]$,
\begin{align*}
     \eta \sum_{t=1}^{\tau} \mathcal{G}(\bx_t) + F \left( \bx_{\tau+1} \right)
    & \leq 
     F(\bx_1) + \frac{L\tau\eta^2 D^2}{2} + 
    \frac{ \eta D \beta \| \nabla F(\bx_1) \| }{\gamma}
    \\ & \qquad +
    \frac{ \eta D \beta}{1-\gamma} \sum_{t=1}^{\tau} 
    \left(   (1-\gamma) \left \| \sum_{s=1}^t (1-\gamma)^{t-s} Z_s   \right \|  + \gamma \left \| \sum_{s=1}^t (1-\gamma)^{t-s} \zeta_s  \right \|
    \right)
    \\ & \qquad + \eta D \left( 1 -  \frac{\beta}{1-\gamma} \right) \sum_{t=1}^{\tau} \| \zeta_t \|.
\end{align*}
\end{lemma}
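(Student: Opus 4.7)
The plan is to start from the standard Frank--Wolfe descent inequality derived from $L$-smoothness, exactly as in the proof of Theorem~\ref{th: Lion_Muon_as_FW}. Specifically, I would apply smoothness to $\bx_{t+1} = (1-\eta)\bx_t + \eta \bu_t$, use the optimality of $\bu_t$ as the minimizer of $\langle \bv, \hat{\bg}_t \rangle$ over $\mathcal{C}$, and compare against $\hat{\bv}_t := \arg\max_{\bv \in \mathcal{C}} \langle \bv, -\nabla F(\bx_t)\rangle$ to peel off a Frank--Wolfe gap term and a residual gradient-error term controlled by the diameter $D$. This yields the per-step bound
$$ F(\bx_{t+1}) \,\le\, F(\bx_t) - \eta\, \mathcal{G}(\bx_t) + \eta D \,\|\nabla F(\bx_t) - \hat{\bg}_t\| + \tfrac{L\eta^2 D^2}{2}. $$

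Next, I would decompose the estimator error. From the definition $\hat{\bg}_t = \tfrac{\beta}{1-\gamma}\bg_t + (1-\tfrac{\beta}{1-\gamma})\bar{\bg}_t$, a direct rewrite gives $\nabla F(\bx_t) - \hat{\bg}_t = -\tfrac{\beta}{1-\gamma}\epsilon_t - (1 - \tfrac{\beta}{1-\gamma})\zeta_t$, and the triangle inequality bounds its norm by $\tfrac{\beta}{1-\gamma}\|\epsilon_t\| + (1-\tfrac{\beta}{1-\gamma})\|\zeta_t\|$. I then substitute the recursion of Lemma~\ref{lem:tail_3} for $\epsilon_t$, splitting it into the initialization piece $(1-\gamma)^t \epsilon_0$ (with $\|\epsilon_0\| = \|\nabla F(\bx_1)\|$), the bias accumulation $(1-\gamma)\sum_{s=1}^t (1-\gamma)^{t-s} Z_s$, and the noise accumulation $\gamma\sum_{s=1}^t (1-\gamma)^{t-s} \zeta_s$, applying the triangle inequality once more to bound the norm of the sum by the sum of norms of these three pieces.

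Finally I sum the resulting per-iteration bound from $t=1$ to $\tau$. Telescoping the left-hand side yields $F(\bx_{\tau+1}) - F(\bx_1)$, the smoothness contributions accumulate to $\tfrac{L\tau\eta^2 D^2}{2}$, and the $(1-\tfrac{\beta}{1-\gamma})\|\zeta_t\|$ piece becomes the last term in the target inequality. For the initialization contribution I use the geometric-series bound $\sum_{t=1}^{\tau} (1-\gamma)^t \le \tfrac{1-\gamma}{\gamma}$, which makes the prefactor collapse: $\tfrac{\eta D \beta}{1-\gamma}\cdot \tfrac{1-\gamma}{\gamma}\,\|\nabla F(\bx_1)\| = \tfrac{\eta D \beta\|\nabla F(\bx_1)\|}{\gamma}$, matching the stated term exactly. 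The remaining two sums over $\|\sum_s (1-\gamma)^{t-s} Z_s\|$ and $\|\sum_s (1-\gamma)^{t-s} \zeta_s\|$ are kept as is, since the rest of the proof of Theorem~\ref{thm:SFW-Clipping} will control them via Lemma~\ref{lem:sum_zs} and Lemma~\ref{lem:Tail-Us} respectively, and via the high-probability bounds in Lemmas~\ref{lem:Tail-a_t} and~\ref{lem:Tail-b_t}. Rearranging puts the statement in the claimed form.

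I do not expect a substantive obstacle here; the argument is essentially bookkeeping that combines the Frank--Wolfe descent lemma, a triangle-inequality decomposition of the estimator error, and the momentum-error recursion already established in Lemma~\ref{lem:tail_3}. The mildly delicate points are (i) using $\beta \in [0, 1-\gamma]$ so that both convex-combination weights $\tfrac{\beta}{1-\gamma}$ and $1-\tfrac{\beta}{1-\gamma}$ are nonnegative and the triangle inequality is applied cleanly, and (ii) matching the $\tfrac{\eta D \beta}{\gamma}$ constant on the initialization term via the exact geometric sum. Everything else is termwise aggregation.
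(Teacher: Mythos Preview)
Your proposal is correct and follows essentially the same approach as the paper: the Frank--Wolfe descent inequality from smoothness, the convex-combination decomposition of $\nabla F(\bx_t)-\hat{\bg}_t$ into $\tfrac{\beta}{1-\gamma}\|\epsilon_t\|+(1-\tfrac{\beta}{1-\gamma})\|\zeta_t\|$, the momentum-error expansion of $\epsilon_t$, the triangle inequality on its three pieces, and the geometric-series bound $\sum_{t\ge 1}(1-\gamma)^t\le\tfrac{1-\gamma}{\gamma}$ to collapse the initialization term. The paper's proof invokes Lemma~\ref{lem:tail_1} rather than Lemma~\ref{lem:tail_3} for the $\epsilon_t$ recursion, but the two statements are identical in conclusion, so this is only a labeling difference.
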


\begin{proof}
From \eqref{final eq of lion 1} , we have
\begin{align}
    F \left( \bx_{t+1} \right)
    &   \leq F \left( \bx_{t} \right) - \eta \mathcal{G}(\bx_t) + \eta D \| \nabla F \left( \bx_{t} \right) - \hat{\bg}_t  \| + \frac{L}{2} \eta^2 D^2. \label{h-1}
\end{align}
Furthermore, 
\begin{align}
     \| \nabla F \left( \bx_{t} \right) - \hat{\bg}_t  \|
    &=  \| \nabla F \left( \bx_{t} \right) - \bg_t + \bg_t - \hat{\bg}_t  \| \notag \\
    &=  \left\| \nabla F \left( \bx_{t} \right) - \bg_t + \left( 1 -  \frac{\beta}{1-\gamma} \right) \left( \bg_t - \overline{\bg}_t \right)  \right\|  \notag \\
    &=  \left\| \nabla F \left( \bx_{t} \right) - \bg_t + \left( 1 -  \frac{\beta}{1-\gamma} \right) \left( \bg_t - \nabla F \left( \bx_{t} \right)  +  \nabla F \left( \bx_{t} \right) - \overline{\bg}_t \right)  \right\| \notag \\
    &=  \left\|  \frac{\beta}{1-\gamma} \left( \nabla F \left( \bx_{t} \right) - \bg_t \right) + \left( 1 -  \frac{\beta}{1-\gamma} \right) \left( \nabla F \left( \bx_{t} \right) - \overline{\bg}_t \right)  \right\|  \notag \\
    &\leq  \left( \frac{\beta}{1-\gamma} \right) \left\|  \nabla F \left( \bx_{t} \right) - \bg_t  \right\| 
    + \left( 1 -  \frac{\beta}{1-\gamma} \right) \left\|   \nabla F \left( \bx_{t} \right) - \overline{\bg}_t \right\|. \label{h-2}
\end{align} 
Combining \eqref{h-1} and \eqref{h-2}, we have
\begin{align}
    F \left( \bx_{t+1} \right)
    &   \leq F \left( \bx_{t} \right) - \eta \mathcal{G}(\bx_t) + \eta D 
    \left(  
    \left( \frac{\beta}{1-\gamma} \right) \underbrace{ \left\|  \nabla F \left( \bx_{t} \right) - \bg_t  \right\| }_{ = \| \epsilon_t \| }
    + \left( 1 -  \frac{\beta}{1-\gamma} \right)
    \underbrace{
     \left\|   \nabla F \left( \bx_{t} \right) - \overline{\bg}_t \right \|
     }_{ = \| \zeta_t \| }
      \right) \notag \\
     & \quad + \frac{L}{2} \eta^2 D^2. 
 \end{align}
Summing the above inequalities from $t=1$ to $\tau$, we have
\begin{align}
    & \eta \sum_{t=1}^{\tau} \mathcal{G}(\bx_t) + F \left( \bx_{\tau+1} \right) \notag
    \\ & \leq 
    F(\bx_1) + \frac{L\tau\eta^2 D^2}{2} + \frac{\eta D \beta}{1-\gamma} \sum_{t=1}^{\tau} \| \epsilon_t \|
    +  \eta D \left( 1 -  \frac{\beta}{1-\gamma} \right) \sum_{t=1}^{\tau} \| \zeta_t \| \notag
    \\ & 
    \overset{(a)}{\leq} 
    F(\bx_1) + \frac{L\tau\eta^2 D^2}{2} \notag
    \\ &
    \qquad + \frac{\eta D \beta}{1-\gamma} \sum_{t=1}^{\tau} 
    \left \|  
    (1-\gamma)^t \epsilon_0 + (1-\gamma) \left( \sum_{s=1}^t (1-\gamma)^{t-s} Z_s   \right) + \gamma \left( \sum_{s=1}^t (1-\gamma)^{t-s} \zeta_s  \right)
      \right \| \notag
    \\ & \qquad  +  \eta D \left( 1 -  \frac{\beta}{1-\gamma} \right) \sum_{t=1}^{\tau} \| \zeta_t \| \notag
    \\ & 
    \leq 
    F(\bx_1) + \frac{L\tau\eta^2 D^2}{2}  \notag
    \\ &
    \qquad + \frac{\eta D \beta}{1-\gamma} \sum_{t=1}^{\tau} 
    \left( 
    (1-\gamma)^t \left \| \epsilon_0  \right \| + (1-\gamma) \left \| \sum_{s=1}^t (1-\gamma)^{t-s} Z_s   \right \|  + \gamma \left \| \sum_{s=1}^t (1-\gamma)^{t-s} \zeta_s  \right \|
    \right) \notag
    \\ & \qquad
     +  \eta D \left( 1 -  \frac{\beta}{1-\gamma} \right) \sum_{t=1}^{\tau} \| \zeta_t \| \notag
    \\ & 
    \overset{(b)}{\leq} 
    F(\bx_1) + \frac{L\tau\eta^2 D^2}{2} + 
    \frac{ \eta D \beta \| \nabla F(\bx_1) \| }{\gamma} \notag 
    \\ &
    \qquad +
    \frac{ \eta D \beta}{1-\gamma}  \sum_{t=1}^{\tau}
    \left(   (1-\gamma) \left \| \sum_{s=1}^t (1-\gamma)^{t-s} Z_s   \right \|  + \gamma \left \| \sum_{s=1}^t (1-\gamma)^{t-s} \zeta_s  \right \|
    \right) \notag
    \\ & \qquad +  \eta D \left( 1 -  \frac{\beta}{1-\gamma} \right) \sum_{t=1}^{\tau} \| \zeta_t \|, \notag
\end{align}
where (a) uses Lemma~\ref{lem:tail_1} and (b) uses that $\epsilon_0= -\nabla F(\bx_1) $ by the definition of $\epsilon_0$. 

\end{proof}

\begin{lemma}[Adapted from Lemma 7 in \citet{liu2023breaking}] \label{lem:tail_1}
Let $\gamma_t = \gamma$ and $\eta_t = \eta$ for some constants $\gamma>0$ and $\eta>0$. Then $$\epsilon_t = (1-\gamma)^t \epsilon_0 + (1-\gamma) \left( \sum_{s=1}^t (1-\gamma)^{t-s} Z_s   \right) + \gamma \left( \sum_{s=1}^t (1-\gamma)^{t-s} \zeta_s  \right). $$
\end{lemma}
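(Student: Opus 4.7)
The plan is to establish the identity by first deriving a one-step recursion for $\epsilon_t$ and then unrolling it. Since $\gamma_t = \gamma$ is constant, the momentum update in Step 6 of Algorithm~\ref{alg:SFW_Clipping} reads $\bg_t = (1-\gamma)\bg_{t-1} + \gamma \bar{\bg}_t$. Subtracting $\nabla F(\bx_t)$ from both sides and inserting $\pm (1-\gamma)\nabla F(\bx_{t-1})$ gives
\begin{equation*}
\epsilon_t = (1-\gamma)\bigl(\bg_{t-1} - \nabla F(\bx_{t-1})\bigr) + (1-\gamma)\bigl(\nabla F(\bx_{t-1}) - \nabla F(\bx_t)\bigr) + \gamma\bigl(\bar{\bg}_t - \nabla F(\bx_t)\bigr),
\end{equation*}
which by the definitions of $\epsilon_{t-1}$, $Z_t$, and $\zeta_t$ collapses to the key recursion $\epsilon_t = (1-\gamma)\epsilon_{t-1} + (1-\gamma) Z_t + \gamma \zeta_t$. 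This is exactly the one-step identity that also appeared inside the proof of Lemma~\ref{lem:tail_3}.

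Next, I would unroll this recursion from $t$ down to $1$. Iterating the formula $k$ times pulls out $k$ factors of $(1-\gamma)$ in front of $\epsilon_{t-k}$ while leaving a geometric-weighted tail of $Z_s$ and $\zeta_s$ contributions. After $t$ such substitutions, the residual error term becomes $(1-\gamma)^t \epsilon_0$, while the accumulated noise contributions combine into $\gamma \sum_{s=1}^t (1-\gamma)^{t-s} \zeta_s$ and the accumulated gradient-drift contributions combine into $(1-\gamma)\sum_{s=1}^t (1-\gamma)^{t-s} Z_s$, matching the claimed expression.

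The only care needed is at the boundary $t=1$: here $\epsilon_1 = \gamma \bar{\bg}_1 - \nabla F(\bx_1)$ since $\bg_0 = 0$, and we should check that the claimed formula reproduces this under the convention $\epsilon_0 = -\nabla F(\bx_1)$ (as defined earlier for Lemma~\ref{lem:tail_3}) together with $Z_1 = 0$, which is consistent with the initialization $\bx_1 = \bx_0$ used there. With this boundary case confirmed, the unrolling argument above yields the identity for every $t \in [T]$.

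There is no real obstacle: the lemma is a direct algebraic consequence of the momentum update and mirrors Lemma~\ref{lem:tail_3} verbatim. The only subtlety worth flagging in the write-up is the bookkeeping of the indices $s$, $t-s$, and the $(1-\gamma)$ versus $\gamma$ weightings, which follow cleanly from the recursion once it is set up.
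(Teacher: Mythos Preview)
Your approach---derive the one-step recursion $\epsilon_t = (1-\gamma)\epsilon_{t-1} + (1-\gamma)Z_t + \gamma\zeta_t$ and then unroll---is exactly what the paper does. The only point worth noting is contextual: the paper's proof of this particular lemma plugs in the momentum update from Algorithm~\ref{alg:SFW_Clipping_VR} (which carries the extra variance-reduction term $(1-\gamma)(\nabla f(\bx_t;\Xi_t)-\nabla f(\bx_{t-1};\Xi_t))$), not from Algorithm~\ref{alg:SFW_Clipping} as you do. Correspondingly, in that setting $Z_t$ silently becomes the martingale difference $\nabla f(\bx_t,\Xi_t)-\nabla f(\bx_{t-1},\Xi_t)-(\nabla F(\bx_t)-\nabla F(\bx_{t-1}))$, as spelled out later in Lemma~\ref{lem:Tail-Zs}. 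The recursion and the unrolling are identical in form either way, so your argument is a valid proof of the stated identity; it just instantiates it for Algorithm~\ref{alg:SFW_Clipping} (i.e., reproduces Lemma~\ref{lem:tail_3}), whereas the paper's version of Lemma~\ref{lem:tail_1} is the Algorithm~\ref{alg:SFW_Clipping_VR} instance. Be aware of this overloading of $Z_t$ when you later invoke the lemma inside the proof of Theorem~\ref{thm:SFW-Clipping-VarianceReduction}.
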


\begin{proof}
When $t \geq 2$, we have
\begin{align}
    \epsilon_t = \bg_t - \nabla F(\bx_t) & =
    \left(1- \gamma \right) \bg_{t-1} + \gamma \bar{\bg}_t  +
     (1-\gamma) 
    \left(  \nabla f(\bx_t; \Xi_t ) -  \nabla f \left( \bx_{t-1}; \Xi_t \right)
     \right) - \nabla F(\bx_t) \notag
     \\ &  = ( 1 - \gamma ) \epsilon_{t-1} + (1-\gamma) Z_t + \gamma \zeta_t \label{j0}
\end{align}
Recursively expanding the above equation from $t$ back to $1$ leads to the result.
We also note that $\eqref{j0}$ holds when $t=1$.

\end{proof}

\begin{theorem} 
Suppose Assumptions~\ref{assume:LG}-a,~\ref{assume:v}, and~\ref{assume:G} hold.
Set
$\gamma_t = \gamma = T^{\frac{-p}{3p-2}}, \,
\beta_{1,t} = \beta = (1-\gamma)(1- T^{ \frac{-p}{3p-2} }), \,
M = \frac{\sigma}{ \gamma^{1/p} } \vee 2G, \, \text{ and }
\eta_t = \eta  = \ \frac{1}{ \sqrt{LT} D } \wedge \frac{\gamma}{\beta} \frac{1}{D}
\wedge 
\frac{\sqrt{\gamma} }{ D \sqrt{\beta T L  } } 
\wedge
\frac{1-\gamma}{20 \gamma  D T  M  \log \frac{4T}{\delta} }
\wedge
\frac{1}{ 2TD (1-\frac{\beta}{1-\gamma}) M (1+\gamma) } .
$
Then, with probability at least $1-\delta$, Algorithm~\ref{alg:SFW_Clipping} has
$ \frac{1}{T} \sum_{t=1}^{T} \mathcal{G}(\bx_t) 
= O\left( \frac{\log \frac{T}{\delta}}{T^{ \frac{p-1}{3p-2} }}  \right)$, where $p \in (1,2]$.
\end{theorem}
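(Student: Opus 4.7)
The plan is to begin from Lemma~\ref{lem:Tail-Gap2}, applied with $\tau = T$. Rearranging, using $F(\bx_{T+1}) \geq F(\bx_*)$, and dividing through by $\eta T$, I obtain an upper bound on $\frac{1}{T}\sum_{t=1}^T \mathcal{G}(\bx_t)$ decomposed into five contributions: a function-suboptimality term $(F(\bx_1)-F(\bx_*))/(\eta T)$, a smoothness term $L\eta D^2/2$, an initialization term $D\beta G/(\gamma T)$ (using Assumption~\ref{assume:G} to replace $\|\nabla F(\bx_1)\|$), an accumulated drift term with prefactor $D\beta$ applied to $\sum_t \|\sum_{s=1}^t (1-\gamma)^{t-s} Z_s\|/T$, an accumulated noise term with prefactor $D\beta\gamma/(1-\gamma)$ applied to $\sum_t \|\sum_{s=1}^t (1-\gamma)^{t-s} \zeta_s\|/T$, and a residual noise term with prefactor $D(1-\beta/(1-\gamma))$ applied to $\sum_t \|\zeta_t\|/T$. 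The task reduces to controlling the last three with high probability and choosing $\eta, \gamma, \beta, M$ so that every piece is $\tilde{O}(T^{-(p-1)/(3p-2)})$.

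The drift term is dispatched immediately by Lemma~\ref{lem:sum_zs}, giving $\|\sum_{s=1}^t (1-\gamma)^{t-s} Z_s\| \leq LD\eta/\gamma$ uniformly in $t$. For the accumulated noise I would invoke the Freedman-type decomposition of Lemma~\ref{lem:Tail-Us}, splitting $\|\sum_s (1-\gamma)^{t-s}\zeta_s\|$ into a signed martingale, the square root of a quadratic-variation martingale, a predictable-variance square root, and an accumulated bias. Since $M \geq 2G$ and Assumption~\ref{assume:G} yield $\|\nabla F(\bx_t)\| \leq M/2$ almost surely, Lemma~\ref{lem:b_t} gives $\|\zeta_t^b\| \leq 2\sigma^p M^{1-p}$ and $\E_t[\|\zeta_t^u\|^2] \leq 10\sigma^p M^{2-p}$. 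Setting $M = \sigma/\gamma^{1/p} \vee 2G$ collapses these moments to $O(\gamma M)$ and $O(\gamma M^2)$ respectively, so after geometric summation in $s$ both the predictable-variance and bias pieces are $O(M)$. A union bound over $t \in [T]$ using Lemmas~\ref{lem:Tail-a_t} and~\ref{lem:Tail-b_t}, combined with the deterministic check that the predictable quadratic variations always sit below their thresholds, then controls the two martingale pieces by $O(M\log(T/\delta))$ with probability at least $1-\delta$. Hence $\|\sum_s (1-\gamma)^{t-s}\zeta_s\| = O(M\log(T/\delta))$ uniformly, and the individual residual is bounded by $\|\zeta_t\| \leq \|\zeta_t^u\| + \|\zeta_t^b\| = O(M)$.

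The last step is to verify that the five upper bounds on $\eta$ in the theorem statement are precisely what is needed to make each contribution in the final bound $\tilde{O}(T^{-(p-1)/(3p-2)})$ under $\gamma = T^{-p/(3p-2)}$, $\beta = (1-\gamma)(1-T^{-p/(3p-2)})$, and $M = \sigma/\gamma^{1/p} \vee 2G$. In particular, $\eta \leq 1/(\sqrt{LT}D)$ simultaneously tames the function-suboptimality and smoothness terms at rate $T^{-1/2}$, which is faster than the target for all $p \in (1,2]$; $\eta \leq \gamma/(\beta D)$ controls the initialization term; $\eta \leq \sqrt{\gamma/(\beta TL)}/D$ controls the drift contribution $LD^2\beta\eta/\gamma$; and the remaining two upper bounds on $\eta$ play a conservative role on the noise-related contributions. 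The main obstacle will be the bookkeeping across these five competing exponents and verifying that the choice $M = \sigma/\gamma^{1/p}$ forces both $D\gamma M$ and $D(1-\beta/(1-\gamma))M$ to equal $D\sigma T^{-(p-1)/(3p-2)}$, which is the algebraic identity that pins down the exponent $p/(3p-2)$ and explains why the noise moment $p$ enters through the combination $3p-2$ in the denominator.
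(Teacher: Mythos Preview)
Your proposal is correct and follows essentially the same route as the paper: both invoke Lemma~\ref{lem:Tail-Gap2}, dispatch the drift via Lemma~\ref{lem:sum_zs}, control the geometrically weighted noise sum through the martingale decomposition of Lemma~\ref{lem:Tail-Us} together with Lemmas~\ref{lem:b_t},~\ref{lem:Tail-a_t},~\ref{lem:Tail-b_t} (using $M\geq 2G$ so that $\|\nabla F(\bx_t)\|\leq M/2$ holds deterministically, collapsing the disjunctions in those lemmas), and then verify the exponent bookkeeping under the stated parameter choices. The only cosmetic difference is that the paper packages the union bound as an induction on~$\tau$ and bounds everything by a constant $\Phi$ before reading off the rate as $\Phi/(\eta T)$, whereas you take the union bound directly over $t\in[T]$ and check the rate term by term; since the predictable-variance thresholds in Lemmas~\ref{lem:Tail-a_t} and~\ref{lem:Tail-b_t} are never exceeded here, the induction carries no extra content and your streamlined presentation is equivalent.
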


\begin{proof}
First, let us denote $\Phi:=F(\bx_1) - \min_{\bx} F(\bx) +  4 + \| \nabla F(\bx_1)\|$. Furthermore, define the following events:
\begin{align}
    \event_{\tau}^F &:= \left \{   \eta \sum_{s=1}^t \mathcal{G}(\bx_s) 
    \leq \Phi, \quad \forall t \leq \tau \right \}; \qquad
    \event_{\tau}^A := \cap_{t=1}^\tau a_{t};    \qquad
    \event_{\tau}^B := \cap_{t=1}^\tau b_{t}.
\end{align}
Now we are going to use proof by induction to show that
$\event_{\tau}^E := \event_{\tau}^F \cap \event_{\tau}^A \cap \event_{\tau}^B$ holds for probability at least $1- \frac{2 \tau \delta  }{T}$ for any $\tau \in \{0,1,\dots,T\}$, which implies that
\begin{align}
    &\frac{1}{T} \sum_{t=1}^T \mathcal{G}(\bx_t) \notag 
    \leq \frac{ \Phi }{\eta T }  \notag \\
    &= O \left( 
    \frac{ \sqrt{L}D}{\sqrt{T}} \Phi \vee
     \frac{\beta}{\gamma} \frac{D}{T} \Phi \vee
     \frac{D \sqrt{L \beta } }{ \sqrt{\gamma} \sqrt{T} } \Phi \vee
     \frac{ \gamma D M \log \left( 4T/\delta \right) }{1 -\gamma} \Phi
     \vee         
     2D\left(1-\frac{\beta}{1-\gamma}\right) M (1+\gamma) \Phi
      \right) \notag
    \\ & 
    = O \left( \frac{ \sqrt{L}D}{T^{1/2}} \Phi \, \vee  
     \frac{D}{T^{\frac{2(p-1)}{3p-2}}} \Phi \, \vee
     \frac{D\sqrt{L}}{T^{\frac{p-1}{3p-2}}} \Phi
     \,  \vee \frac{\sigma D\log \left( 4T/\delta \right) }{ T^{ \frac{p-1}{3p-2} } } \Phi \, \vee \frac{ D G \log \left( 4T/\delta \right) }{ T^{ \frac{p}{3p-2}} } \Phi
    \, \vee \frac{D \sigma}{T^{ \frac{p-1}{3p-2} }} \Phi
    \, \vee \frac{D G}{T^{ \frac{p}{3p-2} }}\Phi 
      \right), \notag
\end{align}
where we used the parameter choice
$\gamma = T^{\frac{-p}{3p-2}}$, $M= \frac{\sigma}{ \gamma^{1/p} } \vee 2G$, and also that $1-\frac{\beta}{1-\gamma} = T^{-\frac{p}{3p-2}}$.

When $\tau=0$, we have $\event_0^E = \event_0^F = \{ 0 \leq \Phi \}$, which is trivially true.

Assume that at time $\tau-1 \in [T]$, with probability at least $1- \frac{2 (\tau-1) \delta  }{T} $, we have that the event $\event_{\tau-1}^E$ holds.
By Lemma~\ref{lem:Tail-a_t} and Lemma~\ref{lem:Tail-b_t}, 
each of the events $\event_{\tau}^A$ and $\event_{\tau}^B$ holds with probability at least $1-\frac{\delta}{2T}$.
Now consider the event $\event_{\tau-1}^E \cap \event_{\tau}^A \cap \event_{\tau}^B $, which holds with probability at least $1- \frac{2 \tau \delta }{T} $ by the union bound.
By Lemma~\ref{lem:Tail-Gap2}, 
under $\event_{\tau-1}^E \cap \event_{\tau}^A \cap \event_{\tau}^B $, we have
\begin{align}
     \eta \sum_{t=1}^{\tau} \mathcal{G}(\bx_t) + F \left( \bx_{\tau+1} \right)
    & \leq
     F(\bx_1) + \frac{L\tau\eta^2 D^2}{2} + 
    \frac{ \eta D \beta \| \nabla F(\bx_1) \| }{\gamma}
    + \eta D \left( 1 -  \frac{\beta}{1-\gamma} \right) \sum_{t=1}^{\tau} \| \zeta_t \| \notag
    \\ & \qquad +
    \frac{ \eta D \beta }{1-\gamma} \sum_{t=1}^{\tau} 
    \left(   (1-\gamma) \left \| \sum_{s=1}^t (1-\gamma)^{t-s} Z_s   \right \|  + \gamma \left \| \sum_{s=1}^t (1-\gamma)^{t-s} \zeta_s  \right \|
    \right) \notag
    \\ & \overset{(i)}{ \leq } 
     F(\bx_1) + \frac{L\tau \eta^2 D^2}{2} + 
    \frac{ \eta D \beta \| \nabla F(\bx_1) \| }{\gamma}
    + \eta D \left( 1 -  \frac{\beta}{1-\gamma} \right) \sum_{t=1}^{\tau} \| \zeta_t \| \notag
    \\ & \qquad +
    \eta D \beta \tau \frac{LD \eta}{\gamma}  
    + \frac{ \gamma \eta D \beta}{ 1 - \gamma} \sum_{t=1}^{\tau}  \left \| \sum_{s=1}^t (1-\gamma)^{t-s} \zeta_s  \right \| \notag 
    \\ & \overset{(ii)}{ \leq } 
     F(\bx_1) + \frac{L\tau \eta^2 D^2}{2} + 
    \frac{ \eta D \beta \| \nabla F(\bx_1) \| }{\gamma}
    + \eta D \left( 1 -  \frac{\beta}{1-\gamma} \right) \sum_{t=1}^{\tau} \| \zeta_t \| \notag
    \\ & \qquad +
    \eta D \beta \tau  \frac{LD \eta}{\gamma} 
    + \frac{ 20 \gamma \eta D \tau  M \beta}{ 1- \gamma } \log \frac{4T}{\delta} \notag
    \\ & \overset{(iii)}{ \leq } 
     F(\bx_1) + \frac{L\tau \eta^2 D^2}{2} + 
    \frac{ \eta D \beta \| \nabla F(\bx_1) \| }{\gamma}
    + 2\eta D \tau \left( 1 -  \frac{\beta}{1-\gamma} \right) 
    M (1+\gamma) \notag
    \\ & \qquad +
    \eta D \beta \tau  \frac{LD \eta}{\gamma} 
    + \frac{ 20 \gamma \eta D \tau  M \beta}{ 1 -\gamma} \log \frac{4T}{\delta},
     \label{Event E-3}
\end{align}
where (i) is by Lemma~\ref{lem:sum_zs},
(iii) is due to $\gamma \in (0,1)$ and that
$\| \zeta_t \| \leq \| \zeta_t^u \| + \| \zeta_t^b \| \leq 2 M + 2 \sigma^p M^{1-p}
\leq 2 M + 2 M \gamma
 $ by Lemma~\ref{lem:b_t} and the choice of $M$, 
 and (ii) is because by Lemma~\ref{lem:Tail-Us},
\begin{align}
    \left \| \sum_{s=1}^{t} (1-\gamma)^{t-s} \zeta_s \right \| 
    & \leq \left| \sum_{s=1}^t U_s^t \right|
    + \sqrt{ 2 \left| \sum_{s=1}^t R_s^t    \right| }
    + \sqrt{ 2 \sum_{s=1}^t \E_s[ \|(1-\gamma)^{t-s} \zeta_s^u  \|^2   ]      }
    + \left \| \sum_{s=1}^t (1-\gamma)^{t-s} \zeta_s^b \right \| \notag
    \\ & \overset{\clubsuit}{\leq} 
    \left( \frac{4}{3} + 2 \sqrt{  \frac{ 5(\sigma/M)^p }{\gamma}   }  \right) M
    \log \frac{4T}{\delta} 
    +
    \sqrt{ 2 \left( \frac{16}{3} + 4 \sqrt{  \frac{ 5 (\sigma/M)^p  }{\gamma}   }  \right)
    M^2 \log \frac{4T}{\delta}  } \notag 
    \\ & \qquad 
    + \sqrt{ 2 \sum_{s=1}^t (1-\gamma)^{2(t-s)} (10 \sigma^p M^{2-p} )      }
    + \frac{1}{\gamma} 2 \sigma^p M^{1-p} \notag
    \\ & \overset{\spadesuit}{\leq} 
    6 M \log \frac{4T}{\delta} 
    +
    6 M \sqrt{ \log \frac{4T}{\delta}  }
    + 5M 
    + 2M
    \leq 20 M  \log \frac{4T}{\delta}. \notag
\end{align}
Above, $\clubsuit$ holds for the following reason.
By the choice of $M$, we have $\| \nabla f(\bx_t) \| \leq \frac{M}{2}$, since $\bx_t \in \mathcal{C}$ and $\| \nabla f(\bx_t) \|\leq G \leq \frac{M}{2}$.

Hence, we can use Lemma~\ref{lem:b_t} to get
$\| \zeta^u_t \| \leq 2M$, $\| \zeta^b_t \| \leq 2 \sigma^p M^{1-p}$, and $\E_{t}\left[\| \zeta^u_t \|^2  \right] \leq 10 \sigma^p M^{2-p}$ for any $t \in [T]$.
Then, from  Lemma~\ref{lem:Tail-Us} and Lemma~\ref{lem:Tail-a_t}, we further have
\begin{align} 
    \sum_{s=1}^t \E_s[ (U_s^t)^2 ] 
    & \leq \sum_{s=1}^t  \E_s\left[ \| (1-\gamma)^{t-s} \zeta^u_s \|^2 \right]
    \leq \frac{10 \sigma^p M^{2-p}}{1- (1-\gamma)^2}
    \leq \frac{10 \sigma^p M^{2-p}}{\gamma} \label{Us bound-1}\\
    \sum_{s=1}^t \E_s[ (R_s^t)^2 ] 
    & \leq \sum_{s=1}^t  \E_s\left[ \| (1-\gamma)^{t-s} \zeta^u_s \|^4 \right]
    \leq \frac{40 \sigma^p M^{4-p}}{1 - (1-\gamma)^4 }
    \leq \frac{40 \sigma^p M^{4-p}}{\gamma} \label{Rs bound-1}.
\end{align}
Combining \eqref{Us bound-1}, \eqref{Rs bound-1}, Lemma~\ref{lem:Tail-a_t}, and Lemma~\ref{lem:Tail-b_t}, we have
\begin{align}
    \left| \sum_{s=1}^t U_s^t \right|
    \leq \left( \frac{4}{3} + 2 \sqrt{  \frac{ 5(\sigma/M)^p }{\gamma}   }  \right) M
    \log \frac{4T}{\delta} 
    \text{ and }
     \left| \sum_{s=1}^t R_s^t    \right|
    \leq 
    \left(  \frac{16}{3} + 4 \sqrt{  \frac{ 5 (\sigma/M)^p  }{\gamma}   }
    \right) M^2 \log \frac{4T}{\delta}, \notag
\end{align}
for any $t \leq \tau$, under the event $\event_{\tau-1}^E \cap \event_{\tau}^A \cap \event_{\tau}^B \cap \event_{\tau}^C $.
For $\spadesuit$, we used $\frac{ (\sigma/M)^p }{\gamma} \leq 1$ and that $\gamma \in (0,1)$.

To continue, let 
\begin{equation}
    \eta = \min \left\{ \frac{1}{ \sqrt{LT} D } , \frac{\gamma}{\beta} \frac{1}{D},
    \frac{\sqrt{\gamma} }{ D \sqrt{\beta T L  } }, 
    \frac{1-\gamma}{20 \gamma  D T  M \beta  \log \frac{4T}{\delta} },
    \frac{1}{ 2TD (1-\frac{\beta}{1-\gamma}) M (1+\gamma) } 
     \right\}. \notag
\end{equation}
Then, from \eqref{Event E-3}, we obtain
\begin{align}
    \eta \sum_{t=1}^{\tau} \mathcal{G}(\bx_t) 
    & \leq 
     F(\bx_1) + \frac{L\tau \eta^2 D^2}{2} + 
    \frac{ \eta D \beta \| \nabla F(\bx_1) \| }{\gamma}
    + 2\eta D \tau \left( 1 -  \frac{\beta}{1-\gamma} \right) 
    M (1+\gamma) \notag
    \\ & \qquad +
    \eta D \beta \tau  \frac{LD \eta}{\gamma} 
    + \frac{ 20 \gamma \eta D \tau  M \beta}{ 1 -\gamma} \log \frac{4T}{\delta}, \notag
    \\ & \leq \underbrace{  F(\bx_1) - \min_{\bx} F(\bx) + 4 + \| \nabla F(\bx_1)\| }_{=\Phi} , \notag
\end{align}
which means that $\event^F_{\tau}$ holds under   
the event $\event_{\tau-1}^E \cap \event_{\tau}^A \cap \event_{\tau}^B$. 
This also implies that 
$\event_{\tau}^E = \event_{\tau}^F \cap \event_{\tau}^A \cap \event_{\tau}^B$ holds for probability at least $1- \frac{2 \tau \delta  }{T}$.
Therefore, we have completed the induction.

\end{proof}

\subsection{\textsc{Lion++} and \textsc{Muon++}}

In this section, we present the algorithmic specifications of \textsc{Lion++} and \textsc{Muon++}, which are obtained as special cases of Algorithm~\ref{alg:SFW_Clipping_VR}.

\begin{figure}[H]
\vspace*{-4mm}
  \centering
\begin{minipage}{0.49\textwidth}
  \begin{algorithm}[H]
    \caption{\textsc{Lion++}} \label{alg:Lion++}
    \begin{algorithmic}
    \small
\STATE \textbf{Required:} Momentum parameters $\beta_1, \beta_2$, step-sizes $\{\eta_t\}$, weight decay parameter $\lambda$, and clipping parameter $M$.
\STATE \textbf{Initialize:} $\mathbf{m}_0 = 0$ and $\bx_1 \in \mathcal{C}$.
\FOR{$t=1,2,\dots$}
\STATE Sample $\Xi_t \sim \mathcal{D}$. 
\STATE Compute
$ \bar{\bg}_t = \left( 1 \wedge \frac{M}{ \| \nabla f(\bx_t; \Xi_t) \| } \right)\nabla f(\bx_t; \Xi_t )
$. 
\STATE Update $\bc_{t} = \beta_1 \mathbf{m}_{t-1} + (1-\beta_1) \bar{\bg}_{t} +
\;  \quad \; \text{ }  \; \quad \; 
\beta_1 \mathbbm{1}_{t\geq 2} \left(  \nabla f(\bx_t; \Xi_t ) -  \nabla f \left( \bx_{t-1}; \Xi_t \right)
\right).
$
\STATE Update $\bx_{t+1}  = \bx_t - \eta_t \left( \text{sign}(\bc_t) + \lambda \bx_t \right) $.
\STATE Update $\mathbf{m}_t = \beta_2 \mathbf{m}_{t-1} + (1-\beta_2) \bar{\bg}_{t} + 
\; \; \; \qquad \; \text{ }  \; \; \; 
 \beta_2 \mathbbm{1}_{t\geq 2}
\left(  \nabla f(\bx_t; \Xi_t ) -  \nabla f \left( \bx_{t-1}; \Xi_t \right)
 \right)
 $.
\ENDFOR 
    \end{algorithmic}
  \end{algorithm}
\end{minipage}%
\hfill
\begin{minipage}{0.49\textwidth}
  \begin{algorithm}[H]
\caption{\textsc{Muon++}} \label{alg:Muon++}
    \begin{algorithmic}
    \small 
\STATE \textbf{Required:} Momentum parameter $\mu$, step-sizes $\{\eta_t\}$, weight decay parameter $\lambda$, and clipping parameter $M$.
\STATE \textbf{Initialize:} $\mathbf{B}_0 = 0$ and $\mathbf{X}_1 \in \mathcal{C}$.
\FOR{$t=1,2,\dots$}
\STATE Sample $\Xi_t \sim \mathcal{D}$.
\STATE Compute 
$\mathbf{G}_{t} = \left( 1 \wedge \frac{M}{ \| \nabla f(\mathbf{X}_t; \Xi_t ) \| } \right) \nabla f(\mathbf{X}_t; \Xi_t ) 
$. 
\STATE Update $\mathbf{B}_t = \mu \mathbf{B}_{t-1} + \mathbf{G}_{t} + 
\;  \quad \; \text{ }  \; \quad \; 
\frac{\mu}{1-\mu} \mathbbm{1}_{t\geq 2}
\left(  \nabla f(\mathbf{X}_t; \Xi_t ) -  \nabla f \left( \mathbf{X}_{t-1}; \Xi_t \right)
 \right) $.
\STATE Update 
$\mathbf{O}_t = \arg\min_{\mathbf{A} \in \mathcal{O}_{m \times n}}\|\mathbf{A} - \mathbf{B}_t\|_F$.
\STATE Update $\mathbf{X}_{t+1} = \mathbf{X}_t - \eta_t \left( \mathbf{O}_t + \lambda \mathbf{X}_t \right)$.
\ENDFOR 
\end{algorithmic}
\end{algorithm}
\end{minipage}
\end{figure}

\subsection{Proof of Theorem~\ref{thm:SFW-Clipping-VarianceReduction}}

\begin{lemma} \label{lem:Tail-Zs}
Denote $D$ the diameter of the constraint set $\mathcal{C}$. 
For any $t \in [T]$, with probability at least $1-\frac{\delta}{T}$, it holds that
$$
\left \| \sum_{s=1}^t (1-\gamma)^{t-s} Z_s   \right \| 
\leq 9 \eta L D \left( \frac{\log \frac{3T}{\delta}}{\sqrt{2\gamma - \gamma^2} } \right). $$ 
\end{lemma}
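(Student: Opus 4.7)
The plan is to recognize that, in the variance-reduced algorithm, the quantity $Z_s$ is in fact a martingale difference sequence, and then invoke a Bernstein/Freedman-type concentration inequality for vector martingales, rather than the crude deterministic bound used in Lemma~\ref{lem:sum_zs}. Concretely, I would first re-derive the recursion for $\epsilon_t = \bg_t - \nabla F(\bx_t)$ from the STORM-style update in Algorithm~\ref{alg:SFW_Clipping_VR}: a short algebraic manipulation, paralleling Lemma~\ref{lem:tail_1}, gives $\epsilon_t = (1-\gamma)\epsilon_{t-1} + (1-\gamma)Z_t + \gamma\zeta_t$, where in the variance-reduced setting
\[
Z_s \;=\; \bigl[\nabla f(\bx_s;\Xi_s) - \nabla f(\bx_{s-1};\Xi_s)\bigr] \;-\; \bigl[\nabla F(\bx_s) - \nabla F(\bx_{s-1})\bigr].
\]
Since $\bx_{s-1}$ and $\bx_s$ are $\mathcal{F}_{s-1}$-measurable (they depend only on $\Xi_1,\dots,\Xi_{s-1}$) and $\Xi_s$ is independent, $\E[Z_s \mid \mathcal{F}_{s-1}] = 0$, so $\{Z_s\}$ is a martingale difference sequence.

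Next I would collect the two ingredients needed for a Freedman-type bound. Using Assumption~\ref{assume:LG}-a,b together with the update $\bx_s - \bx_{s-1} = \eta(\bu_{s-1} - \bx_{s-1})$ and $\|\bu_{s-1} - \bx_{s-1}\| \leq D$, the triangle inequality yields the a.s.\ bound $\|Z_s\| \leq 2L\eta D$, and the variance-vs-second-moment inequality gives
\[
\E_{s}\bigl[\|Z_s\|^2\bigr] \;\leq\; \E_s\bigl[\|\nabla f(\bx_s;\Xi_s) - \nabla f(\bx_{s-1};\Xi_s)\|^2\bigr] \;\leq\; L^2\eta^2 D^2.
\]
Setting $\xi_s := (1-\gamma)^{t-s}Z_s$, these become $\|\xi_s\|\leq 2L\eta D$ and the total predictable quadratic variation
\[
\sum_{s=1}^t \E_s\bigl[\|\xi_s\|^2\bigr] \;\leq\; L^2\eta^2 D^2 \sum_{s=1}^t (1-\gamma)^{2(t-s)} \;\leq\; \frac{L^2\eta^2 D^2}{2\gamma-\gamma^2}.
\]

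I would then apply a vector Bernstein/Freedman inequality (Pinelis in Hilbert space, or the standard scalar Freedman reduction to $\sup_{\|\bu\|\leq 1}\langle \bu, M_t\rangle$) to the martingale $M_t := \sum_{s=1}^t \xi_s$ to obtain, with probability at least $1-\delta/T$,
\[
\|M_t\| \;\lesssim\; \sqrt{\frac{L^2\eta^2 D^2 \log(T/\delta)}{2\gamma-\gamma^2}} \;+\; L\eta D\,\log(T/\delta).
\]
Since $2\gamma-\gamma^2 \leq 1$, the range term can be absorbed into the variance term; tracking constants (using $\log(3T/\delta)$ to dominate constant factors arising from the $\varepsilon$-net or the tail-integration) produces the stated form with the explicit factor $9$.

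The main obstacle I anticipate is the vector concentration step itself: it must yield a dimension-free bound (so that it applies in the Hilbert space $\mathcal{H}$ of the paper) while producing a \emph{single} $\log(T/\delta)$ factor rather than $\sqrt{\log}$, which forces a Bernstein- rather than Hoeffding-type estimate and requires combining the two regimes cleanly. The other sub-obstacle is verifying carefully that all of $\bx_s, \bx_{s-1}, \bu_{s-1}$ are $\mathcal{F}_{s-1}$-measurable under the VR update so the martingale property is clean; this is true here because the clipping only acts on $\nabla f(\bx_t;\Xi_t)$ and does not feed back into $\bx_{s-1}$ at step $s$. The remaining work is routine: bounding the geometric sum $\sum (1-\gamma)^{2(t-s)}$ and pinning down the constant $9$.
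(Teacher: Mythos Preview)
Your proposal is correct and follows essentially the same approach as the paper: verify that $Z_s$ is a martingale difference sequence with the a.s.\ bound $\|(1-\gamma)^{t-s}Z_s\|\leq 2\eta LD$ and conditional second moment bounded by $(1-\gamma)^{2(t-s)}\eta^2L^2D^2$, then apply a dimension-free vector martingale concentration inequality (the paper invokes Lemma~14 of Liu et al.\ (2023), exactly the Bernstein/Freedman-type bound you describe) and absorb the range term into the variance term using $2\gamma-\gamma^2\leq 1$. The only cosmetic difference is that the paper cites the concentration lemma directly rather than discussing Pinelis or an $\varepsilon$-net reduction.
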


\begin{proof}
The proof is a modification of the Proof of Lemma 9 in \citet{liu2023breaking}.
Recall that
$Z_s := \mathbbm{1}\{t\geq 2\} \left( \nabla f(\bx_s,\Xi_s) - \nabla f(\bx_{s-1},\Xi_s) - ( \nabla F(\bx_s) - \nabla F(\bx_{s-1})   \right)
$. Therefore, $\E_s[Z_s]=0$, and hence $\E_s[ (1-\gamma)^{t-s} Z_s ]=0$.
Furthermore,
\begin{align}
    \|  (1-\gamma)^{t-s} Z_s  \|  
    & \leq  \|   \nabla f(\bx_s,\Xi_s) - \nabla f(\bx_{s-1},\Xi_s)   \| 
    + \|   \nabla F(\bx_s) - \nabla F(\bx_{s-1})    \| \notag
    \\ & \overset{(i)}{\leq} 2 L \| \bx_s - \bx_{s-1} \| \notag
    \\ & = 2 L \eta \| \bx_{s-1} - \bu_{s-1} \| \notag
    \\ & \overset{(ii)}{\leq} 2 \eta L D, \notag
\end{align}
where (i) is due to the L-smoothness and (ii) uses that the diameter of the constraint set is bounded by $D$.

Furthermore,
\begin{align}
    \E_s\left[   \|  (1-\gamma)^{t-s} Z_s  \|^2    \right]
    & \leq (1-\gamma)^{2(t-s)}
    \E_s\left[ \left \|   \nabla f(\bx_s,\Xi_s) - \nabla f(\bx_{s-1},\Xi_s) - ( \nabla F(\bx_s) - \nabla F(\bx_{s-1})  \right \|^2 \right] \notag
    \\ & \leq 
    (1-\gamma)^{2(t-s)}
    \E_s\left[  \left \|   \nabla f(\bx_s,\Xi_s) - \nabla f(\bx_{s-1},\Xi_s) \right \|^2 \right] \notag
    \\ & \leq 
    (1-\gamma)^{2(t-s)}
    L^2 \left \| \bx_s - \bx_{s-1} \right \|^2 \notag
    \\ & \leq 
    (1-\gamma)^{2(t-s)}
    \eta^2 L^2 D^2, \notag
\end{align}
where the last inequality uses the update and the bound of the diameter of the constraint set $D$.

Since $Z_s$ is a martingale difference sequence, we can use Lemma 14 in \citet{liu2023breaking} (replicated in Lemma~\ref{lem:martingale} below) to obtain that 
the following holds for all $\tau \in [t]$ with probability at least $1-\delta$, 
\begin{align*}
    \left \| \sum_{s=1}^{\tau} (1-\gamma)^{t-s} Z_s    \right \|
    & \leq 6 \eta L D \log \frac{3}{\delta} +
    3 \sqrt{ \sum_{s=1}^{\tau}  (1-\gamma)^{2(t-s)} 
     \eta^2 L^2 D^2\log \frac{3}{\delta}
     }
    \\ &
    \leq  3 \eta L D \left(  2 \log \frac{3}{\delta}  + \sqrt{ \frac{\log \frac{3}{\delta}}{1-(1-\gamma)^2} } \right) 
    \\ & 
    \leq  9 \eta L D \left( \frac{\log \frac{3}{\delta}}{\sqrt{2\gamma - \gamma^2} } \right). 
\end{align*}
Then, set $\tau \gets t$ and $\delta \gets \delta/T$ leads to the result.

\end{proof}

\begin{lemma}[Lemma 14 in \citet{liu2023breaking}] \label{lem:martingale}
Suppose $X_{t \in [T]}$ is a martingale sequence adapted to the filtration $\mathcal{F}_{t\in [T]}$ in a Hilbert Space satisfying $\| X_t \| \leq R$ almost surely for some constant $R$ and $\E[ \| X_t \|^2 | \mathcal{F}_{t-1}] \leq \sigma^2$ almost surely for some constant $\sigma^2_t$. Then with probability at least 
$1-\delta$, for any $t \in [T]$,
\begin{equation*}
    \left \| \sum_{s=1}^t X_s \right \| \leq 3R \log \frac{3}{\delta} + 3 \sqrt{ \sum_{s=1}^T \sigma^2_s \log \frac{3}{\delta}  }.
\end{equation*}
\end{lemma}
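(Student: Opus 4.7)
This is a Hilbert-space Freedman/Bernstein inequality. The plan is to construct a scalar supermartingale from a smoothed function of the Hilbert-space norm of the running sum, apply Doob's maximal inequality, and then optimize the free parameter to invert the tail bound. Throughout, write $S_t := \sum_{s=1}^t X_s$, $V_t := \sum_{s=1}^t \sigma_s^2$, and recall $S_0 = 0$.

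First I would record the one-step second moment identity that the Hilbert structure affords: since $\|S_t\|^2 = \|S_{t-1}\|^2 + 2\langle S_{t-1},X_t\rangle + \|X_t\|^2$ and $\E[X_t\mid\mathcal{F}_{t-1}]=0$, we get $\E[\|S_t\|^2\mid\mathcal{F}_{t-1}] \le \|S_{t-1}\|^2 + \sigma_t^2$. This identity is the Hilbert-space replacement for the scalar ``variance bookkeeping'' that underlies Freedman's inequality.

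Next, for a parameter $\lambda \in (0,1/R]$, define the scalar process
\begin{equation*}
  M_t \;:=\; \cosh\!\bigl(\lambda \|S_t\|\bigr)\,\exp\!\Bigl(-\tfrac{\psi(\lambda R)}{R^2}\,V_t\Bigr),
  \qquad \psi(u) := e^u - 1 - u.
\end{equation*}
The key step is to verify that $(M_t)$ is a supermartingale. This follows from a Pinelis-type Taylor bound for the convex, rotation-invariant surrogate $\Phi_\lambda(x) := \cosh(\lambda\|x\|)$: using $\|S_t\|^2 = \|S_{t-1}\|^2 + 2\langle S_{t-1},X_t\rangle + \|X_t\|^2$, one shows
\begin{equation*}
  \Phi_\lambda(S_t) \;\le\; \Phi_\lambda(S_{t-1})\Bigl(1 + \lambda\,\tfrac{\langle S_{t-1},X_t\rangle}{\|S_{t-1}\|}\Bigr) + \tfrac{\psi(\lambda R)}{R^2}\|X_t\|^2\,\Phi_\lambda(S_{t-1}),
\end{equation*}
where the linear term vanishes in conditional expectation and the quadratic term is controlled using $\|X_t\|\le R$ a.s.\ together with the inequality $\cosh(a+b)-\cosh(a)\sinh-\text{stuff}\le \psi(|b|)\cdot\text{something}$. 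Taking $\E[\,\cdot\mid\mathcal{F}_{t-1}]$ and using $\E[\|X_t\|^2\mid\mathcal{F}_{t-1}]\le\sigma_t^2$ gives $\E[M_t\mid\mathcal{F}_{t-1}]\le M_{t-1}$.

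Doob's maximal inequality then yields, for any threshold $r>0$,
\begin{equation*}
  \Pr\!\Bigl[\max_{t\le T}\|S_t\| \ge r\Bigr] \;\le\; \frac{\E[M_0]}{\cosh(\lambda r)\,\exp(-\psi(\lambda R)V_T/R^2)} \;\le\; 2\exp\!\Bigl(-\lambda r + \tfrac{\psi(\lambda R)}{R^2}V_T\Bigr).
\end{equation*}
Optimizing $\lambda \in (0,1/R]$ (take $\lambda$ to be the minimum of $1/R$ and the root of the stationary condition) yields the standard Bernstein tail
\begin{equation*}
  \Pr\!\Bigl[\max_{t\le T}\|S_t\| \ge r\Bigr] \;\le\; 2\exp\!\Bigl(-\tfrac{r^2}{2(V_T+Rr/3)}\Bigr).
\end{equation*}
Setting the right-hand side equal to $\delta$ and solving the quadratic in $r$ gives $r \le \tfrac{2R}{3}\log(2/\delta) + \sqrt{2V_T\log(2/\delta)}$; bounding with the looser constants $3R\log(3/\delta)$ and $3\sqrt{V_T\log(3/\delta)}$ yields exactly the stated simultaneous-in-$t$ bound.

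The main obstacle is the Pinelis Taylor inequality for $\Phi_\lambda$: unlike the scalar case, $e^{\lambda S_t}$ is not even defined, so the surrogate $\cosh(\lambda\|\cdot\|)$ is forced, and verifying that the conditional expectation contracts requires carefully handling both the curvature of $\cosh$ and the ``angular'' term $\langle S_{t-1},X_t\rangle/\|S_{t-1}\|$. This is precisely where 2-smoothness of the Hilbert norm is essential, and is the step for which a one-line invocation of Pinelis (1994) is standard practice.
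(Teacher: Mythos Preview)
The paper does not prove this lemma at all: it is quoted verbatim as Lemma~14 of \citet{liu2023breaking} and used as a black box in the proof of Lemma~\ref{lem:Tail-Zs}. Your proposal sketches the Pinelis (1994) $\cosh$-supermartingale route, which is indeed the standard way such Hilbert-space Bernstein/Freedman bounds are obtained, and the strategy---build $M_t=\cosh(\lambda\|S_t\|)\exp(-\psi(\lambda R)V_t/R^2)$, show it is a supermartingale via the Pinelis Taylor inequality for $\cosh(\lambda\|\cdot\|)$, apply Doob, optimize $\lambda$, invert---is sound and recovers the stated constants after loosening. Since the paper offers no proof to compare against, there is nothing further to contrast; your sketch is an appropriate standalone justification, with the caveat that the conditional Taylor step you flag is genuinely the only nontrivial ingredient and would need either a full derivation or the Pinelis citation to be complete.
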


\begin{theorem} 
Suppose Assumptions~\ref{assume:LG}-a,~\ref{assume:LG}-b,~\ref{assume:v}, and~\ref{assume:G} hold.
Set
$\gamma_t = \gamma = T^{\frac{-p}{2p-1}}$, $\beta_{1,t} = \beta
= \left( 1 - \gamma \right) \left( 1 - T^{\frac{-p}{2p-1}} \right)$
, $M = \frac{\sigma}{ \gamma^{1/p} } \vee 2G$, 
and $\eta_t = \eta = \frac{1}{ \sqrt{LT} D } \wedge \frac{\gamma}{\beta} \frac{1}{D} 
\wedge
\frac{\gamma^{1/4} }{ D \sqrt{ 9 T L \beta  \log \frac{3T}{\delta} } }
\wedge
\frac{1-\gamma}{20 \gamma  D T  M \beta  \log \frac{4T}{\delta} }
\wedge
\frac{1}{2 TD \left( 1 -  \frac{\beta}{1-\gamma} \right)   M (1+\gamma)}.$
Then, with probability at least $1-\delta$, Algorithm~\ref{alg:SFW_Clipping_VR} has
$ \frac{1}{T} \sum_{t=1}^{T} \mathcal{G}(\bx_t) 
= O\left( \frac{\log \frac{T}{\delta}}{T^{ \frac{p-1}{2p-1} }}  \right)$, where $p \in (1,2]$.
\end{theorem}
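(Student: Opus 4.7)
The plan is to follow the same induction-over-good-events framework that was used for Theorem~\ref{thm:SFW-Clipping}, but to exploit the fact that, because of the variance-reduction term in the update of $\bg_t$, the process $Z_s$ that measures the drift $\nabla F(\bx_{s-1}) - \nabla F(\bx_s)$ in the error recursion is now replaced by a martingale difference sequence, so it concentrates with a better $\sqrt{\gamma}$ scaling instead of the deterministic $1/\gamma$ scaling. Concretely, I would first write down the error recursion $\epsilon_t := \bg_t - \nabla F(\bx_t)$. Using the update of $\bg_t$ in Algorithm~\ref{alg:SFW_Clipping_VR}, one gets
\[
\epsilon_t = (1-\gamma)\epsilon_{t-1} + \gamma \zeta_t + (1-\gamma) \widetilde Z_t,
\qquad
\widetilde Z_t := \mathbbm{1}_{t\ge 2}\!\left[\nabla f(\bx_t;\Xi_t) - \nabla f(\bx_{t-1};\Xi_t) - (\nabla F(\bx_t)-\nabla F(\bx_{t-1}))\right],
\]
which, upon unrolling with $\bx_1=\bx_0$, yields
\[
\epsilon_t = (1-\gamma)^t\epsilon_0 + (1-\gamma)\sum_{s=1}^t (1-\gamma)^{t-s}\widetilde Z_s + \gamma\sum_{s=1}^t (1-\gamma)^{t-s}\zeta_s,
\]
i.e.\ the analogue of Lemma~\ref{lem:tail_1}, but now with $\widetilde Z_s$ mean-zero conditionally on $\mathcal{F}_{s-1}$.

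Next I would rerun the one-step descent analysis from the proof of Theorem~\ref{thm:SFW-Clipping} verbatim (smoothness, bound on $\|\nabla F(\bx_t)-\hat{\bg}_t\|$ in terms of $\|\epsilon_t\|$ and $\|\zeta_t\|$, summation from $1$ to $\tau$), which produces a counterpart of Lemma~\ref{lem:Tail-Gap2}:
\[
\eta\sum_{t=1}^{\tau}\mathcal{G}(\bx_t) + F(\bx_{\tau+1}) \le F(\bx_1) + \tfrac{L\tau\eta^2D^2}{2} + \tfrac{\eta D\beta\|\nabla F(\bx_1)\|}{\gamma} + \tfrac{\eta D\beta}{1-\gamma}\sum_{t=1}^\tau\Big(\!(1-\gamma)\Big\|\!\sum_{s=1}^t(1-\gamma)^{t-s}\widetilde Z_s\Big\| + \gamma\Big\|\!\sum_{s=1}^t(1-\gamma)^{t-s}\zeta_s\Big\|\!\Big) + \eta D\big(1-\tfrac{\beta}{1-\gamma}\big)\sum_{t=1}^\tau\|\zeta_t\|.
\]
The noise term $\|\sum_s(1-\gamma)^{t-s}\zeta_s\|$ is controlled exactly as in the previous theorem through Lemmas~\ref{lem:Tail-Us}, \ref{lem:Tail-a_t}, \ref{lem:Tail-b_t}, yielding a bound of order $M\log(4T/\delta)$ on the event $\event_\tau^A\cap\event_\tau^B$, which needs Assumption~\ref{assume:G} (and the choice $M\ge 2G$) so that $\|\nabla F(\bx_t)\|\le M/2$ and Lemma~\ref{lem:b_t} is applicable. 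For the martingale $\widetilde Z_s$, I would invoke Lemma~\ref{lem:Tail-Zs} (whose proof requires Assumption~\ref{assume:LG}-b in order to pointwise bound $\|\widetilde Z_s\|\le 2\eta LD$ and $\E_s\|\widetilde Z_s\|^2\le \eta^2 L^2 D^2$) to get, with probability at least $1-\delta/T$, $\big\|\sum_{s=1}^t(1-\gamma)^{t-s}\widetilde Z_s\big\| \le 9\eta LD\,\log(3T/\delta)/\sqrt{2\gamma-\gamma^2}$. I add a new good event $\event_\tau^Z := \cap_{t\le\tau}\{\text{this bound holds}\}$ to the induction and define $\event_\tau^E := \event_\tau^F\cap\event_\tau^A\cap\event_\tau^B\cap\event_\tau^Z$; a union bound gives $\Pr[\event_\tau^E]\ge 1-3\tau\delta/T$.

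The induction then proceeds as in the proof of Theorem~\ref{thm:SFW-Clipping}: assuming $\event_{\tau-1}^E$, I plug the new $\widetilde Z_s$ bound and the $\zeta_s$ bound into the displayed inequality above to obtain
\[
\eta\sum_{t=1}^\tau \mathcal{G}(\bx_t) \le \Phi \;-\; F(\bx_{\tau+1}) + \min_{\bx}F(\bx)
\]
provided that each of the error terms
\[
\tfrac{L T\eta^2 D^2}{2},\;\; \tfrac{\eta D\beta\|\nabla F(\bx_1)\|}{\gamma},\;\; \tfrac{9\eta^2 T L D^2\beta\log(3T/\delta)}{\sqrt{\gamma}},\;\; \tfrac{20\gamma\eta D T M\beta\log(4T/\delta)}{1-\gamma},\;\; 2\eta D T\bigl(1-\tfrac{\beta}{1-\gamma}\bigr)M(1+\gamma)
\]
is at most a constant times $\Phi$, which is exactly what the five-way minimum defining $\eta$ in the theorem statement enforces (the third term is the one that is new compared to Theorem~\ref{thm:SFW-Clipping} and uses the $1/\sqrt{\gamma}$ improvement). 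This closes the induction and gives $\event_\tau^E$ with probability at least $1-3\tau\delta/T$. Setting $\tau=T$, dividing by $T\eta$, and substituting the prescribed $\gamma = T^{-p/(2p-1)}$, $\beta = (1-\gamma)(1-T^{-p/(2p-1)})$, $M = \sigma/\gamma^{1/p}\vee 2G$ balances all five terms and produces $\frac{1}{T}\sum_t \mathcal{G}(\bx_t) = O\bigl(\log(T/\delta) T^{-(p-1)/(2p-1)}\bigr)$ after a cosmetic rescaling of $\delta$. I expect the main obstacle to be bookkeeping: carefully verifying that each of the five upper bounds on $\eta$ corresponds to making the corresponding term $O(\Phi)$, and in particular that the new $\eta \le \gamma^{1/4}/(D\sqrt{9TL\beta\log(3T/\delta)})$ is the binding constraint in the regime that dictates the final exponent $(p-1)/(2p-1)$.
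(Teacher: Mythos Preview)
Your proposal is correct and mirrors the paper's proof almost exactly: the paper also unrolls $\epsilon_t$ into the three pieces (initial term, the now-martingale drift $Z_s$, and the clipped-noise $\zeta_s$), reuses Lemma~\ref{lem:Tail-Gap2} verbatim, introduces the extra good event $\event_\tau^C$ (your $\event_\tau^Z$) via Lemma~\ref{lem:Tail-Zs}, and closes the induction by checking that each of the five error terms is bounded by the corresponding upper bound on $\eta$. Two cosmetic points: the paper's union-bound bookkeeping gives $\Pr[\event_\tau^E]\ge 1-2\tau\delta/T$ rather than $1-3\tau\delta/T$ (since $a_\tau,b_\tau$ each cost $\delta/(2T)$ and $c_\tau$ costs $\delta/T$), and the third $\eta$-constraint is not in fact the uniquely binding one for the final exponent---terms 2, 4, and 5 all contribute the $T^{-(p-1)/(2p-1)}$ rate after substitution---but neither affects correctness.
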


\begin{proof}
The proof uses some of the technical tools developed in the analysis of Normalized SGD with clipping and momentum by \citet{liu2023breaking}.

First, let us denote $\Phi:=F(\bx_1) - \min_{\bx} F(\bx) + 4  + \| \nabla F(\bx_1)\|$. Furthermore, define the following events:
\begin{align*}
\event_{\tau}^F &:= \left \{   \eta \sum_{s=1}^t \mathcal{G}(\bx_s) 
\leq \Phi, \quad \forall t \leq \tau \right \}; \qquad
\event_{\tau}^A := \cap_{t=1}^\tau a_{t};    \qquad
\event_{\tau}^B := \cap_{t=1}^\tau b_{t};  \\
\event_{\tau}^C &:= \cap_{t=1}^\tau c_{t} := 
\cap_{t=1}^\tau 
\left \{ \left \| \sum_{s=1}^t (1-\gamma)^{t-s} Z_s   \right \| 
\leq 9 \eta L D \left( \frac{\log \frac{3T}{\delta}}{\sqrt{2\gamma - \gamma^2} } \right)  \right \}. 
\end{align*}
Now we are going to use proof by induction to show that
$\event_{\tau}^E := \event_{\tau}^F \cap \event_{\tau}^A \cap \event_{\tau}^B \cap \event_{\tau}^C$ holds for probability at least $1- \frac{2 \tau \delta  }{T}$ for any $\tau \in \{0,1,\dots,T\}$, which implies that
\begin{align*}
    &\frac{1}{T} \sum_{t=1}^T \mathcal{G}(\bx_t) 
    \leq \frac{ \Phi }{\eta T }
    \\ & = O \left( 
    \frac{ \sqrt{L}D}{\sqrt{T}} \Phi \vee
     \frac{\beta}{\gamma} \frac{D}{T} \Phi \vee
     \frac{ \sqrt{L \beta \log \frac{3T}{\delta} } D }{ \gamma^{1/4} \sqrt{T} } \Phi \vee \frac{ \gamma D M \beta \log \left( 4T/\delta \right)  }{1-\gamma} \Phi       
     \vee 2 D \left(1 - \frac{\beta}{1-\gamma} \right) M(1+\gamma)
      \right)
    \\ & 
    = \Phi \cdot O \left( \frac{ \sqrt{L}D}{T^{1/2}} \, \vee  
     \frac{D}{T^{\frac{p-1}{2p-1}}}  \, \vee
     \frac{\sqrt{L  \log \frac{3T}{\delta} } D }{T^{\frac{3p-2}{4(2p-1)}}} 
     \,  \vee \frac{\sigma D  \log \left( 4T/\delta \right) }{ T^{ \frac{p-1}{2p-1} } }  \, \vee \frac{ D G  \log \left( 4T/\delta \right) }{ T^{ \frac{p}{2p-1}} }
     \vee \frac{D\sigma}{ T^{\frac{p-1}{2p-1}} } \vee  \frac{DG}{ T^{\frac{p}{2p-1}}}
      \right),
\end{align*}
where we used the parameter choice
$\gamma = T^{\frac{-p}{2p-1}}$ and $M= \frac{\sigma}{ \gamma^{1/p} } \vee 2G$
for the last line 
and also that
$ 1 - \frac{\beta}{1-\gamma}  = T^{-\frac{p}{2p-1}}$.

When $\tau=0$, we have $\event_0^E = \event_0^F = \{ 0 \leq \Phi \}$, which is trivially true.

Assume that at time $\tau-1 \in [T]$, with probability at least $1- \frac{2 (\tau-1) \delta  }{T} $, we have that the event $\event_{\tau-1}^E$ holds.
By Lemma~\ref{lem:Tail-a_t}, Lemma~\ref{lem:Tail-b_t}, and Lemma~\ref{lem:Tail-Zs},
each of the events $\event_{\tau}^A$ and $\event_{\tau}^B$ holds with probability at least $1-\frac{\delta}{2T}$, while $\event_{\tau}^C$ holds with 
probability at least $1-\frac{\delta}{T}$.
Now consider the event $\event_{\tau-1}^E \cap \event_{\tau}^A \cap \event_{\tau}^B \cap \event_{\tau}^C $, which holds with probability at least $1- \frac{2 \tau \delta }{T} $ by the union bound.
By Lemma~\ref{lem:Tail-Gap2}, 
under $\event_{\tau-1}^E \cap \event_{\tau}^A \cap \event_{\tau}^B \cap \event_{\tau}^C $, we have
\begin{align}
     \eta \sum_{t=1}^{\tau} \mathcal{G}(\bx_t) + F \left( \bx_{\tau+1} \right)
    & \leq
     F(\bx_1) + \frac{L\tau\eta^2 D^2}{2} + 
    \frac{ \eta D \beta \| \nabla F(\bx_1) \| }{\gamma}
    + \eta D \left( 1 -  \frac{\beta}{1-\gamma} \right) \sum_{t=1}^{\tau} \| \zeta_t \| \notag
    \\ & \qquad +
    \frac{ \eta D \beta }{1-\gamma} \sum_{t=1}^{\tau} 
    \left(   (1-\gamma) \left \| \sum_{s=1}^t (1-\gamma)^{t-s} Z_s   \right \|  + \gamma \left \| \sum_{s=1}^t (1-\gamma)^{t-s} \zeta_s  \right \|
    \right) \notag
    \\ & \overset{(i)}{ \leq } 
     F(\bx_1) + \frac{L\tau \eta^2 D^2}{2} + 
    \frac{ \eta D \beta \| \nabla F(\bx_1) \| }{\gamma}
    + \eta D \left( 1 -  \frac{\beta}{1-\gamma} \right) \sum_{t=1}^{\tau} \| \zeta_t \| \notag
    \\ & \qquad +
    \eta D \beta \tau \frac{ 9 \eta L D \log \frac{3T}{\delta} }{ \sqrt{2\gamma - \gamma^2} } 
    + \frac{ \gamma \eta D \beta}{ 1 - \gamma} \sum_{t=1}^{\tau}  \left \| \sum_{s=1}^t (1-\gamma)^{t-s} \zeta_s  \right \| \notag
    \\ & \overset{(ii)}{ \leq } 
     F(\bx_1) + \frac{L\tau \eta^2 D^2}{2} + 
    \frac{ \eta D \beta \| \nabla F(\bx_1) \| }{\gamma}
    + \eta D \left( 1 -  \frac{\beta}{1-\gamma} \right) \sum_{t=1}^{\tau} \| \zeta_t \| \notag
    \\ & \qquad +
    \eta D \beta \tau \frac{ 9 \eta L D \log \frac{3T}{\delta} }{ \sqrt{2\gamma - \gamma^2} } 
    + \frac{ 20 \gamma \eta D \tau  M \beta}{ 1- \gamma } \log \frac{4T}{\delta} \notag
    \\ & \overset{(iii)}{ \leq } 
     F(\bx_1) + \frac{L\tau \eta^2 D^2}{2} + 
    \frac{ \eta D \beta \| \nabla F(\bx_1) \| }{\gamma}
    + 2\eta D \tau \left( 1 -  \frac{\beta}{1-\gamma} \right) 
    M (1+\gamma) \notag
    \\ & \qquad +
    \eta D \beta \tau \frac{ 9 \eta L D \log \frac{3T}{\delta} }{ \sqrt{\gamma } } 
    + \frac{ 20 \gamma \eta D \tau  M \beta}{ 1 -\gamma} \log \frac{4T}{\delta},
     \label{Event E add}
\end{align}
where (i) holds under $\event_{\tau}^C$,
(iii) is due to $\gamma \in (0,1)$ and that
$\| \zeta_t \| \leq \| \zeta_t^u \| + \| \zeta_t^b \| \leq 2 M + 2 \sigma^p M^{1-p}
\leq 2 M + 2 M \gamma
 $ by Lemma~\ref{lem:b_t} and the choice of $M$, 
 and (ii) is because by Lemma~\ref{lem:Tail-Us},
\begin{align*}
    \left \| \sum_{s=1}^{t} (1-\gamma)^{t-s} \zeta_s \right \| 
    & \leq \left| \sum_{s=1}^t U_s^t \right|
    + \sqrt{ 2 \left| \sum_{s=1}^t R_s^t    \right| }
    + \sqrt{ 2 \sum_{s=1}^t \E_s[ \|(1-\gamma)^{t-s} \zeta_s^u  \|^2   ]      }
    + \left \| \sum_{s=1}^t (1-\gamma)^{t-s} \zeta_s^b \right \|
    \\ & \overset{\clubsuit}{\leq} 
    \left( \frac{4}{3} + 2 \sqrt{  \frac{ 5(\sigma/M)^p }{\gamma}   }  \right) M
    \log \frac{4T}{\delta} 
    +
    \sqrt{ 2 \left( \frac{16}{3} + 4 \sqrt{  \frac{ 5 (\sigma/M)^p  }{\gamma}   }  \right)
    M^2 \log \frac{4T}{\delta}  }
    \\ & \qquad 
    + \sqrt{ 2 \sum_{s=1}^t (1-\gamma)^{2(t-s)} (10 \sigma^p M^{2-p} )      }
    + \frac{1}{\gamma} 2 \sigma^p M^{1-p}
    \\ & \overset{\spadesuit}{\leq} 
    6 M \log \frac{4T}{\delta} 
    +
    6 M \sqrt{ \log \frac{4T}{\delta}  }
    + 5M 
    + 2M
    \leq 20 M  \log \frac{4T}{\delta}.
\end{align*}
Above, $\clubsuit$ holds for the following reason.
By the choice of $M$, we have $\| \nabla f(\bx_t) \| \leq \frac{M}{2}$, since $\bx_t \in \mathcal{C}$ and $\| \nabla f(\bx_t) \|\leq G \leq \frac{M}{2}$.
Hence, we can use Lemma~\ref{lem:b_t} to get
$\| \zeta^u_t \| \leq 2M$, $\| \zeta^b_t \| \leq 2 \sigma^p M^{1-p}$, and $\E_{t}\left[\| \zeta^u_t \|^2  \right] \leq 10 \sigma^p M^{2-p}$ for any $t \in [T]$.
Then, from  Lemma~\ref{lem:Tail-Us} and Lemma~\ref{lem:Tail-a_t}, we further have
\begin{align} 
    \sum_{s=1}^t \E_s[ (U_s^t)^2 ] 
    & \leq \sum_{s=1}^t  \E_s\left[ \| (1-\gamma)^{t-s} \zeta^u_s \|^2 \right]
    \leq \frac{10 \sigma^p M^{2-p}}{1- (1-\gamma)^2}
    \leq \frac{10 \sigma^p M^{2-p}}{\gamma} \label{Us bound}\\
    \sum_{s=1}^t \E_s[ (R_s^t)^2 ] 
    & \leq \sum_{s=1}^t  \E_s\left[ \| (1-\gamma)^{t-s} \zeta^u_s \|^4 \right]
    \leq \frac{40 \sigma^p M^{4-p}}{1 - (1-\gamma)^4 }
    \leq \frac{40 \sigma^p M^{4-p}}{\gamma} \label{Rs bound}.
    \end{align}
    Combining \eqref{Us bound}, \eqref{Rs bound}, Lemma~\ref{lem:Tail-a_t}, and Lemma~\ref{lem:Tail-b_t}, we have
    \begin{align*}
    \left| \sum_{s=1}^t U_s^t \right|
    \leq \left( \frac{4}{3} + 2 \sqrt{  \frac{ 5(\sigma/M)^p }{\gamma}   }  \right) M
    \log \frac{4T}{\delta} 
    \text{ and }
     \left| \sum_{s=1}^t R_s^t    \right|
    \leq 
    \left(  \frac{16}{3} + 4 \sqrt{  \frac{ 5 (\sigma/M)^p  }{\gamma}   }
    \right) M^2 \log \frac{4T}{\delta}, 
\end{align*}
for any $t \leq \tau$, under the event $\event_{\tau-1}^E \cap \event_{\tau}^A \cap \event_{\tau}^B \cap \event_{\tau}^C $.
For $\spadesuit$, we used $\frac{ (\sigma/M)^p }{\gamma} \leq 1$ and that $\gamma \in (0,1)$.
To continue, let 
\begin{equation*}
    \eta = \min \left\{ \frac{1}{ \sqrt{LT} D } , \frac{\gamma}{\beta} \frac{1}{D},
    \frac{\gamma^{1/4} }{ D \sqrt{9 T L \beta  \log \frac{3T}{\delta} } }, 
    \frac{1-\gamma}{20 \gamma  D T  M \beta  \log \frac{4T}{\delta} },
    \frac{1}{2 TD \left( 1 -  \frac{\beta}{1-\gamma} \right) 
     M (1+\gamma)  } 
     \right\}.
\end{equation*}
Then, from \eqref{Event E add}, we obtain
\begin{align*}
    \eta \sum_{t=1}^{\tau} \mathcal{G}(\bx_t) 
    & \leq 
     F(\bx_1) + \frac{L\tau \eta^2 D^2}{2} + 
    \frac{ \eta D \beta \| \nabla F(\bx_1) \| }{\gamma}
    + 2 \eta D \tau \left( 1 -  \frac{\beta}{1-\gamma} \right) 
    M(1+\gamma)
    \\ & \qquad +
    \eta D \beta \tau \frac{ 9 \eta L D \log \frac{3T}{\delta} }{ \sqrt{\gamma } } 
    + \frac{ 20 \gamma \eta D \tau  M \beta}{ 1 -\gamma} \log \frac{4T}{\delta},
    \\ & \leq \underbrace{  F(\bx_1) - \min_{\bx} F(\bx) + 4   + \| \nabla F(\bx_1)\| }_{=\Phi}, 
\end{align*}
which means that $\event^F_{\tau}$ holds under   
the event $\event_{\tau-1}^E \cap \event_{\tau}^A \cap \event_{\tau}^B \cap \event_{\tau}^C $. 
This also implies that 
$\event_{\tau}^E = \event_{\tau}^F \cap \event_{\tau}^A \cap \event_{\tau}^B \cap \event_{\tau}^C$ holds for probability at least $1- \frac{2 \tau \delta  }{T}$.
Therefore, we have completed the induction.

\end{proof}

\newpage

\section{Hyperparameter Settings for nanoGPT} \label{app:hyperparams}

For the nanoGPT experiments we use the following training configuration:
\begin{itemize}
    \item \textbf{nanoGPT:} https://github.com/karpathy/nanoGPT/tree/master
\end{itemize}

For all algorithms, the learning rate, and weight decay were tuned using grid search. For \textsc{Lion+} and \textsc{Muon+}, we additionally tuned the gradient clipping threshold. All experiments were run on one NVIDIA A100 GPU. Table~\ref{tab:hyperparameter_search} shows the full hyperparameter search space, while Table~\ref{tab:hyperparameters_selected} presents the chosen hyperparameters for each algorithm along with the complete experimental configuration. Figure~\ref{fig:nanoGPT2} shows the training and validation loss curves averaged across five different seed values.

\vspace{10mm}

\begin{table}[h]
\centering
\begin{tabular}{lc}
\noalign{\hrule height 1pt}
Learning rate                & \{1e-1, 5e-2,1e-2, 5e-3, 1e-3, 5e-4, 1e-4, 5e-5, 1e-5\}        \\
Gradient clipping threshold  & \{1, 2, 3, 4, 5, $\infty$\}    \\
Weight decay                 & \{1e-1, 1e-2, 1e-3\}    \\
Batch size                   & 64    \\
\noalign{\hrule height 1pt}
\end{tabular}
\caption{Hyperparameter search space for nanoGPT training.}
\label{tab:hyperparameter_search}
\end{table}

\begin{table}[h]
\centering
\begin{tabular}{l c c c c}
\noalign{\hrule height 1pt}
 & Lion & \textsc{Lion+} & Muon & \textsc{Muon+} \\
\noalign{\hrule height 0.5pt}
Max Learning rate                & 5e-5  & 5e-5 & 5e-2 & 5e-2   \\
Min Learning rate                & 5e-8  & 5e-8 & 5e-4 & 5e-4   \\
Gradient clipping threshold  & $\infty$ & 4 & $\infty$ & 5  \\
Weight decay                 & 1e-3 & 1e-2 & 1e-1 & 1e-1 \\
\noalign{\hrule height 0.5pt}
$\left(\beta_1,\beta_2\right)$ & (0.95, 0.98) & (0.95, 0.98) & - & - \\
$\mu$ & - & - & 0.95 & 0.95 \\
Nesterov & - & - & No & No \\
\noalign{\hrule height 0.5pt}
Batch size                   & \multicolumn{4}{c}{64}   \\
Block size                   & \multicolumn{4}{c}{256}   \\
Layers                      & \multicolumn{4}{c}{6}\\
Heads                       & \multicolumn{4}{c}{6}\\
Embedding dimension         & \multicolumn{4}{c}{384}\\
Warmup steps                 & \multicolumn{4}{c}{100}   \\
Learning rate schedule       & \multicolumn{4}{c}{cosine} \\
Dropout                      & \multicolumn{4}{c}{0.2}\\
\noalign{\hrule height 1pt}
\end{tabular}
\caption{Hyperparameters selected by grid search and setup for nanoGPT training.}
\label{tab:hyperparameters_selected}
\end{table}

\begin{figure}[t]
  \centering
  \begin{subfigure}[b]{0.49\textwidth}
    \includegraphics[width=\textwidth]{plots/training_loss_colors.pdf}
  \end{subfigure}
  \hfill
  \begin{subfigure}[b]{0.49\textwidth}
    \includegraphics[width=\textwidth]{plots/validation_loss_colors.pdf}
  \end{subfigure}
  \vspace{-2pt}
  \caption{Loss curves for nanoGPT training on the Shakespeare dataset. We plotted the training and validation loss per 10 and 50 steps, respectively. The results are averaged across five seed values.}
  \label{fig:nanoGPT2}
  \vspace{-8pt}
\end{figure}

\newpage 

\section{Additional Experiments: Empirical Investigation of Stochastic FW with Clipping and Variance Reduction} \label{app:addexp}

\subsection{Synthetic Heavy-tailed Dataset}

In this section, we conduct numerical experiments on a synthetic function to assess the performance of \textsc{Lion++} and \textsc{Muon++}. Specifically, we use the experimental setup from \citet{hübler2025gradientclippingnormalizationheavy}. All experiments are conducted on one NVIDIA A100 GPU. The selected hyperparameters are reported in Tables~\ref{tab:hyperparameters_synthetic_Lion} and \ref{tab:hyperparameters_synthetic_Muon}.
\newline

\noindent \textbf{Lion vs. \textsc{Lion++}.} 
We evaluate the performance of Lion and \textsc{Lion++} on the simple function $F \left( \bx \right) = \frac{1}{2} \|\bx \|^2$, $\bx \in \mathbb{R}^d$, for $d=1$ and $d=1000$, where $\| \cdot \|$ denotes the Euclidean norm. We introduce three distinct types of noise to the gradient: standard Normal, component-wise symmetrized Pareto with $p=2.5$, and component-wise symmetrized Pareto with $p=1.5$. We note that the standard Normal distribution serves as a model for light-tailed noise, while the Pareto distributions are employed to capture heavy-tailed noise, with finite ($p=2.5$) and infinite ($p=1.5$) variance. We run each algorithm $10^5$ times for $T=100$ iterations, and use as convergence criterion the average gradient norm across all $T$ iterations. 

Figures~\ref{fig:Lion_normal}-\ref{fig:Lion_pareto_1.5}  illustrate the convergence behavior of Lion and \textsc{Lion++} by displaying the median, $\delta$ and $1-\delta$ (with $\delta = 10^{-4}$) quantiles of the algorithm runs, based on the average gradient norm at $T=100$.
\newline

\noindent \textbf{Muon vs. \textsc{Muon++}.}
To assess the performance of Muon and \textsc{Muon++}, we use the function $F \left( \mathbf{X} \right) = \frac{1}{2} \|\mathbf{X} \|^2$, $\mathbf{X} \in \mathbb{R}^{n \times n}$, with $n=2$ and $n=30$, where $\| \cdot \|$ denotes the Frobenius norm. We apply the same three types of noise as in the previous section and run each algorithm $10^5$ times for $T=100$ iterations. 

Figures~\ref{fig:Muon_normal}-
\ref{fig:Muon_pareto_1.5} show the convergence behavior of Muon and \textsc{Muon++} by displaying the median, $\delta$ and $1-\delta$ (with $\delta = 10^{-4}$) quantiles of the algorithm runs, based on the average gradient norm at $T=100$.
\newline

\noindent \textbf{Results.} 
The results indicate that in the small-dimensional setting, with $d=1$ and $n=2$, respectively, all comparison algorithms perform similarly under light-tailed noise. When the noise is heavy-tailed but has bounded variance, Lion++ and Muon++ show a slight performance advantage. In the case of heavy-tailed noise with unbounded variance, Lion++ and Muon++ demonstrate improved convergence and greater robustness to noise, as reflected in the upper quantiles. For the high-dimensional case, with d=1000 and n=30, respectively, Lion++ and Muon++ tend to achieve lower average gradient norms under both light-tailed and heavy-tailed noise conditions, with the improvement being more notable in the unbounded variance case.

\begin{figure}[H]
  \centering
  \begin{subfigure}[b]{0.49\textwidth}
    \includegraphics[width=\textwidth]{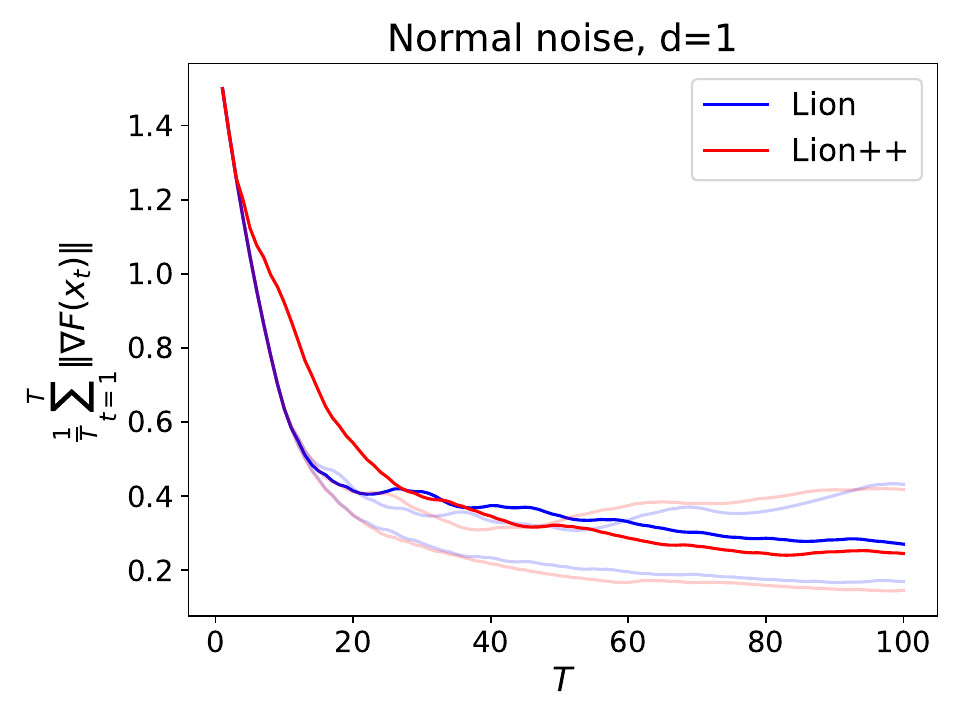}
  \end{subfigure}
  \hfill
  \begin{subfigure}[b]{0.49\textwidth}
    \includegraphics[width=\textwidth]{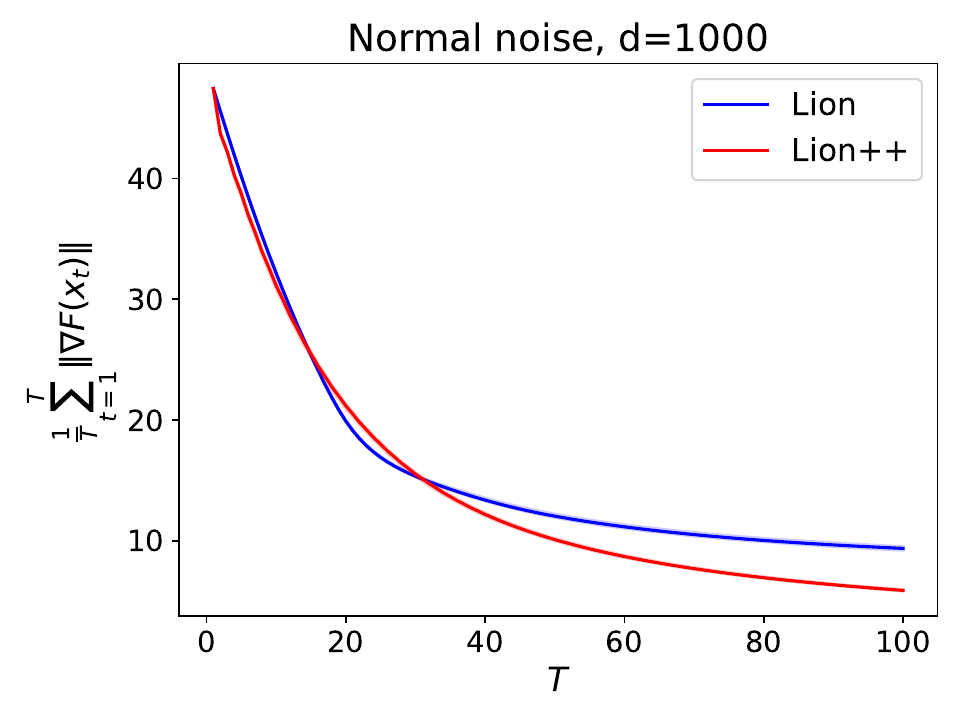}
  \end{subfigure}
  \setlength{\abovecaptionskip}{4pt}
  \caption{Standard Normal noise.}
  \label{fig:Lion_normal}
  \vspace{-3mm}
\end{figure}

\begin{figure}[H]
  \centering
  \begin{subfigure}[b]{0.49\textwidth}
    \includegraphics[width=\textwidth]{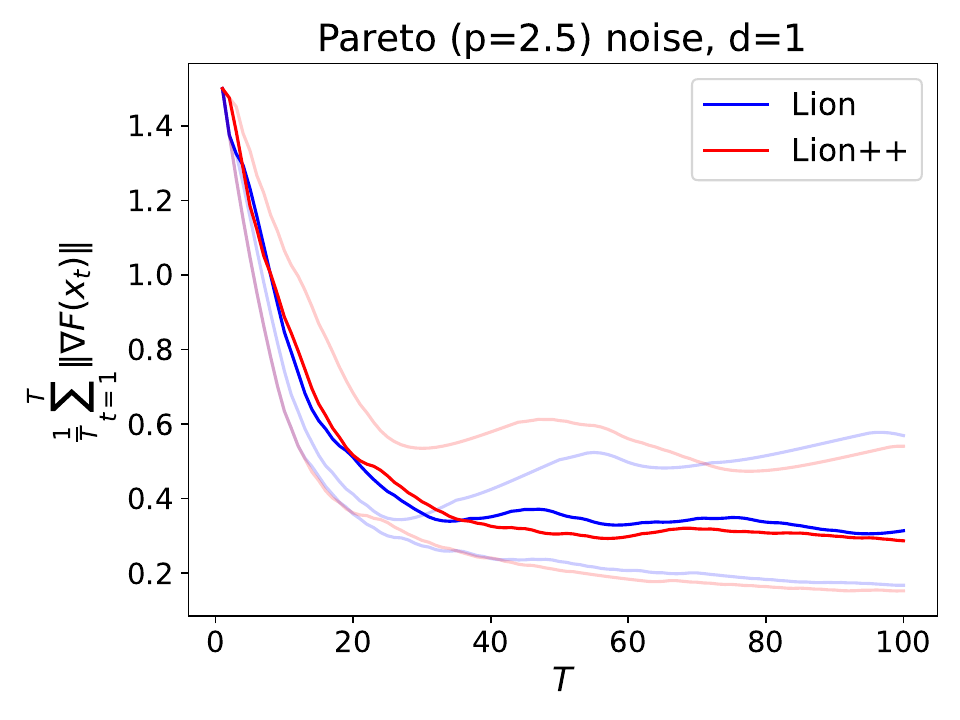}
  \end{subfigure}
  \hfill
  \begin{subfigure}[b]{0.49\textwidth}
    \includegraphics[width=\textwidth]{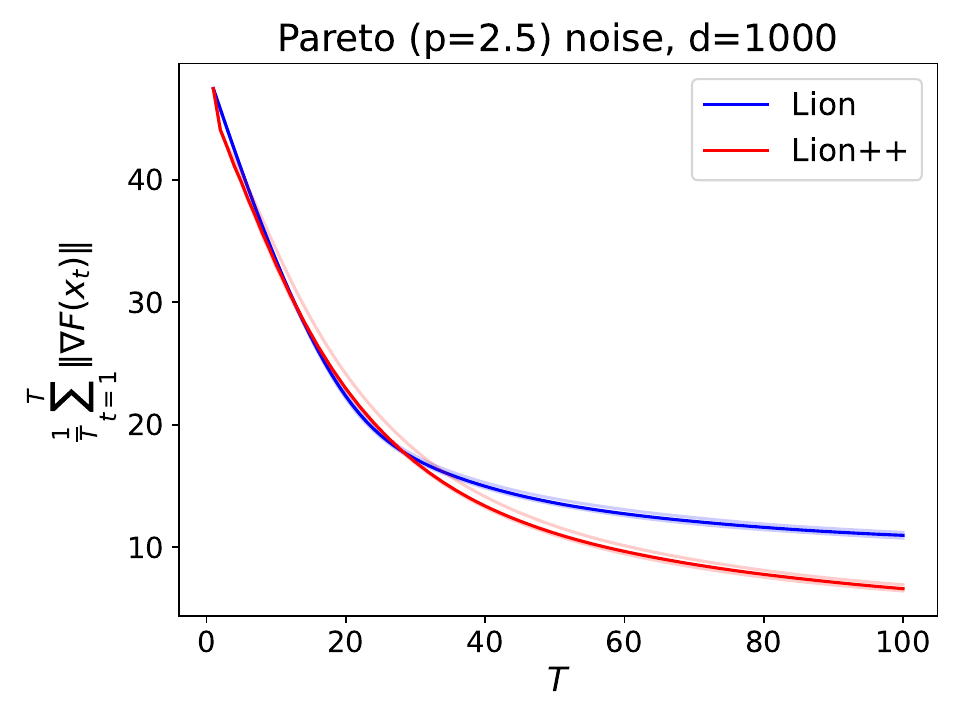}
  \end{subfigure}
  \setlength{\abovecaptionskip}{4pt}
  \caption{Pareto (p=2.5) noise.}
  \label{fig:Lion_pareto_2.5}
\end{figure}

\begin{figure}[H]
  \centering
  \begin{subfigure}[b]{0.49\textwidth}
    \includegraphics[width=\textwidth]{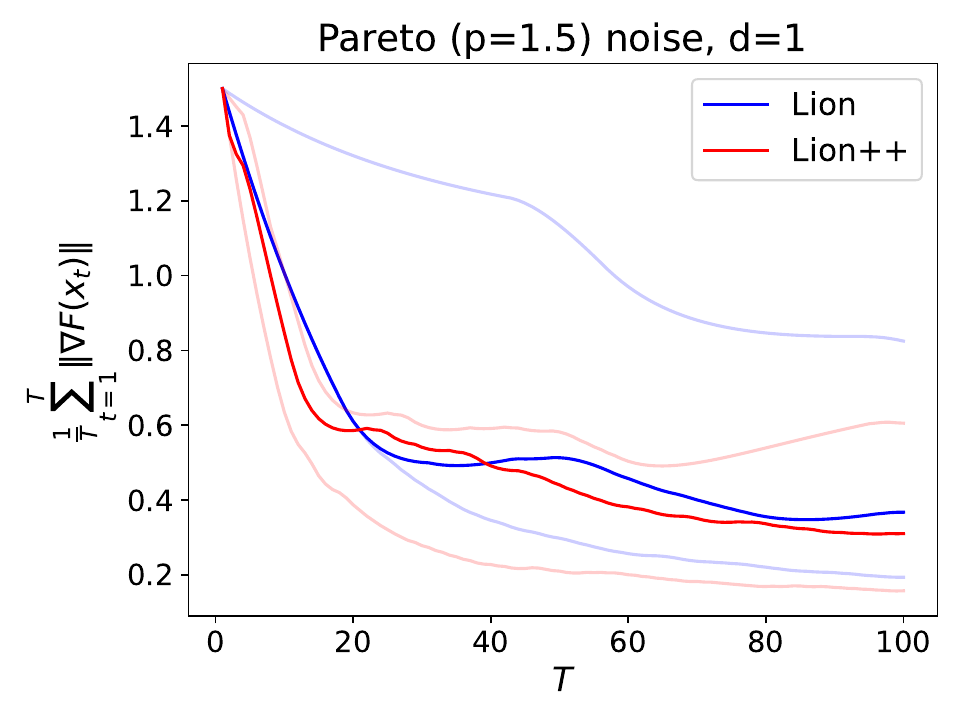}
  \end{subfigure}
  \hfill
  \begin{subfigure}[b]{0.49\textwidth}
    \includegraphics[width=\textwidth]{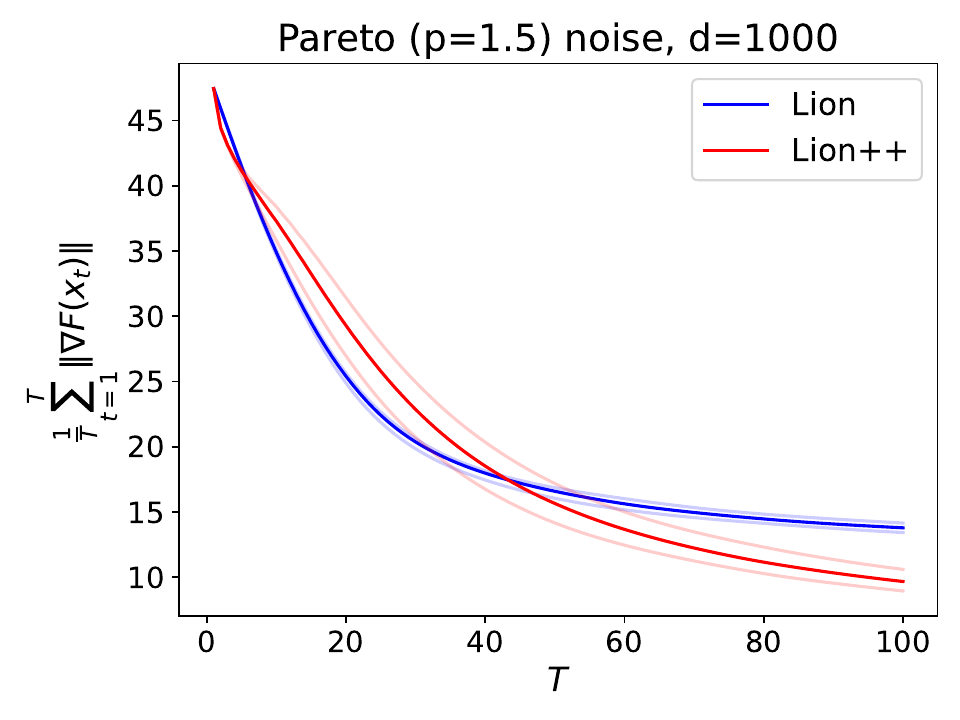}
  \end{subfigure}
  \setlength{\abovecaptionskip}{4pt}
  \caption{Pareto (p=1.5) noise.}
  \label{fig:Lion_pareto_1.5}
\end{figure}

\begin{figure}[H]
  \centering
  \begin{subfigure}[b]{0.49\textwidth}
    \includegraphics[width=\textwidth]{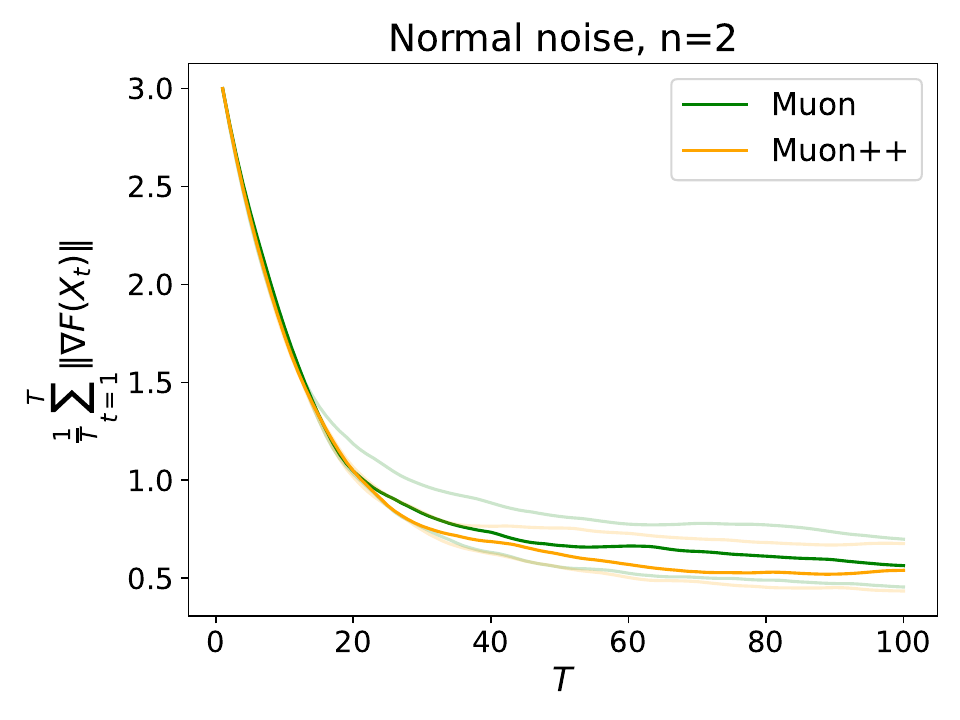}
  \end{subfigure}
  \hfill
  \begin{subfigure}[b]{0.49\textwidth}
    \includegraphics[width=\textwidth]{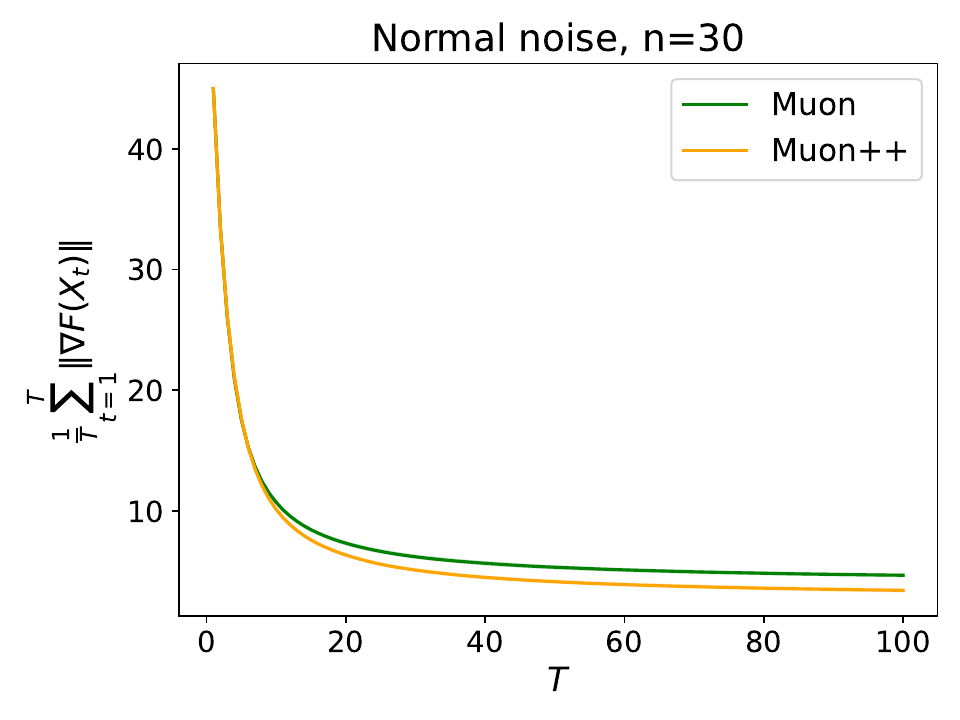}
  \end{subfigure}
  \setlength{\abovecaptionskip}{4pt}
  \caption{Standard Normal noise.}
  \label{fig:Muon_normal}
\end{figure}

\begin{figure}[H]
  \centering
  \begin{subfigure}[b]{0.49\textwidth}
    \includegraphics[width=\textwidth]{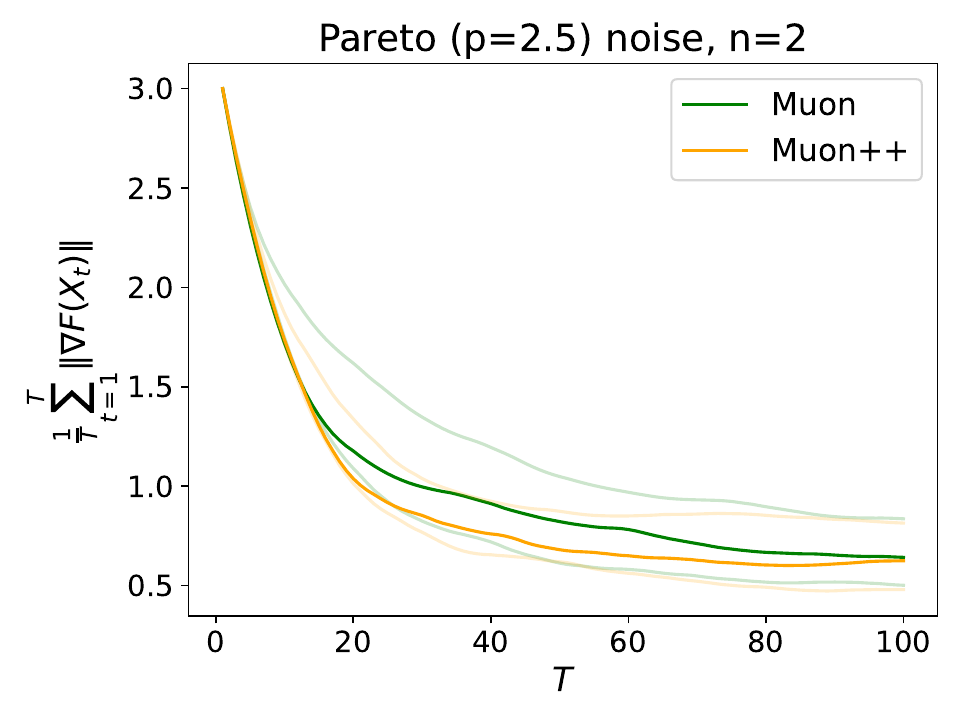}
  \end{subfigure}
  \hfill
  \begin{subfigure}[b]{0.49\textwidth}
    \includegraphics[width=\textwidth]{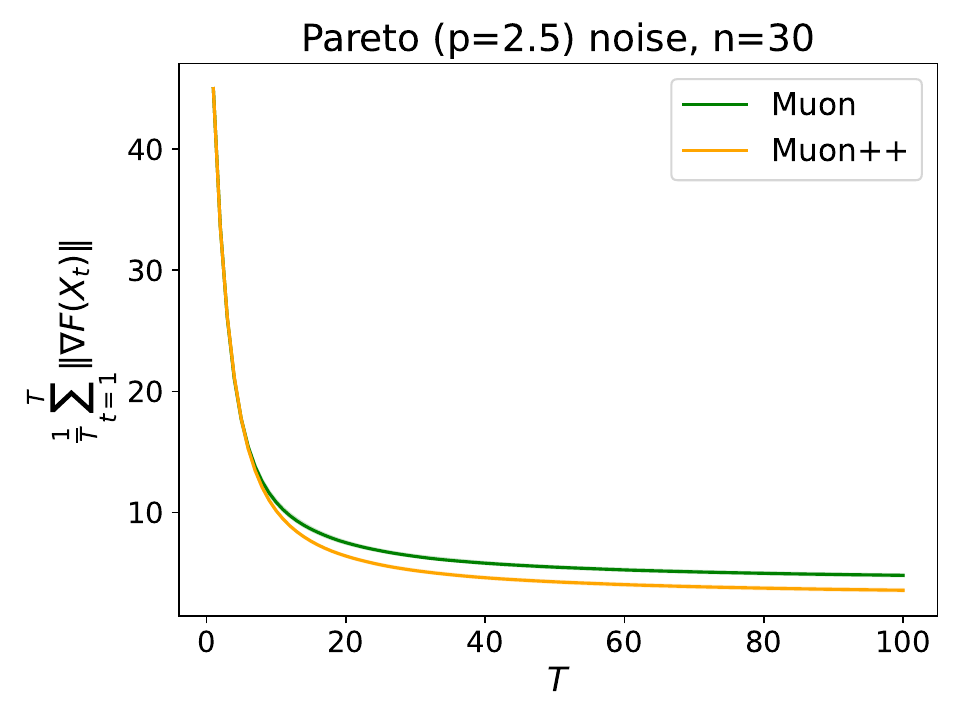}
  \end{subfigure}
  \setlength{\abovecaptionskip}{4pt}
  \caption{Pareto (p=2.5) noise.}
  \label{fig:Muon_pareto_2.5}
\end{figure}

\begin{figure}[H]
  \centering
  \begin{subfigure}[b]{0.49\textwidth}
    \includegraphics[width=\textwidth]{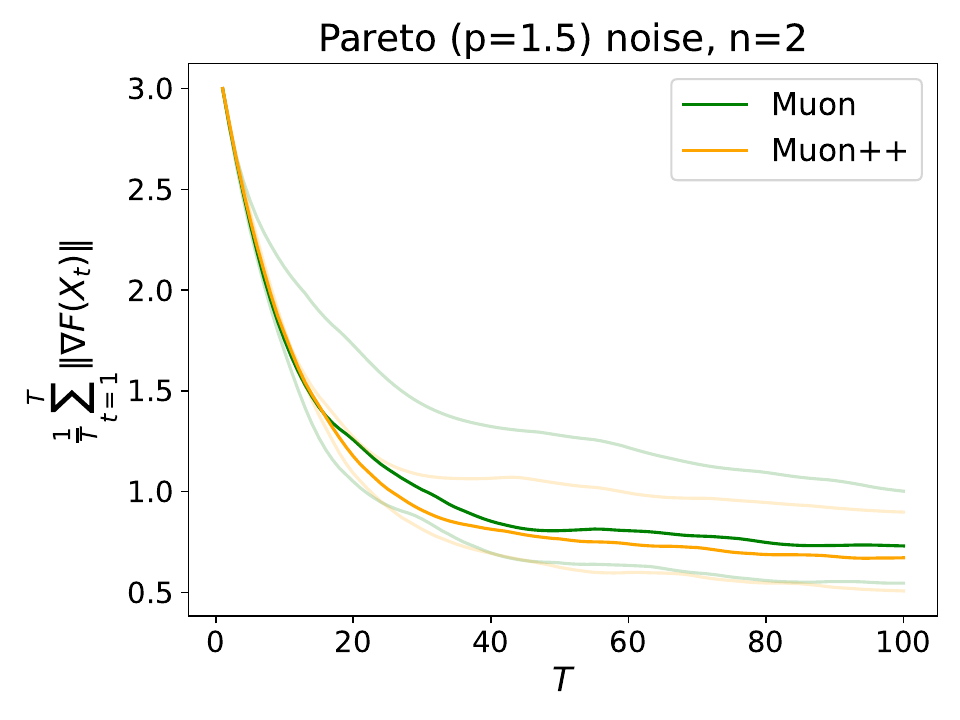}
  \end{subfigure}
  \hfill
  \begin{subfigure}[b]{0.49\textwidth}
    \includegraphics[width=\textwidth]{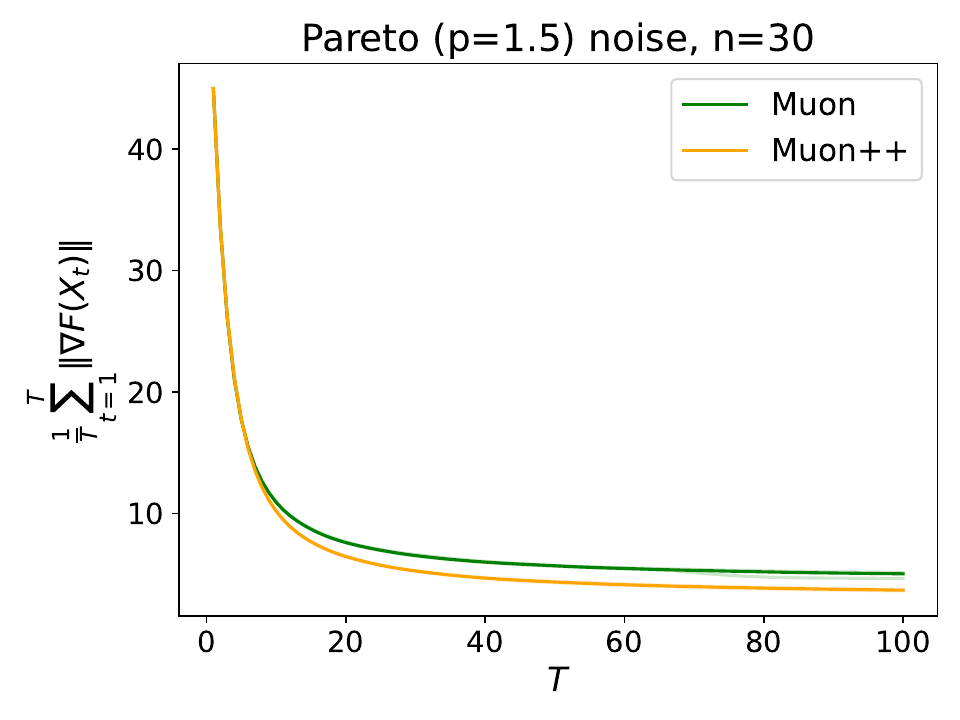}
  \end{subfigure}
  \setlength{\abovecaptionskip}{4pt}
  \caption{Pareto (p=1.5) noise.}
  \label{fig:Muon_pareto_1.5}
\end{figure}

\begin{table}[H]
\small
\centering
\begin{tabular}{l | l c c}
\noalign{\hrule height 1pt}
 & & Lion & \textsc{Lion++}  \\
\noalign{\hrule height 0.5pt}
\multirow{3}{*}{Normal noise, d=1} & Learning rate                & 1e-1  & 1e-1    \\
& Gradient clipping  & $\infty$ & 1   \\
& Weight decay                 & 1 & 1  \\
\noalign{\hrule height 0.5pt}
\multirow{3}{*}{Normal noise, d=1000} & Learning rate                & 5e-2  & 1e-1    \\
& Gradient clipping  & $\infty$ & 1   \\
& Weight decay                 & 1 & 3  \\
\noalign{\hrule height 0.5pt}
\multirow{3}{*}{Pareto (p=2.5) noise, d=1} & Learning rate                & 1e-1  & 1e-1    \\
& Gradient clipping  & $\infty$ & 1   \\
& Weight decay                 & 1 & 2  \\
\noalign{\hrule height 0.5pt}
\multirow{3}{*}{Pareto (p=2.5) noise, d=1000} & Learning rate                & 5e-2  & 1e-1    \\
& Gradient clipping  & $\infty$ & 3   \\
& Weight decay                 & 1 & 1  \\
\noalign{\hrule height 0.5pt}
\multirow{3}{*}{Pareto (p=1.5) noise, d=1} & Learning rate                & 5e-2  & 1e-1    \\
& Gradient clipping  & $\infty$ & 2   \\
& Weight decay                 & 1 & 1  \\
\noalign{\hrule height 0.5pt}
\multirow{3}{*}{Pareto (p=1.5) noise, d=1000} & Learning rate                & 5e-2  & 1e-1    \\
& Gradient clipping  & $\infty$ & 3   \\
& Weight decay                 & 1 & 1  \\
\noalign{\hrule height 1pt}
\end{tabular}
\caption{Hyperparameters selected for the synthetic function $F \left( \bx \right) = \frac{1}{2} \|\bx \|^2$.}
\label{tab:hyperparameters_synthetic_Lion}
\vspace{-5pt}
\end{table}

\begin{table}[H]
\small
\centering
\begin{tabular}{l | l c c}
\noalign{\hrule height 1pt}
 & & Muon & \textsc{Muon++}  \\
\noalign{\hrule height 0.5pt}
\multirow{3}{*}{Normal noise, n=1} & Learning rate                & 1e-1  & 1e-1    \\
& Gradient clipping  & $\infty$ & 1   \\
& Weight decay                 & 1 & 1  \\
\noalign{\hrule height 0.5pt}
\multirow{3}{*}{Normal noise, n=30} & Learning rate                & 
5e-1  & 5e-1    \\
& Gradient clipping  & $\infty$ & 1   \\
& Weight decay                 & 1 & 1  \\
\noalign{\hrule height 0.5pt}
\multirow{3}{*}{Pareto (p=2.5) noise, n=1} & Learning rate                & 1e-1  & 1e-1    \\
& Gradient clipping  & $\infty$ & 2   \\
& Weight decay                 & 1 & 1  \\
\noalign{\hrule height 0.5pt}
\multirow{3}{*}{Pareto (p=2.5) noise, n=30} & Learning rate                & 5e-1  & 5e-1    \\
& Gradient clipping  & $\infty$ & 1   \\
& Weight decay                 & 1 & 1  \\
\noalign{\hrule height 0.5pt}
\multirow{3}{*}{Pareto (p=1.5) noise, n=1} & Learning rate                & 1e-1  & 1e-1    \\
& Gradient clipping  & $\infty$ & 4   \\
& Weight decay                 & 1 & 1  \\
\noalign{\hrule height 0.5pt}
\multirow{3}{*}{Pareto (p=1.5) noise, n=30} & Learning rate                & 5e-1  & 5e-1    \\
& Gradient clipping  & $\infty$ & 1   \\
& Weight decay                 & 1 & 1  \\
\noalign{\hrule height 1pt}
\end{tabular}
\caption{Hyperparameters selected for the synthetic function $F \left( \mathbf{X} \right) = \frac{1}{2} \|\mathbf{X} \|^2$.}
\label{tab:hyperparameters_synthetic_Muon}
\end{table}

\newpage 

\subsection{Multi-class Image Classification}

In this section, we evaluate the performance of \textsc{Lion++} and \textsc{Muon++} on multi-class image classification by training a ResNet18 model \citep{he2015deepresiduallearningimage} on the CIFAR-10 \citep{krizhevsky2009learning} dataset. All experiments are conducted on one NVIDIA A100 GPU. Table~\ref{tab:hyperparameter_search_cifar10} shows the full hyperparameter search space, and the selected hyperparameters by grid search are reported in Table~\ref{tab:hyperparameters_selected_cifar10}.
\newline

\noindent \textbf{Results.} Figure~\ref{fig:cifar10} displays the test loss and test accuracy curves over 200 epochs. The results show that \textsc{Lion++} consistently outperforms Lion in terms of test loss throughout training. \textsc{Muon++} also exhibits an overall improvement in test loss compared to Muon, with the difference being particularly notable during the first half of training. In terms of test accuracy, both \textsc{Lion++} and \textsc{Muon++} exhibit overall improved test accuracy throughout training compared to their respective baselines, with \textsc{Lion++} showing a more consistent and pronounced advantage over Lion during the final epochs.

\begin{figure}[htbp]
  \centering
  \begin{subfigure}[b]{0.48\textwidth}
    \includegraphics[width=\textwidth]{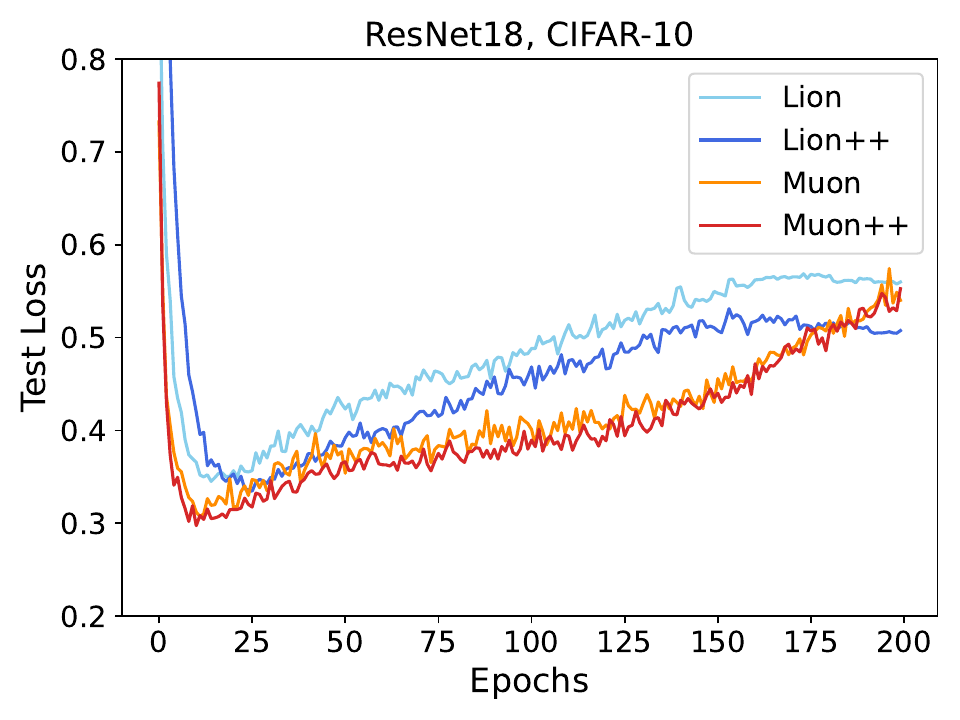}
  \end{subfigure}
  \hfill
  \begin{subfigure}[b]{0.48\textwidth}
    \includegraphics[width=\textwidth]{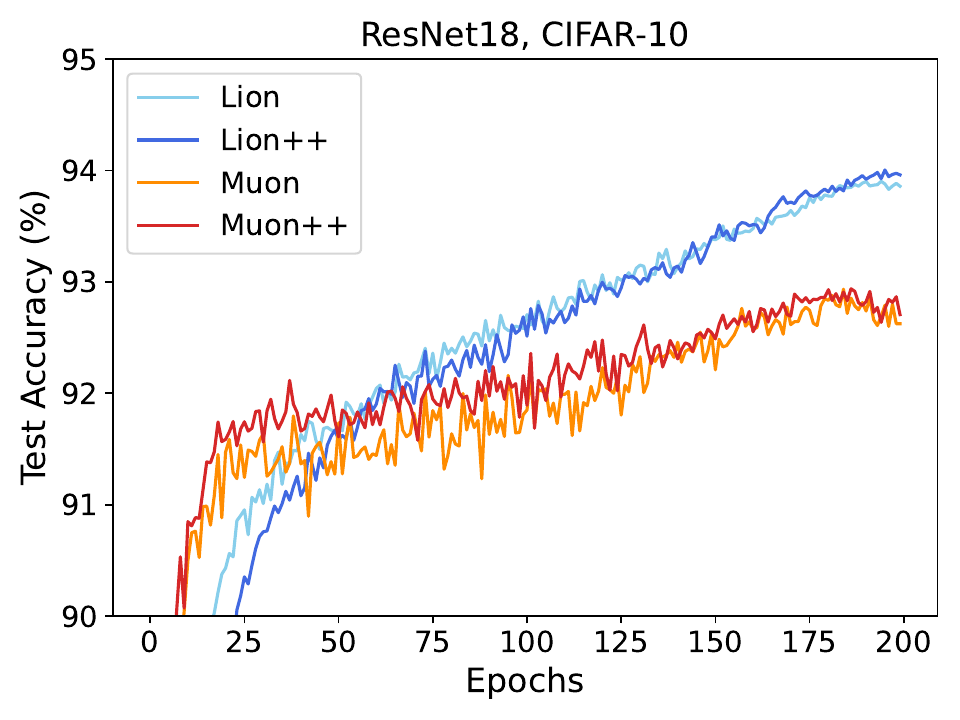}
  \end{subfigure}
  \vspace{-5pt}
  \caption{Test loss and test accuracy curves for ResNet18 training on the CIFAR-10 dataset. The results are averaged across five seed values.}
  \label{fig:cifar10}
\end{figure}

\vspace{-2mm}

\begin{table}[H]
\centering
\begin{tabular}{lc}
\noalign{\hrule height 1pt}
Learning rate                & \{1e-1, 5e-2, 1e-2, 5e-3, 1e-3, 5e-4, 1e-4, 5e-5, 1e-5\}        \\
Gradient clipping threshold  & \{1, 2, 3, 4, 5, $\infty$\}    \\
Weight decay                 & \{1e-1, 1e-2, 1e-3\}    \\
Batch size                   & 128    \\
\noalign{\hrule height 1pt}
\end{tabular}
\caption{Hyperparameter search space for ResNet18 training.}
\label{tab:hyperparameter_search_cifar10}
\end{table}

\begin{table}[H]
\centering
\begin{tabular}{l c c c c}
\noalign{\hrule height 1pt}
 & Lion & \textsc{Lion++} & Muon & \textsc{Muon++} \\
\noalign{\hrule height 0.5pt}
Learning rate                & 1e-4  & 1e-4 & 5e-2 & 5e-2   \\
Gradient clipping threshold  & $\infty$ & 5 & $\infty$ & 5  \\
Weight decay                 & 1e-2 & 1e-3 & 1e-3 & 1e-3 \\
\noalign{\hrule height 0.5pt}
$\left(\beta_1,\beta_2\right)$ & (0.9, 0.99) & (0.9, 0.99) & - & - \\
$\mu$ & - & - & 0.95 & 0.95 \\
Nesterov & - & - & No & No \\
\noalign{\hrule height 0.5pt}
Batch size                   & \multicolumn{4}{c}{128}   \\
Learning rate schedule       & \multicolumn{4}{c}{cosine} \\
\noalign{\hrule height 1pt}
\end{tabular}
\caption{Hyperparameters selected by grid search and setup for ResNet18 training.}
\label{tab:hyperparameters_selected_cifar10}
\end{table}

\end{document}